\documentclass[12pt]{amsart}
\usepackage{amsmath, amssymb, amsfonts}
\usepackage[all]{xypic}
\usepackage[pdftex]{graphicx}
\usepackage[usenames]{color}
\usepackage{stmaryrd}
\usepackage[utf8]{inputenc}
\usepackage{bbold}


\paperheight=297mm
\paperwidth=210mm
\setlength{\oddsidemargin}{0pt}
\setlength{\evensidemargin}{0pt} \setlength{\headheight}{12pt}
\setlength{\footskip}{36pt}
\setlength{\hoffset}{0in}\setlength{\voffset}{-24pt}
\setlength{\topmargin}{0pt}
\setlength{\headsep}{12pt} \setlength{\marginparwidth}{0pt}
\setlength{\marginparpush}{0pt} \setlength{\textwidth}{210mm}
\addtolength{\textwidth}{-2in} \setlength{\textheight}{297mm}
\addtolength{\textheight}{-24pt}
\addtolength{\textheight}{-2in}


\theoremstyle{plain}
\newtheorem{theorem}{Theorem}[section]
\newtheorem{lemma}[theorem]{Lemma}
\newtheorem{corollary}[theorem]{Corollary}
\newtheorem{proposition}[theorem]{Proposition}
\newtheorem{conjecture}[theorem]{Conjecture}

\theoremstyle{remark}
\newtheorem{remark}[theorem]{Remark}

\newtheorem{example}[theorem]{Example}

\newtheorem*{note*}{Note}
\newtheorem*{remark*}{Remark}
\newtheorem*{example*}{Example}

\theoremstyle{definition}
\newtheorem*{definition*}{Definition}
\newtheorem{definition}[theorem]{Definition}


\newcommand{\cl}{\mathrm{cl}}
\newcommand{\Ann}{\mathrm{Ann}}
\newcommand{\ram}{\mathrm{ram}}
\newcommand{\Gal}{\mathrm{Gal}}
\newcommand{\Hom}{\mathrm{Hom}}
\newcommand{\Aff}{\mathrm{Aff}}

\newcommand{\Aut}{\mathrm{Aut}}

\newcommand{\exend}{\hfill$\Diamond$}
\newcommand{\hyp}{\mathsf{Hyp}}
\DeclareMathOperator{\Irr}{Irr}
\DeclareMathOperator{\Quot}{Quot}
\DeclareMathOperator{\nr}{nr} 

\newcommand{\ZZ}{\mathbb{Z}}
\newcommand{\RR}{\mathbb{R}}
\newcommand{\QQ}{\mathbb{Q}}
\newcommand{\CC}{\mathbb{C}}
\newcommand{\NN}{\mathbb{N}}
\newcommand{\FF}{\mathbb{F}}

\renewcommand{\det}{\mathrm{det}}
\numberwithin{equation}{section}

\title[Conjectures of Brumer, Gross and Stark]{Conjectures of Brumer, Gross and Stark}

\author{Andreas Nickel}
\address{Universit\"{a}t Duisburg--Essen\\
	Fakult\"{a}t f\"{u}r Mathematik\\
	Thea-Leymann-Str. 9\\
	45127 Essen\\
	Germany}
\email{andreas.nickel@uni-due.de}
\urladdr{https://www.uni-due.de/$\sim$hm0251/english.html}

\subjclass[2010]{11R42, 11R29, 11R23}
\keywords{Stark conjectures, Brumer's conjecture, $L$-values, class groups,
	equivariant Tamagawa number conjecture, main conjecture, Iwasawa theory}
\date{Version of 10th October 2017}
\begin{document}
	
\begin{abstract}
We give an introduction to generalisations of conjectures of Brumer and
Stark on the annihilator of the class group of a number field. We review the 
relation to the equivariant Tamagawa number conjecture, the main conjecture 
of Iwasawa theory for totally real fields, and a conjecture of Gross on the 
behaviour of $p$-adic Artin $L$-functions at zero.
\end{abstract}

\maketitle

\section*{Introduction}

Let $L/K$ be a finite Galois extension of number fields with Galois
group $G$. To each finite set $S$ of places of $K$ containing
all the archimedean places, one can associate a so-called
`Stickelberger element' $\theta_S$ in the centre of the group algebra
$\CC[G]$. This element is constructed from values at $s=0$ of 
the $S$-truncated Artin $L$-series attached to the complex
irreducible characters of $G$. In particular, $\theta_S$ is analytic in nature.
By a result of Siegel \cite{c7-Si70} one knows that
$\theta_S$ always has rational coefficients.

Let $\mu_L$ and $\cl_L$ be the roots of unity and the class group 
of $L$, respectively. Suppose that $S$ also contains all places of $K$
which ramify in $L/K$. Then it was independently shown by
Cassou-Nogu\`es \cite{c7-CN79}, Deligne--Ribet \cite{c7-DR80}
and Barsky \cite{c7-Ba78} that for abelian $G$ one has
\[
\Ann^{}_{\ZZ[G]}(\mu^{}_L)\theta^{}_S\, \subseteq\, \ZZ[G],
\]
where $\Ann_R(M)$ denotes the annihilator ideal of $M$
regarded as a module over the ring $R$.
In other words, the coefficients of $\theta_S$ are almost integral.
Now Brumer's conjecture simply asserts that
$\Ann_{\ZZ[G]}(\mu_L)\theta_S$ annihilates $\cl_L$.
In the case $K = \QQ$ Brumer's conjecture is just Stickelberger's
theorem from the late 19th century \cite{c7-St90}.
Roughly speaking, the conjecture predicts that an analytic object 
gives constraints on the structure of an arithmetic object.
It is this kind of conjecture which is often called a
`Stark-type conjecture'. 

In fact, Harold Stark suggested the following refinement of
Brumer's conjecture. Let $w_L$ be the cardinality of $\mu_L$
and fix a fractional ideal $\mathfrak a$ in $L$.
We will denote the action of $G$ on $\mathfrak a$ 
and on its class in $\cl_L$ by exponents
on the right as usual.
Then the Brumer--Stark conjecture not only predicts that
$\mathfrak a^{w_L \theta_S}$ becomes principal, but also
gives precise information about a generator of that ideal.

In this survey article we explain recent generalisations 
of these conjectures to arbitrary, not necessarily abelian
Galois extensions; these are due to the author \cite{c7-Ni11}
and, independently and in even greater generality, due to
Burns \cite{c7-Bu11}. A further, slightly different approach has been 
developed by Dejou and Roblot \cite{c7-DR14}.

We discuss the relation to further conjectures in the field
such as the equivariant Tamagawa number conjecture of
Burns and Flach \cite{c7-BF01} and the main conjecture
of equivariant Iwasawa theory which (under a suitable condition)
has been proven by Ritter--Weiss \cite{c7-RW11} and, independently,
by Kakde \cite{c7-Ka13}. A conjecture of Gross \cite{c7-Gr81}
on the behaviour of $p$-adic Artin $L$-series at $s=0$ also
plays a pivotal role. 

Roughly speaking, the latter conjecture
asserts that (i) the order of vanishing at $s=0$ of the $p$-adic
$L$-series coincides
with the order of vanishing at $s=0$ of a corresponding complex
$L$-series and (ii) the special values of the $p$-adic and the 
complex $L$-series at $s=0$ coincide up to some explicit $p$-adic regulator.
Considerable progress has been made in the recent years by
Spiess \cite{c7-Sp14} and Burns \cite{c7-Bu17} on part (i),
and by Dasgupta, Kakde and Ventullo \cite{c7-DKV17} on part (ii).
We also explain their results and the relation to the
equivariant Tamagawa number conjecture due to Burns \cite{c7-Bu17} 
as well as the relation to the (non-abelian)
Brumer--Stark conjecture due to Johnston and the author \cite{c7-JN17b}.

We provide no proofs unless they are short and we feel that it might help
for a better understanding. Instead we include some examples
to illustrate occurring obstacles and the underlying ideas
how to overcome them.

\subsection*{Notation and conventions}
All rings are assumed to have an identity element and all modules are assumed
to be left modules unless otherwise  stated.
Unadorned tensor products will always denote tensor 
products over $\ZZ$.
We fix the following notation:

\medskip

\begin{tabular}{ll}
	$R^{\times}$ & the group of units of a ring $R$\\
	$\zeta(R)$ & the centre of a ring $R$\\
	$\Ann_{R}(M)$ & the annihilator ideal of the $R$-module $M$\\
	$M_{m \times n} (R)$ & the set of all $m \times n$ matrices with entries in a ring $R$\\
	$K_{\infty}$ & the cyclotomic $\ZZ_{p}$-extension of the number field $K$\\
	$\mu_{K}$ & the roots of unity of a field $K$\\
	$\cl_{K}$ & the class group of a number field $K$ \\
	$K^{c}$ & an algebraic closure of a field $K$ \\
	$\Irr_{F}(G)$ & the set of $F$-irreducible characters of the (pro)-finite group $G$\\
	& (with open kernel) where $F$ is a field of characteristic $0$ 
\end{tabular}

\section{Preliminaries}

\subsection{Ray class groups} 

Let $L/K$ be a finite Galois extension of number fields with Galois group $G$.
For each place $v$ of $K$ we fix a place $w$ of $L$ above $v$ and write $G_{w}$ and $I_{w}$ for the decomposition
group and inertia subgroup of $L/K$ at $w$, respectively.
When $w$ is a finite place, we  choose a lift $\phi_{w} \in G_{w}$ of the Frobenius automorphism at $w$;
moreover, we write $\mathfrak{P}_{w}$ for the associated prime ideal in $L$ and $| \cdot |_w$ for the
corresponding absolute value. We denote the cardinality 
of the residue field of $K$ at $v$ by $N(v)$.

For any set $S$ of places of $K$, we write $S(L)$ for the set of places of $L$ which lie above those in $S$.
Now let $S$ be a finite set of places of $K$ containing the set $S_{\infty}=S_{\infty}(K)$ of archimedean places
and let $T$ be a second finite set of places of $K$ such that $S \cap T = \emptyset$.
We write $\cl_{L}^{T}$ for the ray class group 
\index{class group!ray class group}
of $L$ associated to the ray
$\mathfrak{M}_{L}^{T} := \prod_{w \in T(L)} \mathfrak{P}_w$ and $\mathcal{O}_{L,S}$ for the ring of $S(L)$-integers in $L$.
Let $\mathcal{O}_{L} := \mathcal{O}_{L, S_{\infty}}$ be the ring of integers in $L$.
Let $S_{f}$ be the set of all finite places in $S$;
then there is a natural map $\ZZ [S_{f}(L)] \to \cl_{L}^{T}$ which sends each place $w \in S_{f}(L)$
to the corresponding class $[\mathfrak{P}_w] \in \cl_{L}^{T}$. We denote the cokernel of this map by $\cl_{L,S}^{T}$.
When $T$ is empty, we abbreviate $\cl_{L,S}^{\emptyset}$ to $\cl_{L,S}$
so that in particular $\cl_{L, \emptyset} = \cl_L$ is the usual class
group of $L$.
Moreover, we denote the $S(L)$-units of $L$ by $E_{L,S}$ and define
$E_{L,S}^T := \left\{x \in E_{L,S}: x \equiv 1 \bmod \mathfrak{M}_{L}^{T} \right\}$.
All these modules are equipped with a natural $G$-action and we have an
exact sequence of $\ZZ[G]$-modules
\[
0 \longrightarrow E_{L,S}^T \longrightarrow E_{L,S} \longrightarrow (\mathcal{O}_{L,S} / \mathfrak{M}_{L}^{T})^{\times}
\stackrel{\nu}{\longrightarrow} \cl_{L,S}^{T} \longrightarrow \cl_{L,S} \longrightarrow 0,
\]
where the map $\nu$ lifts an element $\overline x \in (\mathcal{O}_{L,S} / \mathfrak{M}_{L}^{T})^{\times}$ to
$x \in \mathcal{O}_{L,S}$ and
sends it to the ideal class $[(x)] \in \cl_{L,S}^{T}$ of the principal ideal $(x)$.

\subsection{Equivariant Artin $L$-values} 

Let $S$ be a finite set of places of $K$ containing $S_{\infty}$.
Let $\Irr_{\CC}(G)$ denote the set of complex irreducible characters of $G$.
For $\chi \in \Irr_{\CC}(G)$, let $V_{\chi}$ be a left $\CC[G]$-module with character $\chi$.
We write $L_{S}(s,\chi)$ for the $S$-truncated Artin $L$-function 
\index{Artin $L$-function!complex}
attached to $\chi$ which for $\mathrm{Re}(s) > 1$ is given by
the Euler product
\[
L^{}_S(s,\chi)\, = \prod_{v \not\in S} \det \left(1 - N(v)^{-s} \phi^{}_w \mid
V_{\chi}^{I_w}\right)^{-1}.
\]
Each element in $\CC[G]$ may be viewed as a complex valued function on $G$.
The irreducible characters constitute a basis of the centre and we thus have
a canonical isomorphism
$\zeta(\CC[G]) \simeq \prod_{\chi \in \Irr_{\CC} (G)} \CC$.
We define the equivariant $S$-truncated Artin $L$-function 
\index{Artin $L$-function!equivariant}
to be the meromorphic $\zeta(\CC[G])$-valued function
\[
L^{}_{S}(s)\, :=\, \left(L^{}_{S}(s,\chi)\right)^{}_{\chi \in \Irr_{\CC} (G)}.
\]
If $T$ is a second finite set of places of $K$ such that $S \cap T = \emptyset$, we define
\[
\delta^{}_{T}(s,\chi)\, = \prod_{v \in T} \det\left( 1 - N(v)^{1-s} \phi_{w}^{-1} \mid V_{\chi}^{I_{w}}\right)
\]
and
\[
\delta^{}_{T}(s)\, :=\, \left(\delta^{}_{T}(s,\chi)\right)^{}_{\chi\in \Irr^{}_{\CC} (G)}.
\]
We set
\[
\Theta^{}_{S,T}(s)\, :=\, \delta^{}_{T}(s) \cdot L^{}_{S}(s)^{\sharp},
\]
where $^{\sharp}: \CC[G] \to \CC[G]$ denotes the anti-involution induced by $g \mapsto g^{-1}$ for $g \in G$.
Note that $L_{S}(s)^{\sharp} = (L_{S}(s,\check{\chi}))_{\chi \in \Irr_{\CC} (G)}$ 
where $\check \chi$ denotes the character contragredient to $\chi$.
The functions $\Theta_{S,T}(s)$ are the so-called $(S,T)$-modified
$G$-equi\-variant 
$L$-functions, and we define Stickelberger elements
\index{Stickelberger element}
\[
\theta_{S}^{T}(L/K)\, =\, \theta_{S}^{T}\, :=\, \Theta^{}_{S,T}(0) \in \zeta(\QQ[G]).
\]
Note that a priori we only have $\theta_{S}^{T} \in \zeta(\CC[G])$, but by a result of Siegel \cite{c7-Si70} we know that $\theta_{S}^{T}$
has rational coefficients.
If $T$ is empty, we abbreviate $\theta_{S}^{T}$ to $\theta_{S}$.

\begin{remark} \label{c7-rem:vanishing}
	Let $\chi \in \Irr^{}_{\CC}(G)$ and let $r^{}_S(\chi)$ be the order of vanishing 
	of $L^{}_{S}(s,\chi)$ at $s=0$.\index{order of vanishing}
	Then by \cite[Chapitre I, Proposition 3.4]{c7-Ta84} one has
	\begin{equation} \label{c7-eqn:order-of-vanishing}
	r^{}_S(\chi)\, =\, \bigg(\sum_{v \in S} \dim^{}_{\CC}(V_{\chi}^{G_w}) \bigg) - 
	\dim^{}_{\CC}(V_{\chi}^G).
	\end{equation}
	Thus if either 
	$\chi$ is non-trivial and $S$ contains an (infinite) place $v$ such that $V_{\chi}^{G_{w}} \neq 0$
	or $\chi$ is trivial and $|S| > 1$ then the $\chi$-part of $\theta^{T}_{S}$ vanishes.
	Now suppose that $S$ contains all ramified primes.
	Then if  $\theta_{S}^{T}$ is non-trivial, precisely one of the following possibilities occurs:
	(i) $K$ is totally real and $L$ is totally complex,
	(ii) $K$ is an imaginary quadratic field,  $L/K$ is unramified and $S = S_{\infty}$ or 
	(iii) $L=K=\QQ$ and $S = S_{\infty}$. 
\end{remark}

\section{The abelian case}

In this section we assume that the extension $L/K$ is \emph{abelian}.
Let $\mu_L$ denote the roots of unity in $L$. It was independently shown in
\cite{c7-Ba78, c7-CN79, c7-DR80}
that one has
\begin{equation} \label{c7-eqn:integrality}
\Ann^{}_{\ZZ[G]}(\mu^{}_L) \theta^{}_S\, \subseteq\, \ZZ[G]
\end{equation}
whenever $S$ contains the set $S_{\ram}$ of 
all places of $K$ that ramify in $L/K$.
We now state Brumer's conjecture as discussed by Tate \cite{c7-Ta84}.
\begin{conjecture}[$B(L/K,S)$] \label{c7-conj:abelian-Brumer}
	\index{Brumer's conjecture!abelian} \index{class group}
	Let $L/K$ be an abelian extension of number fields 
	with Galois group $G$ and 
	let $S$ be a finite set
	of places of $K$ containing the set $S_{\infty}$ and all places of $K$ that ramify in $L/K$.
	Then one has
	\[
	\Ann^{}_{\ZZ[G]}(\mu^{}_L) \theta^{}_S\, \subseteq\, \Ann^{}_{\ZZ[G]}(\cl^{}_L).
	\]
\end{conjecture}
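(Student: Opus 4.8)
The plan is to push the conjecture up the cyclotomic tower, where the Stickelberger element becomes the special value of a $p$-adic $L$-function and the class group is replaced by a more flexible Iwasawa module, prove the annihilation there via the equivariant main conjecture, and then descend. First I would dispose of the degenerate cases: by Remark \ref{c7-rem:vanishing}, once $S \supseteq S_{\ram}$ and $\theta_S \neq 0$, either $L = K = \QQ$ with $S = S_\infty$ (where the claim is Stickelberger's theorem \cite{c7-St90}), or $K$ is imaginary quadratic with $L/K$ unramified and $S = S_\infty$ (a case amenable to direct analysis), or $K$ is totally real and $L$ is totally complex. In the last case one may enlarge $L$ by a root of unity to a CM field, observe that the $L$-values are supported on the minus part $\ZZ[G]^- = \ZZ[G]/(1+j)$ with $j$ complex conjugation, and argue one prime $p$ at a time; so it suffices to prove $\Ann_{\ZZ_p[G]}(\mu_L \otimes \ZZ_p)\,\theta_S \subseteq \Ann_{\ZZ_p[G]}(\cl_L \otimes \ZZ_p)$ for each $p$. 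I would then invoke the standard device of an auxiliary finite set $T$ disjoint from $S$: for suitable $T$ the module $E_{L,S}^T$ is torsionfree, $\theta_S^T$ is integral outright, and $\Ann_{\ZZ[G]}(\mu_L)$ is absorbed into the $\delta_T$-factor, so that via the exact sequence of Section~1 relating $\cl_{L,S}^T$, $\cl_{L,S}$ and $\cl_L$ the conjecture reduces to showing $\theta_S^T \in \Ann_{\ZZ_p[G]}(\cl_{L,S}^{T} \otimes \ZZ_p)$.

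Next I would strengthen this to a Fitting-ideal statement on Pontryagin duals, $\theta_S^T \in \mathrm{Fitt}_{\ZZ_p[G]}\big((\cl_{L,S}^{T} \otimes \ZZ_p)^{\vee}\big)$, since Fitting ideals (unlike annihilators) are compatible with base change and with passage to coinvariants, which is what the descent needs. Now let $L_\infty = L K_\infty$ be the cyclotomic $\ZZ_p$-extension, $\mathcal G = \Gal(L_\infty/K)$, $\Gamma = \Gal(L_\infty/L)$, and let $X$ be the inverse limit over the finite layers of the $p$-parts of the $(S,T)$-ray class groups, a finitely generated torsion module over the Iwasawa algebra $\ZZ_p[[\mathcal G]]$. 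The main conjecture of equivariant Iwasawa theory for the totally real field $K$, proven by Ritter--Weiss \cite{c7-RW11} and, independently, by Kakde \cite{c7-Ka13} under the hypothesis that Iwasawa's $\mu$-invariant vanishes, identifies a natural characteristic (Fitting) invariant of $X$ with the ideal generated by the $\mathcal G$-equivariant $p$-adic $L$-function $\mathcal L_{S,T}$ whose interpolation property recovers the Stickelberger elements $\theta_S^T$ at finite layers. At the level of the Iwasawa module one therefore has exactly the desired annihilation.

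The final, and hardest, step is descent from $\ZZ_p[[\mathcal G]]$ to $\ZZ_p[G]$ along the augmentation in the $\Gamma$-direction: one must compare $\mathrm{Fitt}$ of the coinvariants $X_\Gamma$ with $\mathrm{Fitt}$ of the finite-level dual class group, and track what the specialisation does to $\mathcal L_{S,T}$. This is precisely where the behaviour of the $p$-adic $L$-function at $s = 0$ intervenes: the $p$-adic places of $K$ that split in $L/K$ contribute \emph{trivial zeros}, and the discrepancy between the algebraic and analytic specialisations is governed by Gross's conjecture \cite{c7-Gr81} — its order-of-vanishing part (i), due to Spiess \cite{c7-Sp14} and Burns \cite{c7-Bu17}, and its $p$-adic-regulator part (ii), the Gross--Stark conjecture, due to Dasgupta--Kakde--Ventullo \cite{c7-DKV17}. (An alternative packaging of the same input is via the equivariant Tamagawa number conjecture of Burns--Flach \cite{c7-BF01}, which implies Brumer's conjecture and is itself accessible from the equivariant main conjecture.) I expect this descent — equivalently, the trivial-zero phenomenon and Gross's conjecture — to be the main obstacle, alongside the unconditional vanishing of $\mu$. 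Assembling the equivariant main conjecture with the results on Gross's conjecture is what currently yields Brumer's conjecture for the minus part away from $p = 2$ under the $\mu = 0$ hypothesis; the general statement, in particular at $p = 2$ and without the $\mu$-assumption, remains open.
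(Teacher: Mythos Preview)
The statement you were asked to prove is labelled a \emph{conjecture}, and the paper does not prove it: Brumer's conjecture $B(L/K,S)$ remains open in general. There is therefore no ``paper's own proof'' to compare your attempt against. What the paper does supply is a discussion of conditional and partial results (see \S\ref{c7-sec:results}, especially Theorems \ref{c7-thm:ETNC-implies-BS}, \ref{c7-thm:Brumer-Stark} and the surrounding remarks), and your outline is essentially a faithful narrative of that same strategy --- reduce to the CM minus part and work prime by prime, introduce the auxiliary set $T$ to make $\theta_S^T$ integral, ascend the cyclotomic $\ZZ_p$-tower and invoke the EIMC of Ritter--Weiss and Kakde, then descend using Gross's conjecture to control trivial zeroes. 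You correctly flag the two genuine obstructions (vanishing of $\mu$, and the prime $2$) and end by conceding that the general statement is open.

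So your write-up is not a proof of the conjecture, nor does it pretend to be; it is a well-informed road map of the known approach, and it matches the paper's own account of what is provable and why. One small imprecision: in the reduction to the CM case you say one ``may enlarge $L$ by a root of unity to a CM field'', but enlarging $L$ changes $G$ and one must check the conjecture descends; the paper instead cites \cite[Prop.~6.4]{c7-GP15} for the reduction from ``$L$ totally complex'' to ``$L$ CM'', which is a more delicate argument than adjoining roots of unity.
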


We will discuss relevant results on Brumer's conjecture 
and its generalisations in \S \ref{c7-sec:results} below. 
Here we only mention that
for absolutely abelian number fields Brumer's conjecture
follows from Stickelberger's theorem from the late 19th century.

\begin{theorem}[Stickelberger \cite{c7-St90}]
	\index{Stickelberger's theorem}
	Assume that $L$ is abelian over\/ $\QQ$. Then, Brumer's conjecture\/ 
	$B(L/K,S)$ holds.
\end{theorem}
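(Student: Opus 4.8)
The plan is to reduce to the case $K=\QQ$, which is precisely Stickelberger's theorem as recalled above: for every abelian extension $L/\QQ$ and every admissible $S$ one has $\Ann_{\ZZ[\Gal(L/\QQ)]}(\mu_L)\,\theta_S(L/\QQ)\subseteq\Ann(\cl_L)$ --- classically, by the prime factorisation of Gauss sums, after embedding $L$ in a cyclotomic field via Kronecker--Weber. So for an intermediate field $\QQ\subseteq K\subseteq L$ the only real content is to pass from $B(L/\QQ,-)$ to $B(L/K,-)$, i.e.\ to establish the functorial behaviour of Brumer's conjecture under enlargement of the ground field. The conceptual difficulty is that $G:=\Gal(L/K)$ sits in $\mathcal G:=\Gal(L/\QQ)$ as a \emph{subgroup}, not as a quotient, so --- unlike in the case of passing to a quotient of the Galois group --- there is no projection of group rings available.

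The bridge between the two Stickelberger elements is Artin's inductivity of $L$-functions. Writing $\Gamma:=\Gal(K/\QQ)=\mathcal G/G$ and choosing, for the given $S$, a suitable admissible set $S'$ of places of $\QQ$ (built from the rational primes below $S$, and enlarged so that it is admissible for $L/\QQ$ and the inductivity identities hold without spurious Euler factors), one has, for each $\chi\in\Irr_{\CC}(G)$, the identity $L_S(s,\chi)=\prod_{\psi}L_{S'}(s,\psi)$, the product being over the $\psi\in\Irr_{\CC}(\mathcal G)$ with $\psi|_G=\chi$. Evaluating at $s=0$ and regrouping gives
\[
\theta_S(L/K)\;=\;\prod_{\eta\in\Irr_{\CC}(\Gamma)}{}^{\eta}\theta_{S'}(L/\QQ)
\qquad\text{in }\CC[\mathcal G],
\]
where ${}^{\eta}(-)$ is the twist of the $L$-values by the character $\eta$ (inflated to $\mathcal G$) and the factor for the trivial $\eta$ is $\theta_{S'}(L/\QQ)$ itself. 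Now take $\alpha\in\Ann_{\ZZ[G]}(\mu_L)\subseteq\Ann_{\ZZ[\mathcal G]}(\mu_L)$. The product $\alpha\,\theta_S(L/K)$ is integral by \eqref{c7-eqn:integrality}, and one wants it to lie in the $\ZZ[\mathcal G]$-ideal generated by the elements $\beta\,\theta_{S'}(L/\QQ)$ with $\beta\in\Ann_{\ZZ[\mathcal G]}(\mu_L)$. By the case $K=\QQ$, applied to $L/\QQ$ with the admissible set $S'$, each such element annihilates $\cl_L$; hence so does $\alpha\,\theta_S(L/K)$, and since it also lies in $\ZZ[G]$ this is exactly $B(L/K,S)$.

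The main obstacle is this final inclusion. The twist factors ${}^{\eta}\theta_{S'}(L/\QQ)$ with $\eta\neq1$ are not themselves integral, so one cannot conclude formally --- an element of $\Ann_{\ZZ[\mathcal G]}(\cl_L)$ multiplied by a non-integral element of $\CC[\mathcal G]$ need not annihilate $\cl_L$, even when the product is integral. One has to place $\alpha\,\theta_S(L/K)$ inside the ideal by hand, using the precise structure of the Stickelberger elements and ultimately the arithmetic of $L$; this, together with the comparison of the sets $S$ and $S'$, is exactly the functoriality of Brumer's conjecture under base change of the ground field, for which I would refer to Tate's book \cite{c7-Ta84}. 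The auxiliary points --- in particular that enlarging an admissible set only multiplies $\theta_S$ by an element of the integral group ring (an Euler factor $\det(1-\phi_w\mid V_\chi^{I_w})$ at $s=0$), and the book-keeping needed to choose $S'$ --- are routine.
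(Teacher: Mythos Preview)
The paper does not prove this theorem; it is stated as a classical result with a citation to Stickelberger's 1890 paper, so there is no argument in the paper to compare yours against.

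Your strategy---reduce to the case $K=\QQ$---is the natural one, and the product formula $\theta_S(L/K)=\prod_{\eta}{}^{\eta}\theta_{S'}(L/\QQ)$ (a consequence of inductivity of Artin $L$-functions in the abelian situation) is correct, at least when $S$ consists of all places of $K$ above a suitable set $S'$ of rational primes. But after correctly identifying the obstacle---the twisted factors ${}^{\eta}\theta_{S'}(L/\QQ)$ with $\eta\neq 1$ are not integral, so one cannot simply peel off the untwisted factor and invoke $B(L/\QQ,S')$---you do not resolve it. Writing ``I would refer to Tate's book'' is not a proof: the implication $B(L/\QQ,\cdot)\Rightarrow B(L/K,\cdot)$ for intermediate $K$ is precisely the nontrivial content beyond the cyclotomic case, and you have not supplied an argument for it. Your own remark that this step needs ``the precise structure of the Stickelberger elements and ultimately the arithmetic of $L$'' is telling: it concedes that the route you have sketched does not formally reduce the problem, and that one must in effect carry out an annihilation argument for $L/K$ directly rather than deduce it from the $\QQ$-case via your product decomposition.

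So what you have written is a clear diagnosis of why the reduction is not formal, followed by a pointer to the literature, but not a proof that the reduction nevertheless goes through. If you want to complete the argument along these lines, it is cleaner to work with the $(S,T)$-modified elements $\theta_S^T$ (as in Corollary~\ref{c7-cor:Brumer-re}) and the explicit generators $1-N(v)\phi_w^{-1}$ of $\Ann(\mu_L)$ from Lemma~\ref{c7-lem:Coates}, rather than with arbitrary $\alpha\in\Ann(\mu_L)$ and the raw product formula; the compatibility you need is then an identity between integral group-ring elements, not a divisibility in $\CC[\mathcal G]$.
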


\begin{remark}
	Consider the three cases of Remark \ref{c7-rem:vanishing}.
	In case (iii), Brumer's conjecture (and also the 
	Brumer--Stark conjecture below) is trivial. 
	Case (ii) follows from the fact that each ideal of $L$
	becomes principal in the Hilbert class field of $L$ 
	(see \cite[Remark 6.3]{c7-GP15}).
	Finally, case (i) can  be reduced to the case that $L$ is a CM-field 
	(see \cite[Proposition 6.4]{c7-GP15}). \exend
\end{remark}

Therefore, we shall henceforth assume that  $L/K$ is a CM-extension, 
\index{CM-extension}
that is, $L$ is a CM-field, $K$ is totally real and complex conjugation induces a unique automorphism
$j$ in $G$.

Let $w^{}_L$ be the cardinality of $\mu^{}_L$. Then clearly
$w^{}_L \in \Ann^{}_{\ZZ[G]}(\mu^{}_L)$ and so Brumer's conjecture asserts
that $\mathfrak{a}^{w^{}_L \theta^{}_S}$
is a principal ideal for every fractional ideal $\mathfrak{a}$ in $L$.
The Brumer--Stark conjecture now gives precise information
on a generator of this ideal. To make this precise,
we need the following definition.

\begin{definition}
	Let $L$ be a CM-field. Then, $\epsilon \in L^{\times}$
	is called an \emph{anti-unit} \index{anti-unit}
	if $\epsilon^{1+j} = 1$.
\end{definition}

\begin{conjecture}[$BS(L/K,S)$] \label{c7-conj:abelian-BS}
	\index{Brumer--Stark conjecture!abelian}
	Let $L/K$ be an abelian CM-extension with Galois group\/ $G$
	and let\/ $S$ be a finite set of places of\/ $K$ containing the set\/ 
	$S_{\infty}$ and all places of\/ $K$ that ramify in\/ $L/K$.
	Then, for every non-zero fractional ideal\/ $\mathfrak{a}$ in\/ $L$, there is an
	anti-unit\/ $\epsilon \in L^{\times}$ such that
	\begin{enumerate} \itemsep4pt
		\item 
		$\mathfrak{a}^{w_L \theta_S} = (\epsilon)$.
		\item
		The extension\/ $L(\sqrt[w_L]{\epsilon})/K$ is abelian.
	\end{enumerate}
\end{conjecture}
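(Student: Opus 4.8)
The plan is to deduce $BS(L/K,S)$ from the equivariant Tamagawa number conjecture of Burns--Flach \cite{c7-BF01} for the pair attached to $h^{0}(\mathrm{Spec}\,L)$, and in turn to obtain the relevant part of that conjecture from the equivariant main conjecture of Iwasawa theory \cite{c7-RW11, c7-Ka13} together with Gross's conjecture \cite{c7-Gr81}. Since $\cl_{L}$ and $\mu_{L}$ are finite it suffices to argue one prime $p$ at a time; and by the reformulation due to Tate the full conjecture $BS(L/K,S)$ --- including the abelianness assertion (2) --- is equivalent to the family of $T$-modified annihilation statements $\Ann_{\ZZ_{p}[G]}(\mu_{L})\theta_{S}^{T} \subseteq \Ann_{\ZZ_{p}[G]}(\cl_{L}^{T}\otimes\ZZ_{p})$, one for each finite set $T$ of places of $K$ disjoint from $S$ (varying $T$ encodes, via class field theory, the Kummer-theoretic condition (2)). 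So the goal is reduced to proving each such $T$-modified membership.

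Next I would pass to a cohomological formulation. Choosing $T$ large enough that $E_{L,S}^{T}$ is $\ZZ$-torsion-free makes $\theta_{S}^{T}$ integral outright, and the five-term exact sequence relating $E_{L,S}^{T}$ to $\cl_{L,S}^{T}$ shows that, after tensoring with $\ZZ_{p}$, the relevant arithmetic is computed by a perfect complex $C^{\bullet}$ of $\ZZ_{p}[G]$-modules whose cohomology in the two pertinent consecutive degrees is $E_{L,S}^{T}\otimes\ZZ_{p}$ and a module closely related to $\cl_{L,S}^{T}\otimes\ZZ_{p}$. Because $L/K$ is a CM-extension and, for odd $p$, the element $\theta_{S}^{T}$ lies in the minus part $e^{-}\ZZ_{p}[G]$ with $e^{-}=\tfrac{1}{2}(1-j)$, the order of vanishing $r_{S}(\chi)$ of Remark \ref{c7-rem:vanishing} is zero for every character that contributes; hence the complex $L$-values involved are genuine numbers, the canonical trivialisation of the determinant of $C^{\bullet}$ is given by the single element $\theta_{S}^{T}$, and the standard Fitting-ideal manipulations of \cite{c7-Ni11, c7-Bu11} convert the resulting instance of the ETNC into the desired annihilation (and, along the way, into the integrality \eqref{c7-eqn:integrality} over $\ZZ_{p}$).

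To supply that instance of the ETNC I would carry out an Iwasawa descent along the cyclotomic $\ZZ_{p}$-extension $K_{\infty}/K$. The equivariant main conjecture of Ritter--Weiss and of Kakde identifies the characteristic element of the associated Iwasawa module with the $p$-adic $L$-function, granting the vanishing of the relevant Iwasawa $\mu$-invariant. Descending to the finite level produces a statement about the $p$-adic $L$-value at $s=0$, which is matched with $\theta_{S}^{T}$ only via Gross's conjecture: part (i) forces the orders of vanishing at $s=0$ of the $p$-adic and the complex $L$-series to agree, so that the descent is exact and acquires no spurious zeros, while part (ii) identifies the two leading terms up to a $p$-adic regulator that on the minus part is elementary. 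Thus the progress of Spiess \cite{c7-Sp14} and Burns \cite{c7-Bu17} on (i) and of Dasgupta--Kakde--Ventullo \cite{c7-DKV17} on (ii) is exactly what the descent consumes, and one obtains the $T$-modified Brumer--Stark statement away from $p=2$.

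The principal obstacle is the prime $2$: the splitting into $\pm$-parts, the equivariant main conjecture, and several of the determinant computations are available only for odd $p$, and the $2$-primary part of $\cl_{L}$ genuinely resists this strategy and must be handled by separate arguments (or conceded as a hypothesis). Two further indispensable inputs are the conjectural vanishing of the Iwasawa $\mu$-invariant, without which the main-conjecture step is unavailable, and part (ii) of Gross's conjecture, which was the historical bottleneck until it was established by Dasgupta--Kakde--Ventullo.
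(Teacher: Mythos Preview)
The statement you are attempting to prove is a \emph{conjecture}; the paper does not prove it, and at the time of writing it remained open in general. So there is no ``paper's own proof'' to compare against. Your proposal is not a proof either: as you yourself note in the final paragraph, the argument consumes the vanishing of the Iwasawa $\mu$-invariant (open for general totally real base fields $K$) and breaks down entirely at $p=2$. A conditional reduction that leaves open hypotheses is not a proof of the conjecture.

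That said, the strategy you sketch is exactly the conditional approach the paper surveys in \S\ref{c7-sec:results}: reduce to $SBS(L/K,S,p)$, deduce this from the minus $p$-part of the ETNC (Theorem~\ref{c7-thm:ETNC-implies-BS}), and obtain the ETNC by Iwasawa descent from the main conjecture plus Gross (Theorems~\ref{c7-thm:Brumer-Stark} and the theorem following it). In the abelian situation the main conjecture is Wiles' theorem and Conjecture~\ref{c7-conj:int-property} is automatic, so your inputs are slightly better than you indicate; but the descent to arbitrary $S$ (i.e.\ removing the condition $S_p\subseteq S$) still requires Gross' Conjecture~\ref{c7-conj:Gross-I} on orders of vanishing, which even for linear characters is only known when $r_S(\psi)\le 1$, together with a $\mu=0$ hypothesis. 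Your claimed equivalence between $BS(L/K,S)$ and the annihilation of $\cl_L^T$ by $\theta_S^T$ is also not quite what the paper records: Proposition~\ref{c7-prop:abelian-BS-re} gives the correct reformulation, while annihilation of $\cl_L^T$ is the \emph{strong} Brumer--Stark property $SBS$, which implies $BS$ (Proposition~\ref{c7-prop:SBS-BS}) and is equivalent to it only for odd $p$ by Nomura's result.
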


Here, the first part almost follows from $B(L/K,S)$, and Harold Stark originally suggested that the second condition might hold. 
The conjecture was first stated in published form by Tate \cite{c7-Ta84}.

\begin{remark}
	An anti-unit $\epsilon \in L^{\times}$ that fulfils (1)
	is determined up to a root of unity. \exend
\end{remark}

In order to generalise these conjectures to arbitrary Galois CM-extensions,
we need reformulations of both conjectures. For this, we introduce
the following terminology.
Let $S$ and $T$ be two finite sets of places of $K$.
We then say that $\hyp (S,T)$ is satisfied if the following holds:
\begin{itemize}
	\item
	$S$ contains all the archimedean places of $K$ and all places 
	which ramify in $L/K$,
	i.e.~$S \supseteq S_{\ram} \cup S_{\infty}$.
	\item
	$S \cap T = \emptyset$.
	\item
	$E_{L,S}^T$ is torsionfree.
\end{itemize}

\begin{remark}
	Note that $E_{L,S}^T$ is torsionfree whenever $T$ contains at least two places
	of different residue characteristic or at least one place of
	sufficiently large norm.  \exend
\end{remark}

\begin{lemma} \label{c7-lem:Coates}
	Let\/ $S$ be a finite set of places of\/ $K$ containing\/ $S_{\ram} \cup S_{\infty}$.
	Then, the elements\/ $1 - N(v)\phi_w^{-1}$, where\/ $v$ runs through all the 
	finite places of\/ $K$
	such that the sets\/ $S$ and\/ $T_{v} := \{v\}$ satisfy\/ $\hyp(S,T_{v})$, 
	generate\/ $\Ann_{\ZZ[G]} (\mu_L)$. Moreover,
	if we restrict to totally decomposed primes\/ $v$, the greatest common 
	divisor of the integers\/ $1 - N(v)$ equals\/ $w_L$.
\end{lemma}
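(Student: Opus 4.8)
The plan is to make $\Ann_{\ZZ[G]}(\mu_L)$ completely explicit and then to show that the ideal generated by the elements $1-N(v)\phi_w^{-1}$ has the same index in $\ZZ[G]$. Since $\mu_L$ is cyclic of order $w_L$ and $G$ acts on it through the cyclotomic character $\kappa\colon G\to(\ZZ/w_L\ZZ)^\times$, the group ring $\ZZ[G]$ acts on $\mu_L$ through the surjective ring homomorphism $\epsilon\colon\ZZ[G]\to\ZZ/w_L\ZZ$ sending $\sum_g a_g g$ to $\sum_g a_g\kappa(g)$. Hence $\Ann_{\ZZ[G]}(\mu_L)=\ker\epsilon$ and $\ZZ[G]/\Ann_{\ZZ[G]}(\mu_L)\cong\ZZ/w_L\ZZ$. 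Writing $\mathfrak A$ for the ideal generated by the elements in the statement, it therefore suffices to prove that $\mathfrak A\subseteq\ker\epsilon$ and that $[\ZZ[G]:\mathfrak A]\le w_L$.

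For the inclusion: if $\hyp(S,T_v)$ holds then $v$ is unramified in $L/K$, because $v\notin S\supseteq S_{\ram}$, and the residue characteristic $p$ of $v$ is prime to $w_L$ --- otherwise the nontrivial $p$-part of $\mu_L$ would be torsion in $E_{L,S}^{T_v}$, as every $p$-power root of unity reduces to $1$ modulo each prime of $L$ above $v$. Consequently $\mu_L$ injects into $(\mathcal O_L/\mathfrak P_w)^\times$ and $\phi_w$ acts on $\mu_L$ by $\zeta\mapsto\zeta^{N(v)}$, so $1-N(v)\phi_w^{-1}$ annihilates $\mu_L$ and lies in $\ker\epsilon$; thus $\mathfrak A\subseteq\ker\epsilon$.

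I would next dispose of the statement about the greatest common divisor, as it feeds into the index bound. If $v$ is totally decomposed and $\hyp(S,T_v)$ holds then $\phi_w=1$, so $1-N(v)\in\mathfrak A$; and since the residue field of $L$ at $w$ coincides with that of $K$ at $v$ and contains $\mu_{w_L}$, one has $w_L\mid N(v)-1$ for every such $v$. For the reverse divisibility, fix a prime $\ell$ and set $m:=\mathrm{ord}_\ell(w_L)$. As $\mu_{\ell^{m+1}}\not\subseteq L$, the group $\Gal(L(\mu_{\ell^{m+1}})/L)$ is a nontrivial subgroup of $\Gal(L(\mu_{\ell^{m+1}})/K)$, so by the Chebotarev density theorem there is a prime $v$ of $K$, lying outside $S$ and not above $\ell$, that splits completely in $L/K$ but whose Frobenius acts nontrivially on $\mu_{\ell^{m+1}}$. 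Such a $v$ satisfies $\hyp(S,T_v)$ and $\ell^{m+1}\nmid N(v)-1$, whence $\mathrm{ord}_\ell(N(v)-1)=m$. Letting $\ell$ run over the prime divisors of $w_L$ shows that the gcd of the integers $1-N(v)$ equals $w_L$.

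Finally, for the index bound set $B:=\ZZ[G]/\mathfrak A$; by the previous paragraph $w_L\cdot 1_B=0$. Given any $g\in G$, Chebotarev furnishes a prime $v\notin S$ of residue characteristic prime to $w_L$ with $\phi_w=g$ for the chosen place $w\mid v$; then $\hyp(S,T_v)$ holds and, by the computations above, $N(v)\equiv\kappa(g)\pmod{w_L}$. The relation $1-N(v)\phi_w^{-1}\in\mathfrak A$ then yields $g=\phi_w=N(v)\cdot 1_B=\kappa(g)\cdot 1_B$ in $B$, so every element of $G$ becomes an integer multiple of $1_B$; hence $B=\ZZ\cdot 1_B$ is cyclic of order at most $w_L$, i.e.\ $[\ZZ[G]:\mathfrak A]\le w_L$. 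Combined with $\mathfrak A\subseteq\ker\epsilon$, this forces $\mathfrak A=\ker\epsilon=\Ann_{\ZZ[G]}(\mu_L)$. The substance of the argument lies in the two applications of Chebotarev's theorem --- especially the sharper one, which pins down $\mathrm{ord}_\ell(N(v)-1)$ for the gcd --- and in reading $p\nmid w_L$ off the torsion-freeness built into $\hyp(S,T_v)$; the remaining steps are bookkeeping.
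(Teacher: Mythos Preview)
The paper does not prove this lemma; it simply cites \cite[Ch.~IV, Lemma~1.1]{c7-Ta84}. Your argument supplies the details and is correct in outline: identify $\Ann_{\ZZ[G]}(\mu_L)$ as the kernel of the ring map $\epsilon\colon\ZZ[G]\to\ZZ/w_L\ZZ$ determined by the cyclotomic character, check that each $1-N(v)\phi_w^{-1}$ lies in this kernel, and then use Chebotarev twice to bound the index of $\mathfrak A$ from above by $w_L$.

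Two small repairs are needed in the gcd paragraph. First, when you invoke Chebotarev to produce a totally split $v$ with nontrivial Frobenius on $\mu_{\ell^{m+1}}$, you require only $v\notin S$ and $v\nmid\ell$; but $\hyp(S,T_v)$ also needs the residue characteristic of $v$ to be prime to $w_L$ (as you yourself note in the inclusion step and again in the final paragraph), and $v\notin S$ does not force this---for instance $v\mid 2$ always fails $\hyp$ since $-1\equiv 1\pmod{\mathfrak P_w}$, yet $2$ need not lie in $S$. Second, letting $\ell$ run only over the prime divisors of $w_L$ pins down $\mathrm{ord}_\ell$ of the gcd only for those $\ell$, so a priori the gcd could be $w_L$ times an integer coprime to $w_L$; you must also run the same construction for primes $\ell\nmid w_L$ (with $m=0$) to exclude extraneous factors. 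Both fixes are cosmetic---they amount to discarding finitely many more primes before applying Chebotarev---and with them your proof is complete.
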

\begin{proof}
	This is an obvious reformulation of 
	\cite[Ch. IV, Lemma 1.1]{c7-Ta84}.
\end{proof}

\begin{corollary} \label{c7-cor:Brumer-re}
	Let\/ $L/K$ be an abelian extension of number fields and let\/ $S$ be a finite set
	of places of\/ $K$ containing the set\/ $S_{\infty}$ and all places of\/ $K$ 
	that ramify in\/ $L/K$. Then, Brumer's conjecture\/ $B(L/K,S)$
	\index{Brumer's conjecture!abelian} holds if and only if
	\[
	\theta_S^T \in \Ann^{}_{\ZZ[G]}(\cl^{}_L)
	\]
	\index{Stickelberger element}
	for all finite sets\ $T$ of\/ $L$ such that\/ $\hyp(S,T)$ is
	satisfied.
\end{corollary}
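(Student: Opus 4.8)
The plan is to derive the corollary from the multiplicative factorisation of the $(S,T)$-modified Stickelberger elements together with Lemma~\ref{c7-lem:Coates}. For a finite place $v$ of $K$ unramified in $L/K$, put $x_v := 1 - N(v)\phi_w^{-1}$; since $v$ is unramified this is a well-defined element of $\ZZ[G]$, independent of the choice of $w \mid v$ because $G$ is abelian. Whenever $T \cap S = \emptyset$ and $S \supseteq S_{\ram}$ every $v \in T$ is unramified, so $V_\chi^{I_w} = V_\chi$ for all $\chi$, and comparing the definitions above shows that $\delta_T(0) = \prod_{v \in T} x_v$ in $\zeta(\QQ[G]) = \QQ[G]$. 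Since $\theta_S = \theta_S^{\emptyset}$ is the case $T = \emptyset$, it follows that
\[
\theta_S^T \,=\, \delta_T(0)\cdot\theta_S \,=\, \Bigl(\prod_{v \in T} x_v\Bigr)\cdot\theta_S
\]
in $\QQ[G]$. I shall also use freely that $\Ann_{\ZZ[G]}(\mu_L)\theta_S \subseteq \ZZ[G]$ by \eqref{c7-eqn:integrality}.

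The one substantial point is that $\hyp(S,T)$ implies $\prod_{v \in T} x_v \in \Ann_{\ZZ[G]}(\mu_L)$. Decompose $\mu_L = \prod_{\ell \mid w_L}\mu_L^{(\ell)}$ into its primary parts. A short computation with roots of unity shows that a nontrivial $\zeta \in \mu_L$ satisfies $\zeta \equiv 1 \bmod \mathfrak{P}_w$ for all $w \in T(L)$ exactly when the order of $\zeta$ is a power of the residue characteristic $p_v$ for every $v \in T$; hence torsionfreeness of $E_{L,S}^T$ amounts to the statement that for each prime $\ell \mid w_L$ there is some $v \in T$ with $p_v \neq \ell$. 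Fix such a $v$ for a given $\ell$. Since $\mu_L^{(\ell)}$ has order prime to $p_v$, reduction modulo $\mathfrak{P}_w$ embeds it into the residue field at $w$, on which $\phi_w$ acts by $x \mapsto x^{N(v)}$; thus $\phi_w$ acts on $\mu_L^{(\ell)}$ as the $N(v)$-power map, and since $\gcd(N(v),\ell)=1$ the element $x_v$ annihilates $\mu_L^{(\ell)}$. So the factor $x_v$ of $\prod_{v\in T}x_v$ already lies in the ideal $\Ann_{\ZZ[G]}(\mu_L^{(\ell)})$, whence so does the whole product; intersecting over $\ell \mid w_L$ yields $\prod_{v\in T}x_v \in \Ann_{\ZZ[G]}(\mu_L)$. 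This translation of $\hyp(S,T)$ into a condition on residue characteristics — together with the special case that $x_v \in \Ann_{\ZZ[G]}(\mu_L)$ whenever $p_v \nmid w_L$ — is the real content, and both facts are proved in \cite[Ch.~IV, \S1]{c7-Ta84}, so one may simply quote them.

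Granting this, the corollary follows at once. If $B(L/K,S)$ holds and $T$ satisfies $\hyp(S,T)$, then by the factorisation above and the previous paragraph $\theta_S^T = (\prod_{v\in T}x_v)\,\theta_S \in \Ann_{\ZZ[G]}(\mu_L)\theta_S \subseteq \Ann_{\ZZ[G]}(\cl_L)$. Conversely, suppose $\theta_S^T \in \Ann_{\ZZ[G]}(\cl_L)$ for every finite set $T$ of places of $K$ with $\hyp(S,T)$. Applying this to the singletons $T = \{v\}$ for which $\hyp(S,\{v\})$ holds gives $x_v\theta_S = \theta_S^{\{v\}} \in \Ann_{\ZZ[G]}(\cl_L)$ for all such $v$; by Lemma~\ref{c7-lem:Coates} these $x_v$ generate the ideal $\Ann_{\ZZ[G]}(\mu_L)$, and since $\Ann_{\ZZ[G]}(\cl_L)$ is an ideal we conclude $\Ann_{\ZZ[G]}(\mu_L)\theta_S \subseteq \Ann_{\ZZ[G]}(\cl_L)$, which is precisely $B(L/K,S)$.
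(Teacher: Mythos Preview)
Your proof is correct and follows essentially the same route as the paper's: factor $\theta_S^T = \delta_T(0)\,\theta_S$, observe that $\delta_T(0)\in\Ann_{\ZZ[G]}(\mu_L)$ whenever $\hyp(S,T)$ holds, and then appeal to Lemma~\ref{c7-lem:Coates} for the singletons $T_v=\{v\}$ to recover the full annihilator ideal. The only difference is one of detail: the paper simply asserts $\delta_T(0)\in\Ann_{\ZZ[G]}(\mu_L)$, whereas you supply the underlying argument via the primary decomposition of $\mu_L$ and the Frobenius action---which is exactly the content of \cite[Ch.~IV, \S1]{c7-Ta84} that both you and the paper ultimately cite.
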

\begin{proof}
	We have $\theta_S^T = \delta_T(0) \theta_S$ and $\delta_T(0) 
	\in \Ann_{\ZZ[G]}(\mu_L)$ whenever $\hyp(S,T)$ is satisfied.
	As $\delta_{T_v}(0) = 1 - N(v) \phi_w^{-1}$, the result follows
	from Lemma \ref{c7-lem:Coates}.
\end{proof}

Let $N_{L/K}: L^{\times} \rightarrow K^{\times}$ be the field-theoretic
norm map.
For $\epsilon \in L^{\times}$ we define
\[
S_{\epsilon} :=  
\{v \mbox{ finite place of } K: v \mid N_{L/K}(\epsilon) \}.
\]

\begin{proposition} \label{c7-prop:abelian-BS-re}
	Let\/ $S$ be a finite set of places of\/ $K$ containing\/ 
	$\mbox{$S_{\ram} \cup S_{\infty}$}$. Then, the Brumer--Stark conjecture\/ $BS(L/K,S)$
	\index{Brumer--Stark conjecture!abelian}
	is equivalent to the following claim:
	For every non-zero fractional ideal\/ $\mathfrak a$ of\/ $L$
	there is an anti-unit\/ $\epsilon \in L^{\times}$ such that
	\[
	\mathfrak a^{w^{}_L \cdot \theta^{}_S}\, =\, (\epsilon)
	\]
	and, for each finite set\/ $T$ of primes of\/ $K$ such that\/ 
	$\hyp(S \cup S_{\epsilon},T)$ is satisfied, 
	there is an\/ $\epsilon^{}_T \in E_{S_{\epsilon}}^T$
	such that
	\begin{equation} \label{c7-eqn:condition}
	\epsilon^{\delta_T(0)} = \epsilon_T^{w_L}.
	\end{equation}
\end{proposition}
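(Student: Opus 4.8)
The plan is to show that each of the two conditions of $BS(L/K,S)$ is equivalent to the corresponding assertion in the proposition, working one fractional ideal $\mathfrak a$ at a time. So fix $\mathfrak a$ and suppose first that $BS(L/K,S)$ holds, producing an anti-unit $\epsilon$ with $\mathfrak a^{w_L\theta_S}=(\epsilon)$ and $L(\sqrt[w_L]{\epsilon})/K$ abelian. The ideal identity already matches, so the work is to translate the abelianness of $L(\sqrt[w_L]{\epsilon})/K$ into the congruence condition \eqref{c7-eqn:condition} for all admissible $T$. The key observation is that $L(\sqrt[w_L]{\epsilon})/K$ being abelian is, via Kummer theory over $L$ (which contains $\mu_{w_L}$), a statement about how $G=\Gal(L/K)$ acts on the class of $\epsilon$ in $L^\times/(L^\times)^{w_L}$ together with a compatibility at the relevant primes; in the Brumer--Stark literature (see Tate \cite{c7-Ta84}, Ch.\ IV) this is precisely repackaged as the existence, for each $v$ away from $S\cup S_\epsilon$, of a local $w_L$-th root, i.e.\ of an element $\epsilon_{T_v}\in E^{T_v}_{S_\epsilon}$ with $\epsilon^{1-N(v)\phi_w^{-1}}=\epsilon_{T_v}^{w_L}$. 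Since $\delta_{T_v}(0)=1-N(v)\phi_w^{-1}$ and, by Lemma \ref{c7-lem:Coates}, such $\delta_{T_v}(0)$ generate $\Ann_{\ZZ[G]}(\mu_L)$, multiplicativity in $T$ upgrades the one-prime statements to the condition for all $T$ with $\hyp(S\cup S_\epsilon,T)$ satisfied.

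For the converse direction one reverses this translation: given $\epsilon$ with $\mathfrak a^{w_L\theta_S}=(\epsilon)$ and \eqref{c7-eqn:condition} for all admissible $T$, one must recover that $L(\sqrt[w_L]{\epsilon})/K$ is abelian. Here I would invoke the standard criterion (again essentially \cite[Ch.\ IV]{c7-Ta84}) that an extension of the form $L(\sqrt[w_L]{\epsilon})/K$, with $\epsilon$ an anti-unit, is abelian if and only if the Kummer class of $\epsilon$ satisfies the appropriate Frobenius-compatibility at all but finitely many primes $v$, which is exactly what \eqref{c7-eqn:condition} for $T=T_v$ encodes; the finitely many excluded primes are harmless since abelianness of a finite extension can be checked by splitting conditions on a set of primes of density one. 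One should also check that the precise set of $T$ allowed — those with $\hyp(S\cup S_\epsilon,T)$ rather than merely $\hyp(S,T)$ — is the right one: enlarging $S$ by $S_\epsilon$ is forced because the norm of $\epsilon$ is a unit exactly outside $S_\epsilon$, so $\epsilon$ lies in $E_{S_\epsilon}$ and the auxiliary elements $\epsilon_T$ must be sought there; the torsionfreeness clause in $\hyp$ guarantees that $\epsilon_T$, when it exists, is unique, which makes the correspondence clean.

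The remaining point is the mild indeterminacy of $\epsilon$: by the Remark following $BS(L/K,S)$, an anti-unit satisfying (1) is pinned down only up to a root of unity, so one must verify that replacing $\epsilon$ by $\zeta\epsilon$ with $\zeta\in\mu_L$ does not affect the validity of \eqref{c7-eqn:condition} — indeed $\zeta^{\delta_T(0)}=1$ whenever $\hyp(S,T)$ holds (this is part of Corollary \ref{c7-cor:Brumer-re}'s proof), so the condition is insensitive to this ambiguity, and likewise $L(\sqrt[w_L]{\zeta\epsilon})=L(\sqrt[w_L]{\epsilon})$. I expect the main obstacle to be the careful bookkeeping in the first paragraph: making the passage between "abelian Kummer extension" and "local $w_L$-th roots exist compatibly with Frobenius" fully precise, keeping track of exactly which primes enter, and confirming that Lemma \ref{c7-lem:Coates} supplies enough $\delta_{T_v}(0)$'s to promote the per-prime statements to the full annihilator-level condition. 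Once that dictionary is set up, both implications are formal.
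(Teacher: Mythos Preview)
Your sketch is sound and aligns with the paper's approach: the paper's proof consists entirely of the citation ``This is \cite[Ch.~IV, Prop.~1.2]{c7-Ta84}'', and what you have written is essentially an outline of Tate's argument there, invoking the same Kummer-theoretic dictionary between the abelianness of $L(\sqrt[w_L]{\epsilon})/K$ and the Frobenius-compatibility condition \eqref{c7-eqn:condition}. So there is no discrepancy; you have simply unpacked what the paper leaves as a reference.
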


\begin{proof}
	This is \cite[Ch. IV, Prop. 1.2]{c7-Ta84}.
\end{proof}

\begin{remark}
	In many cases, one can omit the condition that $\epsilon$
	is an anti-unit: \index{anti-unit}
	Suppose that the order of $\mathfrak a$ in the 
	class group is odd. Then, we may write
	$\mathfrak a = \mathfrak b^2 (u)$ for some $u \in L^{\times}$.
	Now assume that $\mathfrak b^{w_L \theta_S}$ is principal
	and generated by some $\beta \in L^{\times}$ such that
	\eqref{c7-eqn:condition} holds with $\epsilon$ replaced with
	$\beta$. As $(1-j) \theta_S = 2 \theta_S$, we then have
	$\mathfrak a^{w_L \theta_S} = (\epsilon)$, where
	$\epsilon := u^{w_L \theta_S} \cdot \beta^{1-j}$ is an
	appropriate anti-unit. \exend
\end{remark}

\section{The general case}

\subsection{How to generalise to non-abelian extensions?}

Now let $L/K$ be an arbitrary Galois CM-extension with Galois group $G$.
How can we formulate Brumer's conjecture and the Brumer--Stark conjecture
for this more general situation? 

We still have the Stickelberger elements $\theta_S^T$ at our disposal.
In view of Corollary \ref{c7-cor:Brumer-re} one is tempted to conjecture
that $\theta_S^T$ still annihilates the class group whenever
$\hyp(S,T)$ is satisfied. Although the Stickelberger elements
always belong to the centre of $\QQ[G]$, it is, however, in general not true
that $\theta_S^T$ has integral coefficients.
Thus $\theta_S^T$
does in general not even act on the class group!

\begin{example} \label{c7-ex:Nomura}
	This example is due to Nomura \cite[\S 6]{c7-No14b}.
	Let $\alpha$ be a root of $x^3-11x+7$ and set
	$L = \QQ(\sqrt{-3}, \sqrt{4001}, \alpha)$. Then, $L/\QQ$
	is a Galois CM-extension with Galois group $G$ isomorphic to
	$\ZZ/ 2 \ZZ \times S_3$, where $j$ generates the first factor
	and $S_3$ denotes the symmetric group \index{group!symmetric}
	on $3$ letters. We write
	\[
	S_3\, =\, \langle \sigma, \tau \mid \sigma^3 = \tau^2 = 1, \tau \sigma \tau^{-1} = \sigma^{-1} \rangle.
	\]
	Then $S_{\ram} = \left\{3, 4001 \right\}$ and
	for $S = S_{\ram} \cup S_{\infty}$ and $T = \left\{7\right\}$ one has
	\[
	\theta_S^T\, =\, \frac{1}{3}(1-j)\left( 3410 - 1774(\sigma + \sigma^2) + 44
	(\tau + \sigma \tau + \sigma^2 \tau) \right)
	\]
	\index{Stickelberger element}
	which visibly does not belong to $\ZZ[G]$. \exend
\end{example}

The idea is to replace the centre of $\ZZ[G]$ by a larger ring
$\mathcal{I}(G)$ such that $\theta_S^T$ always belongs to
$\mathcal{I}(G)$ and such that $\mathcal{I}(G) = \ZZ[G]$ when $G$ is abelian.
In order to achieve annihilators, one then has to multiply by a certain
`denominator ideal' $\mathcal{H}(G)$. We next introduce these purely algebraic
objects.

\subsection{Denominator ideals and the integrality ring}
\index{denominator ideal|(} \index{integrality ring|(}
Let $R$ be a Noetherian integrally closed domain with field of fractions $E$.
Let $A$ be a finite-dimensional separable $E$-algebra and let $\mathfrak{A}$ be an $R$-order in $A$. Our main examples are group rings
$\mathfrak{A} = R[G]$ and $A = E[G]$, 
where $R$ and $E$ are either $\ZZ$ and $\QQ$ or $\ZZ_p$ and $\QQ_p$ for a prime $p$,
respectively.
The reduced norm map \index{reduced norm}
$\nr = \nr_{A}: A \longrightarrow \zeta(A)$ is defined component-wise on the
Wedderburn decomposition of $A$ and extends to matrix rings over $A$ (see \cite[\S 7D]{c7-CR81}).
We choose a maximal $R$-order $\mathfrak{M}$ such that $\mathfrak{A} \subseteq \mathfrak{M} \subseteq A$.
Following \cite[\S 3.6]{c7-JN13}, for every matrix $H \in M_{n \times n} (\mathfrak{A})$ there is a generalised adjoint matrix
\index{generalised adjoint matrix}
$H^{\ast} \in M_{n\times n}(\mathfrak{M})$ such that $H^{\ast} H = H H^{\ast} = \nr (H) \cdot 1_{n \times n}$
(note that the conventions in \cite[\S 3.6]{c7-JN13} slightly differ from those in  \cite{c7-Ni10}).
If $\tilde{H} \in M_{n \times n} (\mathfrak{A})$ is a second matrix, then $(H \tilde{H})^{\ast} = \tilde{H}^{\ast} H^{\ast}$.
We define
\begin{align*}
\mathcal{H}(\mathfrak{A}) & := 
\{ x \in \zeta(\mathfrak{A}) \mid xH^{\ast} \in M_{n \times n}(\mathfrak{A}) \, \forall H \in M_{n \times n}(\mathfrak{A}) \, \forall n \in \NN \},\\
\mathcal{I}(\mathfrak{A}) & := 
\langle \nr(H) \mid H \in  M_{n \times n}(\mathfrak{A}), \,  n \in \NN\rangle_{\zeta(\mathfrak{A})}.
\end{align*}
One can show that these are $R$-lattices satisfying
\begin{equation}  \label{c7-eqn:HI_equals_H}
\mathcal{H}(\mathfrak{A}) \cdot \mathcal I(\mathfrak{A})\, =\, \mathcal{H}(\mathfrak{A}) 
\subseteq \zeta(\mathfrak{A}) \subseteq \mathcal{I}(\mathfrak{A}) \subseteq \zeta(\mathfrak{M}).
\end{equation}
Hence $\mathcal{H}(\mathfrak{A})$ is an ideal in the commutative $R$-order $\mathcal{I}(\mathfrak{A})$.
We will refer to $\mathcal{H}(\mathfrak{A})$ 
and $\mathcal{I}(\mathfrak A)$ as the \emph{denominator ideal} 
\index{denominator ideal}
and the \emph{integrality ring} \index{integrality ring}
of the $R$-order $\mathfrak{A}$, respectively.

\begin{remark}
	The integrality ring is the smallest subring of $\zeta(A)$ that contains
	$\zeta(\mathfrak A)$ and the image of the reduced norm of all matrices
	with entries in $\mathfrak A$. The denominator ideal measures the failure
	of the generalised adjoint matrices having coefficients in $\mathfrak A$. \exend	
\end{remark}

If $p$ is a prime and $G$ is a finite group, we set
\begin{align*}
\mathcal{I}(G) &:= \mathcal{I}(\ZZ[G]), &  \mathcal{I}_{p}(G) & := \mathcal{I}(\ZZ_{p}[G]), \\
\mathcal{H}(G) & := \mathcal{H}(\ZZ[G]), &  \mathcal{H}_{p}(G) & := \mathcal{H}(\ZZ_{p}[G]).
\end{align*}
The first claim of the following result is a special case of \cite[Prop.~4.4]{c7-JN13}.
The second claim then follows easily from \eqref{c7-eqn:HI_equals_H}.

\begin{proposition} \label{c7-prop:best-denominators}
	Let\/ $p$ be prime and $G$ be a finite group. Then, one has\/ 
	$\mathcal{H}_{p}(G) = \zeta(\ZZ_{p}[G])$ if and only if $p$ 
	does not divide the order of the commutator subgroup of $G$. Moreover, 
	in this case, we have\/ $\mathcal{I}_{p}(G) = \zeta(\ZZ_{p}[G])$.
\end{proposition}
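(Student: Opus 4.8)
The plan is to reduce the statement to a local question at $p$ and then to a classical computation with Schur indices and the group-ring structure of $\ZZ_p[G]$. First I would recall the explicit description of $\mathcal H_p(G)$ in terms of generalised adjoint matrices: by definition, $\mathcal H_p(G) = \zeta(\ZZ_p[G])$ precisely when for every $n$ and every $H \in M_{n\times n}(\ZZ_p[G])$ the generalised adjoint $H^\ast$ already has entries in $\ZZ_p[G]$ rather than merely in the chosen maximal order $\mathfrak M$. The cited result \cite[Prop.~4.4]{c7-JN13} presumably identifies the obstruction: $H^\ast$ has a denominator exactly when the Wedderburn components of $\QQ_p[G]$ on which $\mathfrak M$ strictly exceeds the image of $\ZZ_p[G]$ are hit, and these in turn are controlled by the primes dividing $|G'|$, where $G' = [G,G]$ is the commutator subgroup. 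So the first (and already most of the) work is simply citing that proposition correctly and extracting from it the stated equivalence.

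For the forward implication — if $p \nmid |G'|$ then $\mathcal H_p(G) = \zeta(\ZZ_p[G])$ — I would argue as follows. If $p \nmid |G'|$, then $G$ has a normal $p$-Sylow subgroup $P$ that is contained in the centre modulo $G'$; more to the point, $\QQ_p[G]$ is a \emph{maximal} order in its own right away from exactly the primes dividing $|G'|$, by a theorem going back to the analysis of when $\ZZ_p[G]$ is maximal (this is the classical fact that $\ZZ_p[G]$ is a maximal $\ZZ_p$-order iff $p \nmid |G|$, refined by the observation that only the non-commutative Wedderburn blocks — which are governed by $G/G'$ having to be non-trivial in a suitable sense — can fail maximality, and those are detected by $|G'|$). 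Concretely: when $p \nmid |G'|$ one can take $\mathfrak M = \ZZ_p[G]$ itself on all the relevant components, so $H^\ast \in M_{n\times n}(\ZZ_p[G])$ automatically, giving $\mathcal H_p(G) = \zeta(\ZZ_p[G])$. For the converse, if $p \mid |G'|$ one exhibits a single matrix $H$ (in fact $H$ can be taken $1\times 1$, i.e.\ an element of $\ZZ_p[G]$, sitting inside a non-commutative Wedderburn block) whose reduced norm, multiplied into the appropriate central idempotent, fails to lie in $\zeta(\ZZ_p[G])$; this is exactly the content of the "only if" direction in \cite[Prop.~4.4]{c7-JN13}.

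The second claim is then a formal consequence of the chain \eqref{c7-eqn:HI_equals_H}. Indeed that display gives $\mathcal H(\mathfrak A)\cdot\mathcal I(\mathfrak A) = \mathcal H(\mathfrak A) \subseteq \zeta(\mathfrak A) \subseteq \mathcal I(\mathfrak A)$, applied here with $\mathfrak A = \ZZ_p[G]$. If $\mathcal H_p(G) = \zeta(\ZZ_p[G])$, then $\zeta(\ZZ_p[G]) = \mathcal H_p(G)\cdot \mathcal I_p(G) = \zeta(\ZZ_p[G])\cdot\mathcal I_p(G) = \mathcal I_p(G)$, the last equality because $\mathcal I_p(G)$ is a $\zeta(\ZZ_p[G])$-module containing $1$ and contained in $\zeta(\mathfrak M)$, so it is sandwiched between $\zeta(\ZZ_p[G])$ and itself. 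Hence $\mathcal I_p(G) = \zeta(\ZZ_p[G])$, as asserted.

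The main obstacle is the first implication: pinning down \emph{why} the obstruction in \cite[Prop.~4.4]{c7-JN13} is exactly $|G'|$ rather than, say, $|G|$ or the exponent of $G/Z(G)$. The key input is that the commutative Wedderburn blocks of $\QQ_p[G]$ — which correspond to the linear characters, hence to characters factoring through $G/G' = G^{\mathrm{ab}}$ — never contribute a denominator (there the reduced norm is the identity and $\ZZ_p[G^{\mathrm{ab}}]$ is as good as its maximal order after inverting $|G^{\mathrm{ab}}|$, but the linear part is commutative so adjoints are trivial anyway), while the genuinely matrix blocks, where adjoints can have denominators, only appear once $p \mid |G'|$. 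Making this precise is precisely where one leans on the cited proposition; in this survey it suffices to quote it, so the "proof" is really the short deduction above together with the formal manipulation of \eqref{c7-eqn:HI_equals_H}.
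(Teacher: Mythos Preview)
Your proof is correct and matches the paper's exactly: the equivalence is cited as a special case of \cite[Prop.~4.4]{c7-JN13}, and the second claim is deduced from \eqref{c7-eqn:HI_equals_H} via $\mathcal H_p(G)\cdot\mathcal I_p(G) = \mathcal H_p(G)$ together with the fact that $\mathcal I_p(G)$ is a $\zeta(\ZZ_p[G])$-module, just as you do.

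One caveat on your informal sketch of why the cited result holds: the assertion that when $p\nmid |G'|$ one may take $\mathfrak M = \ZZ_p[G]$ ``on all the relevant components'' is false (for $G=S_3$ and $p=2$ one has $|G'|=3$ yet $\ZZ_2[S_3]$ is not a maximal order), and $p\nmid |G'|$ certainly does not force $G$ to have a normal Sylow $p$-subgroup. The argument in \cite{c7-JN13} does not proceed via maximality of $\ZZ_p[G]$; rather, the condition $p\nmid |G'|$ enters because it makes the idempotent $e_{G'}$ integral, and the resulting decomposition is what controls the generalised adjoints. Since you explicitly defer to the citation this does not affect the validity of your proof, but the heuristic as written is misleading.
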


Let $\mathfrak A = R[G]$, where $R$ is either $\ZZ$ or $\ZZ_p$ for a prime $p$.
As above let $\mathfrak M$ be a maximal order containing $\mathfrak A$. The central conductor \index{central conductor}
of $\mathfrak M$ over $\mathfrak A$
is defined to be
$\mathcal F(\mathfrak A) := \left\{x \in \zeta(\mathfrak M) |
x \mathfrak M \subseteq \mathfrak A \right\}$
and is explicitly given by (cf.~\cite[Thm.~27.13]{c7-CR81})
\begin{equation} \label{c7-eqn:conductor-formula}
\mathcal F(\mathfrak{A})\, =\, \bigoplus_{\chi} \frac{|G|}{\chi(1)}
\mathcal D^{-1} (E(\chi) / E),
\end{equation}
where $\mathcal D^{-1} (E(\chi)/ E)$ denotes the inverse different of
the extension 
\[
E(\chi)\, :=\, E(\chi(g): g \in G) \mbox{ over } E = \Quot(R)
\]
and the sum runs over all irreducible characters of $G$ 
modulo Galois action. It is clear from the definition that we
always have $\mathcal F(\mathfrak{A}) \subseteq \mathcal H(\mathfrak{A})$.
As above we set
\[
\mathcal{F}(G)\, :=\, \mathcal{F}(\ZZ[G]), \quad  \mathcal{F}_{p}(G)\, :=\, \mathcal{F}(\ZZ_{p}[G]).
\]

\begin{example} \label{c7-ex:dihedral}
	Let $p$ and $\ell$ be primes with $\ell$ odd. 
	We compute the denominator ideals of $\ZZ_{p}[D_{2\ell}]$, 
	where $D_{2 \ell}$ denotes the dihedral group of order $2 \ell$.
	\index{group!dihedral}
	In the case $\ell=3$, one has
	$D_{6} \simeq S_{3}$, the symmetric group on three letters.
	We let $\mathfrak{M}_{p}(D_{2\ell})$ be a maximal $\ZZ_p$-order
	containing $\ZZ_p[D_{2\ell}]$.
	Then
	\[\begin{array}{rl}
	\mathcal{H}_{p}(D_{2\ell})\negthickspace\negthickspace & =\, \begin{cases}
	\zeta(\ZZ_{p}[D_{2\ell}]), & \text{if $p\neq \ell$,} \\
	\mathcal{F}_{p}(D_{2\ell}), & \text{if $p=\ell$;}
	\end{cases} \\[5mm]
	\mathcal{I}_{p}(D_{2\ell})\negthickspace\negthickspace & =\, \begin{cases}
	\zeta(\ZZ_{p}[D_{2\ell}]), & \text{if $p \neq \ell$,} \\
	\zeta(\mathfrak{M}_{p}(D_{2\ell})) & \text{if $p=\ell$.}
	\end{cases} \end{array}
	\]
	In fact, in the case that $p \neq \ell$, 
	the result follows from Proposition \ref{c7-prop:best-denominators}.
	In the case $p=\ell$, the result is established in 
	\cite[Ex.~6]{c7-JN13}. \exend
\end{example}

\begin{example} \label{c7-ex:Aff(q)}
	Let $p$ be a prime and let $q = \ell^{n}$ be a prime power.
	We consider the group $\mathrm{Aff}(q) = \FF^{}_q \rtimes \FF_q^{\times}$
	of affine transformations 
	\index{group!of affine transformations}
	on $\FF_q$, the finite field 
	with $q$ elements.	
	Let $\mathfrak{M}_{p}(\Aff(q))$ be a maximal $\ZZ_{p}$-order such that $\ZZ_{p}[\Aff(q)] \subseteq \mathfrak{M}_{p}(\Aff(q)) \subseteq \QQ_{p}[\Aff(q)]$.
	Then by \cite[Prop.~6.7]{c7-JN16} we have
	\[\begin{array}{rl}
	\mathcal{H}^{}_{p}(\Aff(q)) \negthickspace\negthickspace & =\, 
	\begin{cases}
	\zeta(\ZZ^{}_{p}[\Aff(q)]), & \text{ if $p \neq \ell$,} \\
	\mathcal{F}^{}_{p}(\Aff(q)), & \text{ if $p=\ell \neq 2$;}
	\end{cases} \\[5mm]
	\mathcal{I}^{}_{p}(\Aff(q)) \negthickspace\negthickspace & =\, 
	\begin{cases}
	\zeta(\ZZ^{}_{p}[\Aff(q)]), & \text{if $p \neq \ell$,} \\
	\zeta(\mathfrak{M}^{}_{p}(\Aff(q))), & \text{if $p=\ell \neq 2$.}
	\end{cases} \end{array}
	\]
	If $p=\ell=2$, then we have containments
	\[\begin{array}{c}
	2\mathcal{H}_{2}(\Aff(q))\, \subseteq\, \mathcal{F}_{2}(\Aff(q))\, \subseteq\, \mathcal{H}_{2}(\Aff(q)), \\[2mm]
	2\zeta(\mathfrak{M}_{2}(\Aff(q)))\, \subseteq\, \mathcal{I}_{2}(\Aff(q))\, 
	\subseteq\, \zeta(\mathfrak{M}_{2}(\Aff(q))). \end{array}
	\]
	Note that the commutator subgroup of $\Aff(q)$ is $\FF_q$ so that
	the case $p \neq \ell$ again follows from Proposition
	\ref{c7-prop:best-denominators}.\exend
\end{example}

\begin{example}
	Let $S_4$ be the symmetric group on $4$ letters. 
	\index{group!symmetric}
	If $p$ is an odd prime, then 
	$\mathcal{I}_p(S_4) = \zeta(\mathfrak{M}_p(S_4))$
	and $\mathcal{H}_p(S_4) = \mathcal{F}_p(S_4)$. However, if $p=2$ we have
	\[\begin{array}{c}
	\quad 	\mathcal{F}^{}_2(S^{}_4)\, \subsetneq\, \mathcal{H}^{}_2(S^{}_4)\, 
	\subsetneq \zeta(\ZZ^{}_2[S^{}_4]); \\[2mm]
	\zeta(\ZZ^{}_2[S^{}_4])\ \subsetneq\  \mathcal{I}^{}_2(S^{}_4) \ 
	\subsetneq\, \zeta(\mathfrak M^{}_2(S^{}_4)). \end{array}
	\]
	This follows from \cite[Prop.~6.8]{c7-JN16}. \exend
\end{example}
\index{denominator ideal|)} \index{integrality ring|)}

\subsection{Integrality conjectures}
\index{integrality conjecture}
\index{Stickelberger element|(}
Assume that $L/K$ is a Galois CM-extension
with Galois group $G$. Let $S$ be a finite set of places of $K$
containing $S_{\ram} \cup S_{\infty}$.
We choose a maximal order $\mathfrak{M}(G)$ such that $\ZZ[G] \subseteq \mathfrak{M}(G) \subseteq \QQ[G]$.
If $T$ is a finite set of places of $K$ such that
$\hyp(S,T)$ is satisfied, then 
$\delta_T(0) = \nr(1 - N(v)\phi_w^{-1})$ belongs to $\zeta(\mathfrak{M}(G))$.
We define $\mathfrak{A}_{S}$ to be the $\zeta(\ZZ[G])$-submodule of $\zeta(\mathfrak{M}(G))$ generated
by the  elements $\delta_T(0)$, where $T$ runs through the finite sets of places of $K$
such that $\hyp(S,T)$ is satisfied.

\begin{conjecture} \label{c7-conj:integrality}
	Let $S$ be a finite set of places of $K$ containing $S_{\ram} \cup S_{\infty}$.
	Then, we have an inclusion
	\[
	\mathfrak{A}_S \theta_S\, \subseteq\, \mathcal{I}(G).
	\]
\end{conjecture}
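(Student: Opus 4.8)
The plan is to reduce the statement to a local, prime-by-prime assertion and then exploit the known structure of $\theta_S^T$ together with a norm-compatibility argument. First I would observe that $\mathcal{I}(G) = \bigcap_p \mathcal{I}_p(G)$ inside $\zeta(\QQ[G])$, where the intersection is taken over all rational primes $p$ (this follows from the localisation behaviour of $R$-lattices in a separable algebra, since $\mathcal{I}(\ZZ[G])$ is a $\ZZ$-lattice and $\zeta(\ZZ_p[G]) = \zeta(\ZZ[G]) \otimes \ZZ_p$). Likewise $\mathfrak{A}_S \otimes \ZZ_p$ is the $\zeta(\ZZ_p[G])$-module generated by the $\delta_T(0)$ for which $\hyp(S,T)$ holds. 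So it suffices to prove, for each prime $p$, that $\delta_T(0)\,\theta_S \in \mathcal{I}_p(G)$ for every admissible $T$ — equivalently, since $\theta_S^T = \delta_T(0)\theta_S$, that $\theta_S^T \in \mathcal{I}_p(G)$ whenever $\hyp(S,T)$ is satisfied.

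Second, I would dispose of the primes $p$ not dividing $|G'|$, where $G'$ is the commutator subgroup: by Proposition \ref{c7-prop:best-denominators} we then have $\mathcal{I}_p(G) = \zeta(\ZZ_p[G])$, so it is enough to know that $\theta_S^T$ has $p$-integral coefficients. This integrality is exactly the content of the Cassou-Nogu\`es--Deligne--Ribet--Barsky result \eqref{c7-eqn:integrality} applied to abelian subquotients: writing $\chi = \mathrm{ind}_H^G \psi$ for a linear character $\psi$ of a subgroup $H$ (or, more carefully, using Brauer induction together with the fact that $\theta_S^T$ lies in $\zeta(\QQ[G])$ and its $\chi$-components are built from abelian $L$-values), one sees that $\mathrm{Ann}_{\ZZ[G]}(\mu_L)\theta_S \subseteq \ZZ[G]$ already gives $\delta_T(0)\theta_S \in \ZZ[G]$ for admissible $T$, hence $\theta_S^T \in \zeta(\ZZ[G]) \subseteq \zeta(\ZZ_p[G]) = \mathcal{I}_p(G)$ for these $p$. (In fact this argument gives the much stronger $\theta_S^T \in \zeta(\ZZ[G])$ away from $|G'|$, which is consistent with Example \ref{c7-ex:Nomura} where the denominator is $3 = \ell$, a divisor of $|G'| = |\ZZ/3\ZZ| $ inside $S_3$.)

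Third, the essential case is $p \mid |G'|$. Here $\mathcal{I}_p(G)$ is generated over $\zeta(\ZZ_p[G])$ by reduced norms $\nr(H)$ of matrices over $\ZZ_p[G]$, and in particular it contains $\nr(1 - N(v)\phi_w^{-1}) = \delta_{T_v}(0)$ for every finite $v$ with $\hyp(S,T_v)$. The strategy is to realise $\theta_S^T$ as a $\zeta(\ZZ_p[G])$-combination of such reduced norms. Concretely, I would use that for a fixed admissible $T$ the element $\theta_S^T = \delta_T(0)\theta_S$ and that, by Lemma \ref{c7-lem:Coates}, varying $v$ over totally decomposed primes the elements $1 - N(v)$ have gcd $w_L$; combined with an analytic approximation (choosing auxiliary primes $v$ so that $N(v)\phi_w^{-1}$ realises prescribed congruences, via Chebotarev) this should express $\delta_T(0)\theta_S$ as a $\zeta(\ZZ_p[G])$-linear combination of elements $\delta_{T'}(0)\,\delta_{T''}(0)^{-1}\cdots$ that visibly lie in $\mathcal{I}_p(G)$. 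Alternatively, and more robustly, one invokes the interpretation of $\theta_S^T$ as the value at $s=0$ of an Iwasawa-theoretic object: over the cyclotomic $\ZZ_p$-extension $K_\infty$ the relevant Stickelberger elements assemble into a unit of a localisation of $\Lambda[G]$, and the main conjecture (Ritter--Weiss \cite{c7-RW11}, Kakde \cite{c7-Ka13}) identifies this with a reduced-norm-type object in $K_1$, whose image under the reduced norm lands in $\mathcal{I}(\Lambda[G])$; specialising at the augmentation then places $\theta_S^T$ in $\mathcal{I}_p(G)$.

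The main obstacle, I expect, is precisely this last step at $p \mid |G'|$: the naive "express as a combination of reduced norms of linear-in-$\phi_w$ matrices" approach requires enough independent auxiliary primes to solve a system of congruences in the non-commutative ring $\ZZ_p[G]/\!\sim$, and controlling the reduced norms of the resulting matrices (rather than just their leading coefficients) is delicate. The cleanest route is almost certainly to deduce Conjecture \ref{c7-conj:integrality} as a corollary of the equivariant Iwasawa main conjecture together with the functional-equation/interpolation property relating $\theta_S^T(L/K)$ to $p$-adic $L$-functions — i.e. to defer the integrality to the $K_1$-theoretic formulation where "being a reduced norm" is built in — rather than to prove it by hand. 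If the survey wishes to keep the argument self-contained it can at least record the unconditional cases ($p \nmid |G'|$, and $G$ with all Sylow subgroups abelian, or $G$ of the specific types in Examples \ref{c7-ex:dihedral}--\ref{c7-ex:Aff(q)}) and cite \cite{c7-Ni11, c7-Bu11} for the general assertion.
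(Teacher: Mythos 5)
The statement you are addressing is a \emph{conjecture}, not a theorem: the paper offers no proof of Conjecture \ref{c7-conj:integrality} in general, only the abelian case (Theorem \ref{c7-thm:abelian-int}), the strictly weaker containment in $\zeta(\mathfrak{M}(G))$ for monomial-times-abelian groups (Theorem \ref{c7-thm:maxint}), and isolated examples such as $D_{4\ell}$. So the question is whether your reductions would constitute a proof, and they do not; there are two concrete gaps. The first is in your treatment of the primes $p$ not dividing the order of the commutator subgroup. There, Proposition \ref{c7-prop:best-denominators} does reduce the claim to $\theta_S^T \in \zeta(\ZZ_p[G])$, but your assertion that this follows from \eqref{c7-eqn:integrality} ``applied to abelian subquotients'' via Brauer induction is unsubstantiated. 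Functoriality of Artin $L$-functions compares individual $L$-values across induction and inflation, but the element $\theta_S^T(L/K)$ is assembled from \emph{all} irreducible characters of $G$ at once, and expressing it in the group-element basis introduces denominators ($\chi(1)$'s and $|G|$) that the abelian integrality of the Stickelberger elements of intermediate extensions does not control. This is exactly why \cite{c7-Ni16} obtains only $\mathfrak{A}_S\theta_S \subseteq \zeta(\mathfrak{M}(H))[C]$ (i.e.\ Conjecture \ref{c7-conj:weak-integrality}) even under a monomiality hypothesis, and nothing for general $G$. Your parenthetical claim that $\theta_S^T \in \zeta(\ZZ[G])$ away from the commutator order would already be a new theorem, stronger than anything proved in the paper.

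The second gap is the case where $p$ divides the order of the commutator subgroup, which you correctly identify as essential but do not resolve. The Chebotarev/congruence strategy is not carried out, and as you yourself note there is no known way to exhibit $\theta_S^T$ as a $\zeta(\ZZ_p[G])$-combination of reduced norms of explicit matrices over $\ZZ_p[G]$. The Iwasawa-theoretic alternative is conditional: the EIMC is only known under $\mu=0$ (or in the hybrid situations of \cite{c7-JN17a}), its use at $s=0$ requires an interpolation statement of the type of Conjecture \ref{c7-conj:int-property}, and even then the literature extracts annihilation statements and weak integrality, not Conjecture \ref{c7-conj:integrality} itself. Your final paragraph is an accurate survey of the obstruction, but it concedes rather than closes the gap; as written, the proposal proves at most the cases already recorded in the paper (abelian $G$, and the examples where $\mathcal{I}_p(G)$ and $\mathcal{H}_p(G)$ are computed explicitly).
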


As the integrality ring $\mathcal{I}(G)$ is always contained in the
centre of the maximal order $\mathfrak{M}(G)$, we may also state the
following considerably weaker conjecture.

\begin{conjecture} \label{c7-conj:weak-integrality}
	\index{integrality conjecture!weak}
	Let $S$ be a finite set of places of $K$ containing $S_{\ram} \cup S_{\infty}$.
	Then we have an inclusion
	\[
	\mathfrak{A}^{}_S \theta^{}_S\, \subseteq\, \zeta(\mathfrak{M}(G)).
	\]
\end{conjecture}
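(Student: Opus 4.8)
The plan is to reduce the inclusion, componentwise on the Wedderburn decomposition of $\QQ[G]$, to the integrality of $(S,T)$-modified Stickelberger elements of abelian subextensions, where \eqref{c7-eqn:integrality} applies. First I would unwind both sides. Since $\mathfrak{M}(G)$ is a maximal $\ZZ$-order in $\QQ[G]$, its centre is the integral closure of $\ZZ$ in $\zeta(\QQ[G])$; under the identification $\zeta(\QQ[G]) \hookrightarrow \prod_{\chi \in \Irr_{\CC}(G)}\CC$ this means that $x \in \zeta(\mathfrak{M}(G))$ if and only if $x(\chi)$ is an algebraic integer for every $\chi$. On the other side, $\mathfrak{A}_S\theta_S = \sum_T \zeta(\ZZ[G])\,\delta_T(0)\theta_S = \sum_T \zeta(\ZZ[G])\,\theta_S^T$, the sum running over all finite sets $T$ of places of $K$ with $\hyp(S,T)$; as $\zeta(\ZZ[G]) \subseteq \zeta(\mathfrak{M}(G))$ and the latter is a ring, Conjecture \ref{c7-conj:weak-integrality} is therefore equivalent to the assertion that $\theta_S^T(\chi) \in \overline{\ZZ}$ for every such $T$ and every $\chi \in \Irr_{\CC}(G)$ (equivalently, for one $\chi$ in each Galois orbit).

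The second step treats characters of degree one. If $\psi$ has degree one it is inflated from the abelian group $\overline G := G/\ker\psi = \Gal(L^{\ker\psi}/K)$, and since Artin $L$-functions and the factors $\delta_T(s,\cdot)$ depend only on the character, $\theta_S^T(L/K)(\psi) = \theta_S^T(L^{\ker\psi}/K)(\psi)$. The hypothesis $\hyp(S,T)$ descends to $L^{\ker\psi}/K$: every place ramifying in $L^{\ker\psi}/K$ ramifies in $L/K$, hence lies in $S$; $S \cap T = \emptyset$ is unchanged; and a root of unity of $L^{\ker\psi}$ that is $\equiv 1$ modulo $\mathfrak M_{L^{\ker\psi}}^T$ is, viewed in $L$, $\equiv 1$ modulo $\mathfrak M_L^T$, hence trivial because $E_{L,S}^T$ is torsionfree. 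Now \eqref{c7-eqn:integrality} applied to the abelian extension $L^{\ker\psi}/K$, together with $\delta_T(0) \in \Ann_{\ZZ[\overline G]}(\mu_{L^{\ker\psi}})$ (as in the proof of Corollary \ref{c7-cor:Brumer-re}), shows that $\theta_S^T(L^{\ker\psi}/K) \in \ZZ[\overline G]$, so $\theta_S^T(L/K)(\psi)$ is an algebraic integer.

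For a general $\chi$ I would invoke Brauer's induction theorem to write $\chi = \sum_j a_j\,\mathrm{Ind}_{H_j}^G\psi_j$ with $a_j \in \ZZ$ and each $\psi_j$ a degree-one character of a subgroup $H_j \le G$. Since Artin $L$-functions and the Euler factors $\delta_T(s,\cdot)$ are multiplicative in the character and compatible with induction, the corresponding identity for $\Theta_{S,T}(s)$ specializes at $s=0$ (after discarding the systematically vanishing parts, cf.\ Remark \ref{c7-rem:vanishing}) to
\[
\theta_S^T(L/K)(\chi)\ =\ \prod_j \theta_{S_j}^{T_j}\!\big(L/L^{H_j}\big)(\psi_j)^{a_j},
\]
where $S_j$, respectively $T_j$, denotes the set of places of $L^{H_j}$ lying above those in $S$, respectively $T$. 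Here $\hyp(S_j,T_j)$ holds for $L/L^{H_j}$ — the relevant unit group is still $E_{L,S}^T$, since the places of $L$ above $S_j$, respectively $T_j$, are exactly those above $S$, respectively $T$ — so the second step, applied with base field $L^{H_j}$, shows each factor $\theta_{S_j}^{T_j}(L/L^{H_j})(\psi_j)$ is an algebraic integer.

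The main obstacle is that the Brauer coefficients $a_j$ need not be non-negative, so the displayed product only exhibits $\theta_S^T(\chi)$, which lies in $\QQ(\chi)$ by Siegel's theorem \cite{c7-Si70}, as a \emph{quotient} of algebraic integers; one must still rule out a genuine denominator. I would attack this one prime $p$ at a time — only primes dividing $|G|\,w_L$ can intervene — taking the $H_j$ to be $p$-elementary and feeding in the congruences satisfied by the $p$-adic Stickelberger elements of the abelian subquotients $M_j/L^{H_j}$ with $M_j := L^{\ker\psi_j}$: these (a consequence of Deligne--Ribet) say that $\psi \equiv \psi' \bmod \mathfrak p^k$ forces $\theta_S^T(\psi) \equiv \theta_S^T(\psi') \bmod \mathfrak p^k$, and one would use them to make the alternating contributions of the $a_j$ combine into a $\mathfrak p$-integral element. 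This is the same bookkeeping that underlies the passage from the abelian main conjectures to the equivariant one in the work of Ritter--Weiss \cite{c7-RW11} and Kakde \cite{c7-Ka13}; here one needs only its commutative shadow — that $\theta_S^T$ lands in the centre of a maximal order — which should be comparatively tractable, but getting this cancellation to work, rather than the formal reductions above, is where the real difficulty lies, and is presumably why the statement is recorded here only as a conjecture.
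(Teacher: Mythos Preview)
The statement you are attempting to prove is recorded in the paper as a \emph{conjecture}; there is no proof of it in the paper in full generality, and indeed you acknowledge as much in your final sentence. What the paper does establish are special cases: the abelian case (Theorem~\ref{c7-thm:abelian-int}) and the monomial case (Theorem~\ref{c7-thm:maxint}).

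Your reduction to showing that each $\theta_S^T(\chi)$ is an algebraic integer is correct, as is your treatment of degree-one characters by descent to the abelian subextension $L^{\ker\psi}/K$. The Brauer-induction step is the natural continuation, and you have correctly located the genuine obstruction: the coefficients $a_j$ can be negative, so the multiplicative formula exhibits $\theta_S^T(\chi)$ only as a \emph{ratio} of algebraic integers. (There is also a secondary issue you gloss over with ``discarding the systematically vanishing parts'': when some factors $\theta_{S_j}^{T_j}(\psi_j)$ vanish while others appear with negative exponent, the product must be interpreted via leading terms and a matching of orders of vanishing; this is manageable but not entirely trivial.) Your proposed remedy via Deligne--Ribet-type congruences is plausible in spirit but is not carried out, and the conjecture remains open in general.

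It is worth noting that the paper's monomial result (Theorem~\ref{c7-thm:maxint}) succeeds precisely by sidestepping your obstacle: for a monomial group every irreducible $\chi$ is \emph{equal} to a single $\mathrm{Ind}_H^G\psi$ with $\psi$ linear, so one may take all $a_j \in \{0,1\}$ and no denominator ever appears. That is essentially as far as the paper --- and, to date, the literature --- goes on Conjecture~\ref{c7-conj:weak-integrality}.
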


\begin{remark}
	If $p$ is a prime, we let $\mathfrak{M}_p(G) := \ZZ_p \otimes
	\mathfrak{M}(G)$ which is a maximal $\ZZ_p$-order in $\QQ_p[G]$.
	As we have
	\[
	\mathcal{I}(G)\, =\, \zeta(\QQ[G]) \cap \bigcap_p \mathcal{I}^{}_p(G)
	\mbox{ and }
	\zeta(\mathfrak{M}(G)) = \zeta(\QQ[G]) \cap \bigcap_p \zeta(\mathfrak{M}^{}_p(G))
	\]
	the integrality conjectures of this section naturally decompose
	into local conjectures at each prime $p$. \exend
\end{remark}

\begin{example}
	Consider the Galois CM-extension $L/\QQ$ from Example \ref{c7-ex:Nomura}.
	As before let $S = S_{\ram} \cup S_{\infty}$ and $T = \left\{7\right\}$.
	We have seen that in this case $\theta_S^T$ does not lie in
	$\zeta(\ZZ_3[G])$. However, one has
	\[
	\theta_S^T\, =\, \nr\left((1-j)\left(-\frac{71}{2} + \frac{1}{2}\sigma
	-11 \sigma^2 + 19 \tau + \frac{13}{2}\sigma \tau +
	\frac{37}{2}\sigma^2 \tau\right)\right) \in \mathcal{I}_3(G).
	\]
\end{example}

\begin{theorem} \label{c7-thm:abelian-int}
	Conjecture \ref{c7-conj:integrality} and Conjecture \ref{c7-conj:weak-integrality} both hold when $L/K$ is abelian.
\end{theorem}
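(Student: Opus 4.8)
The plan is to reduce both conjectures, in the abelian case, to the classical near-integrality statement \eqref{c7-eqn:integrality}. First I would record what the relevant algebraic invariants become when $G$ is abelian: then $\QQ[G]$ is commutative, so $\mathfrak{M}(G)$ is simply the unique maximal $\ZZ$-order in $\QQ[G]$ and $\zeta(\mathfrak{M}(G)) = \mathfrak{M}(G)$. Moreover the commutator subgroup of $G$ is trivial, so Proposition \ref{c7-prop:best-denominators} gives $\mathcal{I}_p(G) = \zeta(\ZZ_p[G]) = \ZZ_p[G]$ for every prime $p$, whence $\mathcal{I}(G) = \ZZ[G]$. Since $\mathcal{I}(G) \subseteq \zeta(\mathfrak{M}(G))$ always holds, Conjecture \ref{c7-conj:weak-integrality} is an immediate consequence of Conjecture \ref{c7-conj:integrality}, so it suffices to prove the latter, i.e.\ the inclusion $\mathfrak{A}_S\theta_S \subseteq \ZZ[G]$.

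Next I would unwind the definition of $\mathfrak{A}_S$: by construction it is the $\zeta(\ZZ[G]) = \ZZ[G]$-submodule of $\mathfrak{M}(G)$ generated by the elements $\delta_T(0)$, where $T$ ranges over the finite sets of places of $K$ for which $\hyp(S,T)$ is satisfied. Since $\ZZ[G]$ is a ring and $\theta_S \in \QQ[G]$, the desired inclusion $\mathfrak{A}_S\theta_S \subseteq \ZZ[G]$ holds if and only if $\delta_T(0)\theta_S \in \ZZ[G]$ for each such $T$.

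Finally I would appeal to the identity $\delta_T(0)\theta_S = \theta_S^T$ together with the fact — already used in the proof of Corollary \ref{c7-cor:Brumer-re}, and ultimately resting on Lemma \ref{c7-lem:Coates} and the torsionfreeness of $E_{L,S}^T$ built into $\hyp(S,T)$ — that $\delta_T(0) \in \Ann_{\ZZ[G]}(\mu_L)$ whenever $\hyp(S,T)$ holds. The integrality statement \eqref{c7-eqn:integrality} then yields $\theta_S^T = \delta_T(0)\theta_S \in \Ann_{\ZZ[G]}(\mu_L)\theta_S \subseteq \ZZ[G] = \mathcal{I}(G)$, which is exactly what is required.

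I do not expect a genuine obstacle here: the substance is imported wholesale from \eqref{c7-eqn:integrality}, and everything else is bookkeeping with the commutative structure. The one point that must be handled carefully — rather than a hard step — is the identification of the two algebraic ingredients with their abelian shadows: that $\mathcal{I}(G)$ is precisely $\ZZ[G]$ (so that Conjecture \ref{c7-conj:integrality} asks for nothing beyond classical near-integrality) and that $\mathfrak{A}_S$ is spanned over $\ZZ[G]$ by the $\delta_T(0)$ and nothing larger. Once these are in place the statement is essentially a repackaging of \eqref{c7-eqn:integrality}.
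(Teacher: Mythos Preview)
Your proposal is correct and follows essentially the same route as the paper: identify $\mathcal{I}(G)=\ZZ[G]$ in the abelian case, relate $\mathfrak{A}_S$ to $\Ann_{\ZZ[G]}(\mu_L)$, and invoke \eqref{c7-eqn:integrality}. The only cosmetic difference is that the paper appeals to Lemma~\ref{c7-lem:Coates} to assert the full equality $\mathfrak{A}_S=\Ann_{\ZZ[G]}(\mu_L)$, whereas you establish just the inclusion $\mathfrak{A}_S\subseteq\Ann_{\ZZ[G]}(\mu_L)$ via $\delta_T(0)\in\Ann_{\ZZ[G]}(\mu_L)$; either suffices for the conclusion.
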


\begin{proof}
	Lemma \ref{c7-lem:Coates} implies that $\mathfrak{A}_{S} = \Ann_{\ZZ[G]}(\mu_{L})$.
	Then, the result follows from \eqref{c7-eqn:integrality}
	and the fact that $\mathcal{I}(G) = \ZZ[G]$ in this case.
\end{proof}

Recall that a finite group is called \emph{monomial}
\index{group!monomial} if each of
its irreducible characters is induced by a linear character 
of a subgroup. The class of monomial groups includes all
nilpotent groups \cite[Thm.~11.3]{c7-CR81} and, more generally, 
all supersoluble groups \cite[Ch.~2, Cor.~3.5]{c7-We82}.

\begin{theorem} \label{c7-thm:maxint}
	Let\/ $L/K$ be a Galois extension with Galois group\/ $G \simeq H \times C$,
	where\/ $H$ is monomial and\/ $C$ is abelian. 
	Let\/ $S$ be a finite set of places of\/ $K$ containing\/ $S_{\ram} \cup S_{\infty}$.
	Then, we have an inclusion
	\[
	\mathfrak{A}^{}_S \theta^{}_S\, \subseteq\, \zeta(\mathfrak{M}(H))[C].
	\]
	In particular, Conjecture \textnormal{\ref{c7-conj:weak-integrality}}
	is true for monomial extensions.
\end{theorem}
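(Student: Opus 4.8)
The plan is to treat $\theta_S^T(L/K)$ one Wedderburn block of $\zeta(\QQ[G])$ at a time, reducing each block to the abelian integrality statement \eqref{c7-eqn:integrality} by the inductivity of Artin $L$-functions, and to use the splitting $G \simeq H \times C$ to carry the abelian factor $C$ untouched through the whole argument. First I would reduce the theorem to the assertion that $\theta_S^T(L/K) \in \zeta(\mathfrak{M}(H))[C]$ for every finite set $T$ of places of $K$ with $\hyp(S,T)$: this suffices because $\mathfrak{A}_S \theta_S$ is the $\zeta(\ZZ[G])$-module generated by the elements $\delta_T(0)\theta_S = \theta_S^T$, and because $\zeta(\ZZ[G]) = \zeta(\ZZ[H])[C]$ is contained in the ring $\zeta(\mathfrak{M}(H))[C] = \zeta(\mathfrak{M}(H)) \otimes_{\ZZ} \ZZ[C]$. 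Writing $\zeta(\QQ[G]) = \prod_{[\chi]} \QQ(\chi)[C]$, with $[\chi]$ running over the $\QQ$-irreducible characters of $H$ and $\QQ(\chi)$ the field generated by the values of $\chi$, one has $\zeta(\mathfrak{M}(H))[C] = \prod_{[\chi]} \mathcal{O}_{\QQ(\chi)}[C]$; so it is enough to show that for each $\chi \in \Irr_{\CC}(H)$ the $[\chi]$-block $z^{(\chi)}$ of $\theta_S^T(L/K)$ lies in $\mathcal{O}_{\QQ(\chi)}[C]$.

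So fix $\chi \in \Irr_{\CC}(H)$. Since $H$ is monomial, write $\chi = \mathrm{Ind}_U^H \psi$ with $\psi$ a linear character of a subgroup $U \leq H$, and set $W := \ker(\psi)$. Inside $V := U \times C \leq G$ the subgroup $W$ is normal with abelian quotient $\overline{V} := V/W = (U/W) \times C$; since $L/K$ is Galois, $L/L^V$ is Galois with group $V$ and $L^W/L^V$ is abelian with group $\overline{V}$. For every $\rho \in \Irr_{\CC}(C)$ the character $\chi \otimes \rho$ of $G$ equals $\mathrm{Ind}_V^G(\psi \otimes \rho)$, and $\psi \otimes \rho$ is inflated from the character $\overline{\psi} \otimes \rho$ of $\overline{V}$, where $\overline{\psi}$ is the faithful character of $U/W$ induced by $\psi$. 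The functions $\Theta_{S,T}(s)$ are inductive along $\mathrm{Ind}_V^G$ and invariant under inflation along $V \twoheadrightarrow \overline{V}$, both Euler factor by Euler factor, with $S$ and $T$ transforming into the sets $S'$ and $T'$ of places of $L^V$ above $S$ and $T$; hence the $(\chi \otimes \rho)$-component of $\theta_S^T(L/K)$ equals the $(\overline{\psi} \otimes \rho)$-component of $\theta_{S'}^{T'}(L^W/L^V)$. Here $\hyp(S',T')$ holds for $L^W/L^V$ — the only non-formal point being that $E_{L^W,S'}^{T'}$ embeds into $E_{L,S}^T$ and is hence torsionfree — so by \eqref{c7-eqn:integrality}, together with the fact used in the proof of Corollary \ref{c7-cor:Brumer-re} that $\delta_{T'}(0) \in \Ann_{\ZZ[\overline{V}]}(\mu_{L^W})$ when $\hyp(S',T')$ holds, we obtain $\theta_{S'}^{T'}(L^W/L^V) = \delta_{T'}(0)\theta_{S'}(L^W/L^V) \in \ZZ[\overline{V}] = \ZZ[U/W][C]$. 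Writing this as $\sum_{c \in C} y_c\, c$ with $y_c \in \ZZ[U/W]$, its $(\overline{\psi} \otimes \rho)$-component is $\sum_{c \in C} \overline{\psi}(y_c)\rho(c)$ with $\overline{\psi}(y_c)$ an algebraic integer in the cyclotomic field $\QQ(\psi)$. The point is that $U$, $W$, $L^V$, $L^W$, $S'$, $T'$, and hence the $y_c$, depend only on $\chi$ and not on $\rho$; moreover a Galois conjugate $\chi^{\sigma}$ admits the same monomial data with $\psi$ replaced by $\psi^{\sigma}$, so $\overline{\psi}(y_c)$ is replaced by $\sigma(\overline{\psi}(y_c))$.

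It then remains to assemble this over $\sigma$ and $\rho$. Put $\beta_c := \overline{\psi}(y_c) \in \mathcal{O}_{\QQ(\psi)}$ and $\widehat{z} := \sum_{c \in C} \beta_c\, c \in \mathcal{O}_{\QQ(\psi)}[C]$. Since $\QQ(\chi) \subseteq \QQ(\psi)$, both $z^{(\chi)}$ and $\widehat{z}$ lie in $\QQ(\psi)[C]$, and I claim they coincide: after applying $- \otimes_{\QQ} \QQ^{c}$ one identifies $\QQ(\psi)[C] \otimes_{\QQ} \QQ^{c} \simeq \prod_{\tau} \prod_{\rho} \QQ^{c}$, and the $(\tau,\rho)$-entry of $\widehat{z}$, namely $\sum_c \tau(\beta_c)\rho(c)$, equals by the previous paragraph the $(\chi^{\tau} \otimes \rho)$-component of $\theta_S^T(L/K)$, hence also the corresponding coordinate of $z^{(\chi)}$. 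As $z^{(\chi)}$ already lies in the subring $\QQ(\chi)[C]$, writing $z^{(\chi)} = \sum_c b_c\, c$ with $b_c \in \QQ(\chi)$ forces $b_c = \beta_c \in \QQ(\chi) \cap \mathcal{O}_{\QQ(\psi)} = \mathcal{O}_{\QQ(\chi)}$, that is, $z^{(\chi)} \in \mathcal{O}_{\QQ(\chi)}[C]$. This proves $\theta_S^T(L/K) \in \zeta(\mathfrak{M}(H))[C]$ and hence $\mathfrak{A}_S \theta_S \subseteq \zeta(\mathfrak{M}(H))[C]$; taking $C = 1$, so that $G$ itself is monomial, and using $\zeta(\mathfrak{M}(H))[1] = \zeta(\mathfrak{M}(G))$ yields Conjecture \ref{c7-conj:weak-integrality} for monomial extensions.

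I expect the last assembly step to be the real obstacle, and it is exactly why the hypothesis reads $G \simeq H \times C$ rather than ``$G$ monomial'': merely knowing that each irreducible component of $\theta_S^T(L/K)$ is an algebraic integer is strictly weaker than membership in $\zeta(\mathfrak{M}(H))[C]$ — one would only obtain the coefficients of the $\ZZ[C]$-expansion in $\tfrac{1}{|C|}\mathcal{O}_{\QQ(\chi)}$ — so the reduction must be run so that the character $\rho$ of $C$ is never mixed into the monomial induction. This is possible precisely because the monomial presentation of $H$ contributes only subgroups of the shape $U \times C$ inside $G$, keeping $C$ an honest direct factor through which the descent identifying $z^{(\chi)}$ with $\widehat{z}$ can be performed. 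For a general monomial $G$ the inducing subgroups need not have this shape, and the method then gives only the weak integrality conjecture.
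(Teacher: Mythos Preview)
Your proposal is correct and follows precisely the strategy that the paper indicates (the paper itself does not give a proof but cites \cite{c7-Ni16} and summarises: ``The proof heavily relies on the abelian case and functoriality of Artin $L$-functions''). Your reduction of each $[\chi]$-block of $\theta_S^T(L/K)$ via the monomial presentation $\chi = \mathrm{Ind}_U^H\psi$, together with inflation through $W = \ker\psi$ and the observation that the subgroup $V = U \times C$ keeps the abelian factor $C$ intact, is exactly the intended mechanism; the verification that $\hyp(S',T')$ transfers to $L^W/L^V$, the use of \eqref{c7-eqn:integrality} for the abelian extension $L^W/L^V$, and the Galois-descent step $\QQ(\chi)\cap\mathcal{O}_{\QQ(\psi)} = \mathcal{O}_{\QQ(\chi)}$ are all sound.
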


\begin{proof}
	This is due to the author \cite[Thm.~1.2]{c7-Ni16}.
	The proof heavily relies on the abelian case and functoriality
	of Artin $L$-functions.
\end{proof}

For non-abelian extensions, unconditional results 
on Conjecture \ref{c7-conj:integrality} are 
rather sparse. Here we only mention the following
special case of \cite[Cor.~5.12]{c7-Ni16}.

\begin{corollary}
	Let\/ $\ell$ be an odd prime.
	Let\/ $L/K$ be a Galois CM-extension with Galois group isomorphic
	to $D_{4\ell}$, the dihedral group of order\/ $4 \ell$.
	\index{group!dihedral}
	Then, Conjecture \textnormal{\ref{c7-conj:integrality}} holds.
\end{corollary}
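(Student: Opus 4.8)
Write $G\cong D_{4\ell}=\langle r,s\mid r^{2\ell}=s^2=1,\ srs^{-1}=r^{-1}\rangle$. Since $L/K$ is a CM-extension, complex conjugation $j$ is the unique central element of order $2$, so $j=r^\ell$, $G=D_{2\ell}\times\langle j\rangle$ (recall $\ell$ is odd), and $[G,G]=\langle r^2\rangle$ is cyclic of order $\ell$. As $D_{4\ell}$ is supersoluble it is monomial \cite[Ch.~2, Cor.~3.5]{c7-We82}, so Theorem \ref{c7-thm:maxint} (with $H=G$, $C=1$) already yields $\mathfrak{A}_S\theta_S\subseteq\zeta(\mathfrak{M}(G))$, i.e.\ Conjecture \ref{c7-conj:weak-integrality}. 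Using the decomposition of $\mathcal{I}(G)$ into its local components, together with Siegel's rationality $\mathfrak{A}_S\theta_S\subseteq\zeta(\QQ[G])$, it suffices to show $\theta_S^T\in\mathcal{I}_p(G)$ for every prime $p$ and every finite set $T$ of places of $K$ with $\hyp(S,T)$ (the module $\mathfrak{A}_S\theta_S$ being the $\zeta(\ZZ[G])$-module generated by these elements); and by what has just been said it is enough to know that $\mathcal{I}_p(G)=\zeta(\mathfrak{M}_p(G))$, which I claim holds for all $p\neq 2$.

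For $p\nmid|G|$, i.e.\ $p\notin\{2,\ell\}$, the group ring $\ZZ_p[G]$ is a maximal order and there is nothing to do. For $p=\ell$, since $2$ is a unit in $\ZZ_\ell$ there are ring decompositions $\ZZ_\ell[G]=\ZZ_\ell[D_{2\ell}]e^+\times\ZZ_\ell[D_{2\ell}]e^-$ and $\mathfrak{M}_\ell(G)=\mathfrak{M}_\ell(D_{2\ell})e^+\times\mathfrak{M}_\ell(D_{2\ell})e^-$ with $e^{\pm}=\tfrac12(1\pm j)$; as the formation of $\mathcal{I}(-)$ commutes with finite products, the dihedral computation recalled in Example \ref{c7-ex:dihedral} gives $\mathcal{I}_\ell(G)=\mathcal{I}_\ell(D_{2\ell})e^+\times\mathcal{I}_\ell(D_{2\ell})e^-=\zeta(\mathfrak{M}_\ell(G))$. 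In both cases the inclusion furnished by Theorem \ref{c7-thm:maxint} does the job.

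It remains to treat $p=2$, which is the real point. Here $2\nmid\ell=|[G,G]|$, so Proposition \ref{c7-prop:best-denominators} yields $\mathcal{I}_2(G)=\zeta(\ZZ_2[G])$, and this is strictly smaller than $\zeta(\mathfrak{M}_2(G))$ -- so Theorem \ref{c7-thm:maxint} is no longer enough, and one must establish the genuine $2$-integrality $\theta_S^T\in\zeta(\ZZ_2[G])$ for every $T$ with $\hyp(S,T)$. The plan is to descend to abelian subextensions and invoke the abelian integrality \eqref{c7-eqn:integrality} of \cite{c7-CN79,c7-DR80,c7-Ba78}. In the CM case $\theta_S^T$ is supported on the odd irreducible characters of $G$ (Remark \ref{c7-rem:vanishing}); the two odd linear ones factor through $\Gal(L^{[G,G]}/K)\cong(\ZZ/2)^2$, while every odd two-dimensional one has the form $\mathrm{Ind}_C^G\psi$ for a linear character $\psi$ of the cyclic index-two subgroup $C=\langle r\rangle$. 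Since $j\in C$, the field $L^C$ is totally real and $L/L^C$ is an abelian CM-extension, and $L_S(s,\mathrm{Ind}_C^G\psi)$ is the Artin $L$-function of $\psi$ over $L^C$ by inductivity; hence these $\chi$-components of $\theta_S^T(L/K)$ are recovered from the abelian Stickelberger element of $L/L^C$, which lies in $\ZZ[C]$. One then reassembles $\theta_S^T(L/K)$ from these two families of abelian data: the linear characters contribute only denominators that are powers of $\ell$ (from the idempotent $\tfrac1\ell\sum_{h\in[G,G]}h$), hence $2$-adic units, whereas the two-dimensional characters a priori contribute a factor $\tfrac1{[G:C]}=\tfrac12$.

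Showing that this last factor of $2$ cancels -- equivalently, that the odd component of $\theta_S^T$, viewed in $\zeta(\QQ_2[D_{2\ell}])$ through $e^-$, already lies in $2\,\zeta(\ZZ_2[D_{2\ell}])$ -- is the main obstacle. One expects to extract the missing divisibility by $2$ from the facts that $\theta_S^T=\delta_T(0)\theta_S$ with $\delta_T(0)\in\Ann_{\ZZ[G]}(\mu_L)$ and that $w_L=|\mu_L|$ is even; combined with a careful bookkeeping over the central idempotents $e_\chi$ this should give $\theta_S^T\in\zeta(\ZZ_2[G])=\mathcal{I}_2(G)$. This $2$-adic analysis is carried out in \cite[Cor.~5.12]{c7-Ni16}, and assembling the three cases completes the proof.
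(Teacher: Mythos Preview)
Your handling of odd primes is correct and essentially matches the paper. The problem is at $p=2$: you do not actually prove $\theta_S^T\in\zeta(\ZZ_2[G])$. You sketch an induction-from-$C=\langle r\rangle$ argument, correctly observe that a residual factor $\tfrac12$ obstructs it, speculate that the parity of $w_L$ ``should'' cancel it, and then defer the entire point to \cite[Cor.~5.12]{c7-Ni16}. Since the corollary is meant to be derived from the results already assembled in this survey, that is a gap rather than a proof.

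The paper's $2$-adic argument avoids your obstacle entirely, and the key is the choice of factorisation in Theorem~\ref{c7-thm:maxint}. You applied it with $H=G$, $C=1$, obtaining only $\mathfrak{A}_S\theta_S\subseteq\zeta(\mathfrak{M}(G))$; the paper instead takes $H=D_{2\ell}$ and $C=\langle j\rangle\simeq C_2$, which gives the genuinely sharper inclusion $\mathfrak{A}_S\theta_S\subseteq\zeta(\mathfrak{M}(D_{2\ell}))[C_2]$ (strictly smaller at $p=2$ than $\zeta(\mathfrak{M}_2(G))$). Now let $N$ be the commutator subgroup of $D_{2\ell}$, of order $\ell$; since $\ZZ_2[D_{2\ell}]$ is $N$-hybrid it splits as $\ZZ_2[D_{2\ell}/N]\times M$ with $M$ maximal and $D_{2\ell}/N\simeq C_2$, whence $\mathcal{I}_2(D_{4\ell})=\zeta(\ZZ_2[D_{2\ell}])[C_2]$ decomposes accordingly. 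On the $M$-factor the inclusion from Theorem~\ref{c7-thm:maxint} already lands in $\zeta(M)[C_2]$. On the $e_N$-factor the image of $\theta_S^T(L/K)$ is $\theta_S^T(L^N/K)$ for the \emph{abelian} extension $L^N/K$ with group $C_2\times C_2$, so Theorem~\ref{c7-thm:abelian-int} finishes. No factor of $\tfrac12$ ever appears: the trick is to \emph{deflate} to the abelian quotient $G/N$ via the hybrid splitting, not to induce from an index-$2$ subgroup.
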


\begin{proof}
	We first note that $D_{4\ell} \simeq D_{2\ell} \times C_2$
	with $C_2 := \ZZ / 2\ZZ$ 
	and that dihedral groups are monomial.
	Taking Example \ref{c7-ex:dihedral} into account,
	we see that the $p$-part of Conjecture \ref{c7-conj:integrality}
	directly follows from Theorem \ref{c7-thm:maxint} if $p$ is odd.
	Now consider the case $p=2$.
	Let $N$ be the commutator subgroup of $D_{2\ell}$
	so that $D_{2\ell} / N \simeq C_2$. It follows
	from \cite[Proposition 2.13]{c7-JN16} that the group ring
	$\ZZ_2[D_{2\ell}]$ is `$N$-hybrid'
	\index{hybrid group ring} meaning that
	it decomposes into a direct product of
	$\ZZ_2[D_{2\ell}/N] \simeq \ZZ_2[C_2]$
	and some maximal order (see Definition
	\ref{c7-def:hybrid} below).
	As 
	\[
	\mathcal{I}^{}_2(D^{}_{4\ell})\, =\, \zeta(\ZZ^{}_2[D^{}_{4\ell}])\,
	=\, \zeta(\ZZ^{}_2[D^{}_{2\ell}])[C^{}_2],
	\]
	the result follows by combining
	Theorems \ref{c7-thm:maxint} and \ref{c7-thm:abelian-int}.
\end{proof}

We put $\omega_L := \nr (|\mu_L|) = \nr(w_L)$.

\begin{proposition} \label{c7-prop:omega-int}
	Let\/ $S$ be a finite set of places of\/ $K$ containing\/ $S_{\ram} \cup S_{\infty}$. 
	\begin{enumerate}
		\item 
		Suppose that Conjecture \textnormal{\ref{c7-conj:integrality}} holds. Then,
		\[
		\omega_L \theta_S \in \mathcal{I}(G).
		\]
		\item
		Suppose that Conjecture \textnormal{\ref{c7-conj:weak-integrality}} holds. 
		Then,
		\[
		\omega_L \theta_S \in \zeta(\mathfrak{M}(G)).
		\]
	\end{enumerate}
\end{proposition}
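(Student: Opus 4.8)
The plan is to reduce the statement to the integrality conjectures via Lemma~\ref{c7-lem:Coates}. The key observation is that $\omega_L = \nr(w_L)$ and that $w_L$, viewed as an element of $\zeta(\ZZ[G])$, lies in the annihilator $\Ann_{\ZZ[G]}(\mu_L)$; by the second part of Lemma~\ref{c7-lem:Coates} it is in fact the greatest common divisor of the integers $1 - N(v)$ as $v$ runs over totally decomposed primes with $\hyp(S, T_v)$ satisfied. Concretely, one can find finitely many such totally decomposed primes $v_1, \dots, v_r$ together with integers $a_1, \dots, a_r$ with $w_L = \sum_i a_i(1 - N(v_i))$. For a totally decomposed prime $v$ the Frobenius $\phi_w$ acts trivially on the relevant invariants, so $\delta_{T_{v}}(0) = \nr(1 - N(v)\phi_w^{-1}) = \nr(1 - N(v)) = 1 - N(v)$ as an element of $\zeta(\ZZ[G])$ (here $1-N(v)$ is a rational integer, and $\nr$ fixes it). Hence $w_L = \sum_i a_i \delta_{T_{v_i}}(0) \in \mathfrak{A}_S$, and applying $\nr$ gives $\omega_L \in \mathfrak{A}_S$ as well (indeed $\omega_L = w_L$ here since $w_L$ is a rational integer).

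With this in hand both parts are immediate. For part (1): since $\omega_L = w_L \in \mathfrak{A}_S$, we have $\omega_L \theta_S \in \mathfrak{A}_S \theta_S \subseteq \mathcal{I}(G)$ by Conjecture~\ref{c7-conj:integrality}. For part (2): the same containment $\omega_L \theta_S \in \mathfrak{A}_S \theta_S \subseteq \zeta(\mathfrak{M}(G))$ follows from Conjecture~\ref{c7-conj:weak-integrality}. So the proposition is just the specialisation of the integrality conjectures to the particular element $w_L$ of $\mathfrak{A}_S$.

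The only point requiring genuine care — and hence the main obstacle — is verifying that $w_L$ really does lie in $\mathfrak{A}_S$, i.e.\ that one may restrict attention to \emph{totally decomposed} primes $v$ for which additionally $\hyp(S, T_v)$ holds, and that the gcd of the associated integers $1 - N(v)$ is exactly $w_L$ rather than some proper multiple or divisor. This is precisely the content of the second assertion of Lemma~\ref{c7-lem:Coates}, which we are entitled to assume. One should also check the compatibility of the $\zeta(\ZZ[G])$-module structure: $\mathfrak{A}_S$ is defined as the $\zeta(\ZZ[G])$-span of the $\delta_T(0)$, and since the integers $a_i$ lie in $\ZZ \subseteq \zeta(\ZZ[G])$, the $\ZZ$-linear combination $\sum_i a_i \delta_{T_{v_i}}(0)$ is a legitimate element of $\mathfrak{A}_S$. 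Finally, note there is no circularity: we do not need $\theta_S$ itself to have integral or even central-order coefficients — that is exactly what the hypothesis (either conjecture) supplies — we only feed in the harmless element $w_L \in \mathfrak{A}_S$.
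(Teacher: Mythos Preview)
Your argument contains a genuine error: the reduced norm does \emph{not} fix rational integers when $G$ is non-abelian. For $n \in \ZZ$ viewed as $n\cdot 1 \in \ZZ[G]$, the $\chi$-component of $\nr(n)$ is $n^{\chi(1)}$, so $\nr(n) = n$ in $\zeta(\QQ[G])$ only if every irreducible character of $G$ has degree~$1$, i.e.\ only if $G$ is abelian. Consequently, for a totally decomposed prime $v$ one has $\delta_{T_v}(0) = \nr(1 - N(v))$, but this is \emph{not} the integer $1 - N(v)$; likewise $\omega_L = \nr(w_L) \neq w_L$ in general. Your identity $w_L = \sum_i a_i(1 - N(v_i))$ therefore does not yield $\omega_L = \sum_i a_i\,\delta_{T_{v_i}}(0)$, because $\nr$ is multiplicative rather than additive. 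Hence the key step ``$\omega_L \in \mathfrak{A}_S$'' is unjustified, and there is no obvious reason it should hold.

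The paper's proof circumvents this by working one prime $p$ at a time and exploiting multiplicativity. By the second part of Lemma~\ref{c7-lem:Coates} one can choose, for each $p$, a \emph{single} totally decomposed place $v_0$ with $w_L = (1 - N(v_0))\cdot c$ for some $c \in \ZZ_p^{\times}$. Then
\[
\omega_L \,=\, \nr(1 - N(v_0))\cdot \nr(c) \,=\, \delta_{T_0}(0)\cdot \nr(c),
\qquad\text{so}\qquad
\omega_L\,\theta_S \,=\, \nr(c)\cdot \theta_S^{T_0}.
\]
The hypothesis (Conjecture~\ref{c7-conj:integrality} or~\ref{c7-conj:weak-integrality}) places $\theta_S^{T_0}$ in $\mathcal{I}_p(G)$, respectively $\zeta(\mathfrak{M}_p(G))$; and $\nr(c)$ lies in $\mathcal{I}_p(G)$ since $c \in \ZZ_p[G]$. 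As both targets are rings, the product lies there as well, and intersecting over all $p$ gives the global conclusion. The point is that one never needs $\omega_L \in \mathfrak{A}_S$; one only needs $\omega_L\theta_S$ to be a product of $\theta_S^{T_0}$ with something already in the integrality ring.
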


\begin{proof}
	It suffices to show that
	$\omega_L \theta_S \in \mathcal{A}_p(G)$ 
	for each prime $p$, where $\mathcal{A}_p(G) = \mathcal{I}_p(G)$
	in case (1) and $\mathcal{A}_p(G) = \zeta(\mathfrak{M}_p(G))$
	in case (2).
	By Lemma \ref{c7-lem:Coates}, there is a totally decomposed place
	$v_0$ of $K$ (in fact infinitely many places) such that 
	$|\mu_L| = (1-N(v_0)) \cdot c$, where $c$ is a unit in $\ZZ_p$,
	and such that $\hyp(S,T_0)$ is satisfied with $T_0 := \{v_0\}$.
	As $\nr(c)$ belongs to $\mathcal{A}_p(G)$, we have
	\[
	\omega_L \theta_S = \nr(c) \theta_S^{T_0} \in \mathcal{A}_p(G)
	\]
	as desired.
\end{proof}

\subsection{The non-abelian Brumer and Brumer--Stark conjectures} 
The following conjecture was first formulated in \cite{c7-Ni11} and is a non-abelian generalisation of Brumer's Conjecture \ref{c7-conj:abelian-Brumer}.

\begin{conjecture}[$B(L/K,S)$] \label{c7-conj:Brumer}
	\index{Brumer's conjecture!non-abelian} \index{class group}
	Let\/ $S$ be a finite set of places of\/ $K$ containing\/ $S_{\ram} \cup S_{\infty}$.
	Then, for each\/ $x \in \mathcal H(G)$, we have
	\[
	x \cdot \mathfrak{A}^{}_S \theta^{}_S\, \subseteq\, \Ann^{}_{\zeta(\ZZ[G])} (\cl^{}_L).
	\]
\end{conjecture}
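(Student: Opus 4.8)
\medskip

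The plan is to deduce the conjecture from equivariant Iwasawa theory together with the formalism of non-commutative Fitting invariants, just as Brumer's original conjecture follows for abelian $L$ from the Iwasawa main conjecture. I would first reduce prime by prime: $\mathcal{I}(G)$ decomposes as $\zeta(\QQ[G]) \cap \bigcap_p \mathcal{I}_p(G)$ by the remark above and $\mathcal{H}(G)$ decomposes likewise, so, since $\cl_L$ is finite, it is enough to show for every prime $p$ and every $x \in \mathcal{H}_p(G)$ that $x\,\theta_S^T$ annihilates $\ZZ_p \otimes \cl_L$ whenever $\hyp(S,T)$ holds; this covers the conjecture because $\mathfrak{A}_S \theta_S$ is generated over $\zeta(\ZZ[G])$ by the elements $\theta_S^T = \delta_T(0)\theta_S$. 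Since $L/K$ is a CM-extension, $\theta_S^T$ lies in the minus part $(1-j)\zeta(\QQ[G])$, so for odd $p$ only the minus part of the $p$-part of $\cl_L$ is at stake, while $p=2$ I would set aside for separate treatment.

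\medskip

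The analytic-arithmetic link is the equivariant main conjecture of Ritter--Weiss and Kakde, available under the hypothesis that the Iwasawa $\mu$-invariant of $L_\infty/L$ vanishes. Writing $\mathcal{G} := \Gal(L_\infty/K)$, $\Gamma := \Gal(L_\infty/L) \cong \ZZ_p$, and $X_\infty$ for the Galois group over $L_\infty$ of the maximal abelian pro-$p$ extension unramified outside $S$, the main conjecture identifies the non-abelian $p$-adic $L$-function (a canonical element of a relative $K_1$-group of the Iwasawa algebra $\Lambda(\mathcal{G}) := \ZZ_p[[\mathcal{G}]]$) with a characteristic element of the complex computing $X_\infty$; passing through the reduced norm and the localisation sequence, this places the $p$-adic $L$-function in $\mathcal{I}(\Lambda(\mathcal{G})) \cdot \mathrm{Fitt}^{\max}$ of that module. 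I would then descend to $L$ by taking $\Gamma$-coinvariants, using $\mu = 0$ a second time so that $X_\infty$ is finitely generated over $\ZZ_p$ and the descent is exact with no pseudo-null error terms; genus theory relates the coinvariants to $\ZZ_p \otimes \cl_L$, the ramification contributions accounting precisely for the Euler factors $\delta_T(0)$ and for the requirement $S \supseteq S_{\ram}$, while the interpolation property identifies the specialised $L$-value with $\theta_S^T$ up to the Euler factors at the places above $p$. Controlling those last factors --- showing they do not become zero-divisors in $\zeta(\ZZ_p[G])$ --- is the analytic heart of the matter, and is closely tied to Gross's conjecture on $p$-adic Artin $L$-series at $s=0$, on which the work of Spiess, Burns, and Dasgupta--Kakde--Ventullo provides what is needed. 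The outcome is a containment $\theta_S^T \in \mathcal{I}_p(G) \cdot \mathrm{Fitt}^{\max}_{\ZZ_p[G]}(C_p)$ for an explicit finite module $C_p$ built from $\ZZ_p \otimes \cl_L$ --- essentially the Pontryagin dual of a ray class group cut to its minus part, sitting in the fundamental exact sequence relating ray class groups to $\cl_L$.

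\medskip

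The conclusion is then pure algebra. By the defining property of the denominator ideal one has $\mathcal{H}_p(G) \cdot \mathcal{I}_p(G) = \mathcal{H}_p(G)$ and $\mathcal{H}_p(G) \cdot \mathrm{Fitt}^{\max}_{\ZZ_p[G]}(C_p) \subseteq \Ann_{\ZZ_p[G]}(C_p)$, so $x\,\theta_S^T$ annihilates $C_p$ for every $x \in \mathcal{H}_p(G)$; the same containment forces $\theta_S^T \in \mathcal{I}_p(G)$, hence $x\,\theta_S^T \in \mathcal{H}_p(G) \subseteq \zeta(\ZZ_p[G])$, which supplies the integrality the statement tacitly requires. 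Transferring annihilation from $C_p$ to $\ZZ_p \otimes \cl_L$ uses the fundamental exact sequence and the duality that produced $C_p$; here the fact that $\theta_S^T$ already lies in the minus part keeps the connecting maps from obstructing the transfer. Reassembling over all primes $p$ and all admissible $T$ gives $x \cdot \mathfrak{A}_S \theta_S \subseteq \Ann_{\zeta(\ZZ[G])}(\cl_L)$. (Burns's route repackages the same Iwasawa input as a case of the equivariant Tamagawa number conjecture of Burns--Flach and reads off Brumer and Brumer--Stark from it; the two are equivalent for this purpose.)

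\medskip

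I expect two obstacles to be decisive. The first is that the argument is conditional throughout on the vanishing of the Iwasawa $\mu$-invariant, which is open in general, so unconditionally one obtains the conjecture only where $\mu = 0$ is known --- for instance when $L$ is abelian over $\QQ$, by Ferrero--Washington. The second is the prime $p = 2$: both the equivariant main conjecture and the passage from Fitting invariants to annihilators are more delicate there --- the $\Aff(q)$ and $S_4$ examples above already show that $\mathcal{H}_2$ and $\mathcal{I}_2$ can strictly exceed the central group ring, and $2$-adically the minus part is not cut out by an idempotent --- so the $2$-primary part would need a separate, more careful descent; in practice one proves a clean statement away from $2$ and only a weaker one at $2$.
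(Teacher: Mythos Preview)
The statement you are addressing is a \emph{conjecture}, not a theorem: the paper does not prove $B(L/K,S)$ in general, and neither does your proposal. What the paper offers in \S\ref{c7-sec:results} is exactly the conditional strategy you have sketched --- reduce prime by prime, pass to the minus part for odd $p$, invoke the equivariant Iwasawa main conjecture of Ritter--Weiss and Kakde, descend via non-commutative Fitting invariants, and use Gross's conjecture to handle the Euler factors at $p$ --- and records the outcome as conditional results (Theorems~\ref{c7-thm:Brumer-Stark} and~\ref{c7-thm:ETNC-implies-BS} for $B$ and $BS$, Theorem~\ref{c7-thm:SS-implies-BSw} for the weak versions) together with a handful of unconditional special cases (Theorem~\ref{c7-thm:unconditional-BS}, Corollary~\ref{c7-cor:BS-Frob}). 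Your outline follows this route faithfully, and you correctly flag the two genuine obstructions: the dependence on $\mu=0$ and the difficulties at $p=2$.

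One point deserves sharper emphasis. The Iwasawa descent you describe naturally forces $S_p \subseteq S$, and this is precisely the hypothesis in Theorem~\ref{c7-thm:Brumer-Stark}; removing $S_p$ is not merely a matter of controlling zero-divisors but requires the full strength of Gross's order-of-vanishing conjecture (Conjecture~\ref{c7-conj:Gross-I}), which in turn feeds into the minus $p$-part of the ETNC as in Theorem~\ref{c7-thm:ETNC-implies-BS}. So your sentence ``closely tied to Gross's conjecture'' understates the dependence: without Gross, the method yields $B(L/K,S,p)$ only for $S \supseteq S_p$, which is strictly weaker than the conjecture as stated. In summary, there is no proof in the paper to compare against; your proposal is an accurate account of the known conditional strategy, with its limitations honestly acknowledged.
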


\begin{remark}
	Suppose that $G$ is abelian. As we have already observed,
	Lemma \ref{c7-lem:Coates} implies that 
	$\mathfrak{A}_{S} = \Ann_{\ZZ[G]}(\mu_{L})$ in this case.
	Since we have $\mathcal{H}(G) = \ZZ[G]$, Conjecture
	\ref{c7-conj:Brumer} recovers Brumer's Conjecture
	\ref{c7-conj:abelian-Brumer}. \exend
\end{remark}

\begin{remark}
	When Conjecture \ref{c7-conj:integrality} holds, then
	$x \cdot \mathfrak{A}_S \theta_S$ is at least contained in
	$\zeta(\ZZ[G])$ for each $x \in \mathcal H(G)$. \exend
\end{remark}

\begin{remark} 
	If $M$ is a finitely generated $\ZZ$-module and $p$ is a prime, we define its $p$-part to be $M(p) := \ZZ_{p} \otimes M$.
	Replacing the class group $\cl_{L}$ by $\cl_{L}(p)$ for each prime $p$,
	Conjecture $B(L/K,S)$ naturally decomposes into local conjectures $B(L/K,S,p)$.
	It is then possible to replace $\mathcal{H}(G)$ by $\mathcal{H}_{p}(G)$ 
	(see \cite[Lemma 1.4]{c7-Ni11}).
	Moreover, if $p$ does not divide the order of the commutator subgroup of $G$ then 
	$\mathcal{H}_{p}(G) = \mathcal{I}_{p}(G) = \zeta(\ZZ_{p}[G])$ 
	by Proposition \ref{c7-prop:best-denominators} and so granting 
	the hypotheses on $S$ the statement of the local conjecture simplifies to
	\[
	\mathfrak{A}^{}_S \theta^{}_S\, \subseteq\, \Ann^{}_{\zeta(\ZZ^{}_{p}[G])} (\cl^{}_{L}(p)).
	\]
\end{remark}
\vspace*{-4mm} \hfill \exend

\begin{remark}
	Burns \cite{c7-Bu11} has also formulated a conjecture which generalises many refined Stark conjectures to the
	non-abelian situation. In particular, it implies Conjecture \ref{c7-conj:Brumer} (see \cite[Prop.~3.5.1]{c7-Bu11}).
	A further approach to non-abelian Brumer and Brumer--Stark conjectures
	is due to Dejou and Roblot \cite{c7-DR14}.  \exend
\end{remark}

Recall that $\omega_L := \nr (|\mu_L|)$. The following is a non-abelian generalisation of the Brumer--Stark Conjecture \ref{c7-conj:abelian-BS}.

\begin{conjecture}[$BS(L/K,S)$] \label{c7-conj:Brumer--Stark}
	\index{Brumer--Stark conjecture!non-abelian}
	Let $S$ be a finite set of places of $K$ containing $S_{\ram} \cup S_{\infty}$.
	Then for each $x \in \mathcal H(G)$ we have
	$x \cdot \omega_L \cdot \theta_S \in \zeta(\ZZ[G])$.
	Moreover, for each non-zero fractional ideal $\mathfrak{a}$ of $L$,
	there is an anti-unit $\epsilon = \epsilon(x,\mathfrak{a},S) \in L^{\times}$ such that
	\[
	\mathfrak{a}^{x \cdot \omega_L \cdot \theta_S}\, =\, (\epsilon)
	\]
	and for each finite set $T$ of primes of $K$ such that $\hyp(S \cup S_{\epsilon},T)$ is satisfied there is an $\epsilon_{T} \in E_{L,S_{\epsilon}}^{T}$
	such that
	\begin{equation*} 
	\epsilon^{z \cdot \delta_T(0)}\, =\, \epsilon_T^{z \cdot \omega_L}
	\end{equation*}
	for each $z \in \mathcal H(G)$.
\end{conjecture}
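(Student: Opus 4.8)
The plan is to reduce to the abelian Brumer--Stark conjecture wherever possible, and, where this fails, to run the chain \emph{equivariant main conjecture} $\Rightarrow$ \emph{equivariant Tamagawa number conjecture} $\Rightarrow$ \emph{Brumer--Stark}, using Gross's conjecture as the bridge between $p$-adic and complex $L$-values.

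First I would pass to $p$-parts: as in the remark after Conjecture~\ref{c7-conj:Brumer} it suffices to prove the local statement for each prime $p$, with $\cl_L$, $\mathcal{H}(G)$, $\mathcal{I}(G)$ replaced by $\cl_L(p)$, $\mathcal{H}_p(G)$, $\mathcal{I}_p(G)$. The integrality clause $x\cdot\omega_L\theta_S\in\zeta(\ZZ_p[G])$ then follows from Proposition~\ref{c7-prop:omega-int}(1), which gives $\omega_L\theta_S\in\mathcal{I}_p(G)$, together with $\mathcal{H}_p(G)\cdot\mathcal{I}_p(G)=\mathcal{H}_p(G)\subseteq\zeta(\ZZ_p[G])$ --- conditionally on Conjecture~\ref{c7-conj:integrality}, which is known unconditionally for abelian $G$ (Theorem~\ref{c7-thm:abelian-int}), for $D_{4\ell}$ (the corollary above), and which simplifies whenever $p\nmid|G'|$ by Proposition~\ref{c7-prop:best-denominators}. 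When $G=H\times C$ with $H$ monomial I would moreover try to descend to genuinely abelian subextensions: writing each irreducible character of $H$ as an induced linear character and using functoriality of Artin $L$-functions exactly as in the proof of Theorem~\ref{c7-thm:maxint}, one expresses $\theta_S(L/K)$ through abelian Stickelberger elements $\theta_{S'}(L'/K')$ and assembles $BS(L/K,S)$ out of the abelian conjectures $BS(L'/K',S')$ in the reformulation of Proposition~\ref{c7-prop:abelian-BS-re}.

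For the genuine content Iwasawa theory is unavoidable, already for abelian $G$ when $K\neq\QQ$. The input is the equivariant main conjecture over the cyclotomic $\ZZ_p$-extension $L_\infty/L$, proved by Ritter--Weiss and, independently, by Kakde under vanishing of the relevant $\mu$-invariant, which identifies up to a unit a characteristic/Fitting invariant of a natural Iwasawa module with the equivariant $p$-adic $L$-function. To push this identity down to the bottom of the tower and turn it into a statement about the complex $\theta_S$ at $s=0$ and the finite class group, I would invoke Gross's conjecture: part~(i) on the order of vanishing at $s=0$ of the $p$-adic $L$-series (Spiess, Burns) excludes an extraneous zero, while part~(ii) on the leading term up to Gross's $p$-adic regulator (Dasgupta--Kakde--Ventullo) matches $p$-adic and complex values. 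This yields the ``minus part'' of the equivariant Tamagawa number conjecture (Burns), and the latter implies the annihilation statements: after multiplication into $\mathcal{H}_p(G)$ the Stickelberger element becomes a generator of a Fitting-type invariant of a perfect two-term complex $C^\bullet$ of $\ZZ_p[G]$-modules built from $E_{L,S}^T$ and $\cl_{L,S}^T$ (a $T$-modified form of $R\Gamma_c$ of the constant sheaf on $\mathrm{Spec}\,\mathcal{O}_{L,S}$), and $\mathcal{H}(G)$ is designed precisely so that $\nr$ of a matrix representing $C^\bullet$, multiplied by $\mathcal{H}(G)$, annihilates $H^1(C^\bullet)$ and hence $\cl_L(p)$ via the exact sequence of Section~1. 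This is Conjecture~\ref{c7-conj:Brumer}.

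The Brumer--Stark refinement I would read off the same complex together with the tautological exact sequence
\[
0\longrightarrow E_{L,S}^T\longrightarrow E_{L,S}\longrightarrow(\mathcal{O}_{L,S}/\mathfrak{M}_L^T)^{\times}\stackrel{\nu}{\longrightarrow}\cl_{L,S}^T\longrightarrow\cl_{L,S}\longrightarrow 0
\]
of Section~1: once $\mathfrak{a}^{x\cdot\omega_L\theta_S}$ is principal, a generator $\epsilon$ is determined up to roots of unity by its class in $\cl_{L,S}^T$, the auxiliary $\epsilon_T\in E_{L,S_\epsilon}^T$ satisfying $\epsilon^{z\delta_T(0)}=\epsilon_T^{z\omega_L}$ is produced from the boundary maps of this sequence together with the functoriality of the main conjecture under enlarging $S$ to $S\cup S_\epsilon$ and varying $T$, and the anti-unit property is forced by the identity $(1-j)\theta_S=2\theta_S$ exactly as in the remark after Proposition~\ref{c7-prop:abelian-BS-re}. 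The hard part is the prime $p=2$: there $\mathcal{H}_2(G)$ and $\mathcal{I}_2(G)$ are strictly larger than $\zeta(\ZZ_2[G])$ (see the $S_4$ and $\Aff(q)$ examples), complex conjugation no longer splits everything into clean plus and minus parts, the $\mu=0$ hypothesis and the inputs to Gross's conjecture are more delicate, and the final descent from $2$-adic $L$-values demands a separate, substantially more technical analysis. A second, logical obstruction is that for non-monomial $G$ even Conjecture~\ref{c7-conj:integrality} is open, so the assertion that $x\cdot\omega_L\theta_S$ lands in $\zeta(\ZZ[G])$ --- and with it the very meaning of the annihilation statement --- is at present only conditional.
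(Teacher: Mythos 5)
The statement you are trying to prove is a \emph{conjecture}: the paper states $BS(L/K,S)$ (Conjecture \ref{c7-conj:Brumer--Stark}) without proof, because no proof is known. Even its abelian specialisation, Conjecture \ref{c7-conj:abelian-BS}, is open in general (only the absolutely abelian case follows from Stickelberger's theorem, and beyond that one has conditional or partial results). So there is nothing in the paper to compare your argument against, and no argument along the lines you sketch can currently close the statement.

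What you have written is, in substance, an accurate survey of the conditional implications the paper records in \S 4 --- the chain EIMC $\Rightarrow$ (minus part of) ETNC $\Rightarrow$ $BS(L/K,S,p)$ of Theorems \ref{c7-thm:Brumer-Stark} and \ref{c7-thm:ETNC-implies-BS}, with the interpolation/Gross Conjectures \ref{c7-conj:int-property} and \ref{c7-conj:Gross-I} as the bridge --- but every link you use is itself unproven in the generality required. Concretely: the EIMC is only known under $\mu_p=0$ or $p\nmid|G|$; Conjecture \ref{c7-conj:int-property} is only known for monomial characters; the integrality Conjecture \ref{c7-conj:integrality}, which you need even to make sense of the statement, is open for non-monomial $G$; the descent from $S\supseteq S_p$ to arbitrary admissible $S$ requires the full Gross conjecture; and the prime $p=2$ is excluded from essentially all of these results. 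Your own closing paragraph concedes the last two points, which is to say the proposal is a correct map of the conjectural landscape rather than a proof. If your goal is to prove something, you should restrict to one of the unconditional special cases (e.g.\ Theorem \ref{c7-thm:unconditional-BS} or Corollary \ref{c7-cor:BS-Frob}) and supply the details there; as a proof of Conjecture \ref{c7-conj:Brumer--Stark} itself the argument has irreparable gaps, namely the open conjectures it invokes.
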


\begin{remark}
	If $G$ is abelian, we have 
	\[
	\mathcal{I}(G) = \mathcal{H}(G) = \ZZ[G] \quad \text{ and } \quad 
	\omega^{}_{L} = |\mu^{}_{L}| = w^{}_L.
	\]
	Hence it suffices to treat the case $x=z=1$ in this situation.
	Then, Proposition~\ref{c7-prop:abelian-BS-re} implies that
	Conjecture \ref{c7-conj:Brumer--Stark} generalises
	Conjecture \ref{c7-conj:abelian-BS}.  \exend
\end{remark}

\begin{remark}
	Suppose that Conjecture \ref{c7-conj:integrality} holds.
	Then, $\omega_L \theta_S \in \mathcal{I}(G)$ by Proposition 
	\ref{c7-prop:omega-int} and thus
	$x \cdot \omega_L \cdot \theta_S \in \zeta(\ZZ[G])$
	for each $x \in \mathcal H(G)$. \exend
\end{remark}

\begin{remark}
	When we restrict to ideals whose classes in $\cl_L$ have
	$p$-power order, we again obtain local conjectures 
	$BS(L/K,S,p)$ for each prime $p$.  \exend
\end{remark}

\subsection{The weak Brumer and Brumer--Stark conjectures}

Since $\mathcal H(G)$ always contains the central conductor $\mathcal F(G)$, we can state the following weaker 
versions of Conjectures $B(L/K,S)$ and $BS(L/K,S)$

\begin{conjecture}[$B_w(L/K,S)$] \label{c7-conj:Brumer-weak}
	\index{Brumer's conjecture!weak}
	Let $S$ be a finite set of places of $K$ containing $S_{\ram} \cup S_{\infty}$. Then, for each 
	$x \in \mathcal F(G)$ \index{central conductor}
	we have
	\[
	x \cdot \mathfrak A^{}_S \theta^{}_S\, \subseteq\, \Ann^{}_{\zeta(\ZZ[G])} (\cl^{}_L).
	\]
\end{conjecture}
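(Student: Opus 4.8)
The plan is to prove the statement one rational prime at a time, using the decomposition into local conjectures $B_w(L/K,S,p)$ that works just as for $B(L/K,S)$ (replace $\cl_L$ by $\cl_L(p)$ and $\mathcal F(G)$ by $\mathcal F_p(G)$). Fix $p$. I would first dispose of the primes $p$ not dividing the order of the commutator subgroup of $G$: by Proposition~\ref{c7-prop:best-denominators} one then has $\mathcal F_p(G)\subseteq\mathcal H_p(G)=\mathcal I_p(G)=\zeta(\ZZ_p[G])$, so the assertion collapses to $\mathfrak A_S\theta_S\subseteq\Ann_{\zeta(\ZZ_p[G])}(\cl_L(p))$. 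In this range the strategy is to descend to the abelian case: write $\theta_S$ through the irreducible characters, use functoriality of Artin $L$-functions to express the $\chi$-components in terms of linear characters of subgroups (this is licit whenever the relevant $p$-constituents of $G$ are monomial, and in particular for the monomial extensions of Theorem~\ref{c7-thm:maxint}), and then feed in Brumer's conjecture for the corresponding abelian subextensions. The integrality half of the claim comes for free here, since $\mathcal F_p(G)\cdot\zeta(\mathfrak M_p(G))\subseteq\zeta(\ZZ_p[G])$ by definition of the conductor and $\mathfrak A_S\theta_S$ lands in $\zeta(\mathfrak M_p(G))$ by Theorem~\ref{c7-thm:maxint}; but one should note that the \emph{annihilation} half does not propagate through induction purely formally, because annihilation of class groups is not functorial, so this step rests on genuinely more than $L$-function functoriality and, moreover, on the abelian Brumer conjecture, which is itself open in general (known e.g.\ for $L/\QQ$ abelian by Stickelberger's theorem).

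For the remaining primes — those dividing the order of the commutator subgroup, where $\mathcal H_p(G)$ and $\mathcal F_p(G)$ are proper subideals of $\zeta(\ZZ_p[G])$ — the elementary approach is insufficient and Iwasawa theory must be brought in. I would pass to the cyclotomic $\ZZ_p$-extension $L_\infty/L$, relate the Stickelberger elements $\theta_S$ to the equivariant $p$-adic $L$-function over the totally real field $K$, and invoke the main conjecture of equivariant Iwasawa theory (Ritter--Weiss; Kakde) to identify the relevant ideal, after multiplication by elements of $\mathcal F_p(G)$ — which is exactly what forces integrality — with a characteristic ideal that annihilates the inverse limit over the tower of the $p$-parts of the class groups. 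A suitable control theorem then descends this to the annihilation of $\cl_L(p)$ itself. Equivalently, one can route everything through the equivariant Tamagawa number conjecture for the Tate motive $h^0(\mathrm{Spec}\,L)$ equipped with its $\ZZ[G]$-action, from which $B_w(L/K,S)$ follows by a standard, if delicate, translation.

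The hard part is twofold. First, the pathology at $p=2$: the examples with $\Aff(q)$ and with $S_4$ show that $\mathcal H_2(G)$ and $\mathcal F_2(G)$ can be strictly smaller than $\zeta(\ZZ_2[G])$ and than one would naively expect, so the multiplication by $x\in\mathcal F(G)$ must be tracked component by component and a factor of $2$ is lost in several places; making the Iwasawa-theoretic descent compatible with these $2$-adic subtleties is the principal technical obstacle. Second, and more seriously, the equivariant main conjecture is available only modulo vanishing of the Iwasawa $\mu$-invariant of $L_\infty/L$, which is not known in general, so unconditionally the argument yields $B_w(L/K,S,p)$ only for those pairs $(L,p)$ where $\mu$-vanishing is at hand (e.g.\ by Ferrero--Washington when $L$ is abelian over $\QQ$). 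I expect any proof covering genuinely new non-abelian cases to follow exactly this route, with essentially all of the effort concentrated on the prime $2$ and on isolating, or removing, the $\mu$-hypothesis.
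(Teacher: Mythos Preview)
The statement you are attempting to prove is not a theorem in the paper; it is Conjecture~\ref{c7-conj:Brumer-weak}, the weak non-abelian Brumer conjecture, and it is genuinely open. There is accordingly no proof in the paper to compare your proposal against. What the paper does contain are \emph{conditional} and \emph{partial} results in its direction: Theorem~\ref{c7-thm:SS-implies-BSw} shows that the minus part of the ETNC over the maximal order implies $B_w(L/K,S)$, and Theorem~\ref{c7-thm:weak-results} gives $B_w(L/K,S,p)$ unconditionally for all primes $p$ satisfying a mild splitting hypothesis (in particular for all but finitely many $p$). Your sketch is, in spirit, a rough outline of precisely these kinds of conditional approaches --- reduce to abelian subextensions where possible, and otherwise invoke the equivariant Iwasawa main conjecture or the ETNC --- and you yourself flag the obstructions (the abelian Brumer conjecture is open, the EIMC requires $\mu=0$, the prime $2$ is problematic).

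So the gap is not a missing lemma or a wrong step; it is that you are proposing a proof of an open conjecture. Your first paragraph, in particular, does not work even conditionally as stated: for primes $p$ not dividing the order of the commutator subgroup, the claim does \emph{not} collapse to something you can deduce from the abelian case by functoriality, because --- as you correctly note --- annihilation of class groups is not functorial under induction, and the abelian Brumer conjecture is itself unproven. The second paragraph is closer to what is actually known, but it is a description of the conditional machinery of \S\ref{c7-sec:results}, not a proof. If your intent was to summarise the state of the art rather than to prove the conjecture, then your outline is broadly accurate, though the paper's actual route to the unconditional Theorem~\ref{c7-thm:weak-results} goes via the strong Stark conjecture over the maximal order rather than via a direct monomial reduction.
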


\begin{conjecture}[$BS_w(L/K,S)$] \label{c7-conj:Brumer-Stark-weak}
	\index{Brumer--Stark conjecture!weak}
	Let $S$ be a finite set of places of $K$ containing $S_{\ram} \cup S_{\infty}$.
	Then for each $x \in \mathcal F(G)$ we have
	$x \cdot \omega_L \cdot \theta_S \in \zeta(\ZZ[G])$.
	Moreover, for each non-zero fractional ideal $\mathfrak a$ of $L$, 
	there is an anti-unit 
	$\epsilon = \epsilon(x,\mathfrak a,S) \in L^{\times}$ such that
	\[
	\mathfrak a^{x \cdot \omega_L \cdot \theta_S}\, =\, (\epsilon)
	\]
	and, for each finite set $T$ of primes of $K$ such that 
	$\hyp(S \cup S_{\epsilon},T)$ holds, there is an $\epsilon_T \in E_{S_{\epsilon}}^T$
	such that
	\[
	\epsilon^{z \cdot \delta_T(0)} = \epsilon_T^{z \cdot \omega_L}
	\]
	for each $z \in \mathcal F(G)$.
\end{conjecture}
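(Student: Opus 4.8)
Since $\mathcal{F}(G) = \zeta(\QQ[G]) \cap \bigcap_p \mathcal{F}_p(G)$ and $\cl_L$ decomposes as $\prod_p \cl_L(p)$, it is enough to prove, for each prime $p$, the local statement with $x, z \in \mathcal{F}_p(G)$ and $\mathfrak{a}$ of $p$-power order in $\cl_L$; I would split this into the \emph{integrality assertion} $x \cdot \omega_L \cdot \theta_S \in \zeta(\ZZ_p[G])$ and the \emph{Brumer--Stark refinement} producing $\epsilon$ and the auxiliary $\epsilon_T$. The integrality assertion is the reason the weak conjecture is phrased with $\mathcal{F}(G)$ rather than $\mathcal{H}(G)$: granting the weak integrality Conjecture \ref{c7-conj:weak-integrality}, Proposition \ref{c7-prop:omega-int}(2) gives $\omega_L \theta_S \in \zeta(\mathfrak{M}(G))$, whence $x \cdot \omega_L \cdot \theta_S \in \mathcal{F}(G) \cdot \zeta(\mathfrak{M}(G)) \subseteq \zeta(\ZZ[G])$ by the defining property of the central conductor. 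By Theorem \ref{c7-thm:maxint} this input is unconditional whenever $G \simeq H \times C$ with $H$ monomial and $C$ abelian, and in particular (Theorem \ref{c7-thm:abelian-int}) whenever $G$ is abelian; so for monomial extensions the integrality assertion is automatic and the weight of the conjecture lies entirely in the refinement.

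For the refinement I would first assume $G$ monomial and reduce it to the abelian Brumer--Stark Conjecture \ref{c7-conj:abelian-BS}, in the $(S,T)$-reformulated shape of Proposition \ref{c7-prop:abelian-BS-re}, for a family of abelian subextensions. Since $\theta_S^T$ is supported on the odd characters of $G$ (those on which the central involution $j$ acts by $-1$; the plus part vanishes by Remark \ref{c7-rem:vanishing}), and an induced character $\mathrm{Ind}_H^G \psi$ with $\psi$ linear has $\chi(j) = 0$ unless $j \in H$, every relevant irreducible $\chi = \mathrm{Ind}_H^G \psi$ automatically satisfies $j \in H$; hence $K_\chi := L^H$ is totally real, $\Gal(L/K_\chi) = H$ is abelian, and $L/K_\chi$ is an abelian CM-extension to which Conjecture \ref{c7-conj:abelian-BS} applies. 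Inductivity of Artin $L$-functions identifies the $\chi$-component of $\theta_S^T(L/K)$ with the $\psi$-component of the abelian Stickelberger element $\theta_{S_\chi}^{T_\chi}(L/K_\chi)$ for the pulled-back sets of places $S_\chi, T_\chi$; moreover the conductor formula \eqref{c7-eqn:conductor-formula} shows that the $\chi$-block of $\mathcal{F}_p(G)$ carries the factor $|G|/\chi(1) = |H|$ together with an inverse-different contribution, which is precisely what clears the denominators that descent from $L/K_\chi$ back to $L/K$ would otherwise introduce. The plan is then to build the anti-unit $\epsilon$ attached to $\mathfrak{a}$ as an explicit product of field-theoretic norms of the abelian Brumer--Stark generators of the various $L/K_\chi$, to verify the divisor identity $\mathfrak{a}^{x \omega_L \theta_S} = (\epsilon)$ by comparing $\chi$-components for all $\chi$, and to transport the auxiliary elements $\epsilon_T$ and the congruence \eqref{c7-eqn:condition} from the abelian data in the same way, with $z \in \mathcal{F}(G)$ again absorbing the induction denominators; the anti-unit normalisation is routine at odd $p$ (project with $\frac{1}{2}(1-j)$, using $(1-j)\theta_S = 2\theta_S$) and needs only the standard separate treatment of odd-order ideal classes at $p = 2$. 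A variant of this plan bypasses the abelian Brumer--Stark conjecture altogether and deduces the refinement from the equivariant Tamagawa number conjecture, or --- together with Gross's conjecture --- from the main conjecture of equivariant Iwasawa theory; these are the routes flagged in the introduction, and they are what one would invoke to reach beyond the monomial case.

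The main obstacle is the \emph{gluing}. Each abelian Brumer--Stark generator on $L/K_\chi$ is determined only up to a root of unity, with no canonical normalisation, so assembling them into a single anti-unit $\epsilon \in L^\times$ whose divisor is \emph{exactly} $\mathfrak{a}^{x \omega_L \theta_S}$ amounts to showing that all of these ambiguities can be resolved simultaneously --- a descent problem (of $H^1$-type) for the $S_\epsilon$-unit group along $L/K$, made harder by the demand that the $T$-congruence \eqref{c7-eqn:condition} survive for every $z \in \mathcal{F}(G)$ at once. A second, more structural obstruction is that for a genuinely non-monomial $G$ --- one with an irreducible character not induced from a linear one --- even the weak integrality Conjecture \ref{c7-conj:weak-integrality}, and hence the integrality assertion of $BS_w(L/K,S)$ itself, is presently open; so outside the monomial (and hybrid) case the argument above is necessarily conditional on the deeper inputs just mentioned rather than self-contained.
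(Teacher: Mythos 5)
The statement you set out to prove is Conjecture \ref{c7-conj:Brumer-Stark-weak}; it is an \emph{open conjecture}, the paper offers no proof of it, and no complete proof is known. Your proposal is therefore best read as a programme, and as such it has one sound half and one half that is not a proof. The integrality assertion is handled correctly: granting Conjecture \ref{c7-conj:weak-integrality}, Proposition \ref{c7-prop:omega-int}(2) gives $\omega_L\theta_S\in\zeta(\mathfrak{M}(G))$ and the defining property of the central conductor gives $x\cdot\omega_L\cdot\theta_S\in\zeta(\ZZ[G])$ for $x\in\mathcal{F}(G)$; this is exactly the remark the paper makes after the conjecture, and by Theorems \ref{c7-thm:abelian-int} and \ref{c7-thm:maxint} the input is unconditional in the abelian and, more generally, monomial cases. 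But the refinement --- producing the anti-unit $\epsilon$ with $\mathfrak{a}^{x\omega_L\theta_S}=(\epsilon)$ and the elements $\epsilon_T$ --- is precisely where your argument stops being an argument. First, reducing to Conjecture \ref{c7-conj:abelian-BS} for the subextensions $L/K_\chi$ replaces one open conjecture by another: the abelian Brumer--Stark conjecture is itself unproven in general. Second, even granting it, the ``gluing'' obstacle you describe (each abelian generator determined only up to a root of unity, with the $T$-congruence of Proposition \ref{c7-prop:abelian-BS-re} to be preserved simultaneously for all $z\in\mathcal{F}(G)$) is not a technical loose end but the genuinely open part of the problem; no descent argument of the kind you gesture at is known to close it.

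For comparison, the routes the paper actually takes to this conjecture are all conditional and bypass direct induction from abelian subextensions: Theorem \ref{c7-thm:SS-implies-BSw} deduces $BS_w(L/K,S)$ from the minus part of the ETNC for the pair $(h^{0}(\mathrm{Spec}(L)),\mathfrak{M}(G))$ (equivalently, the strong Stark conjecture), and Theorem \ref{c7-thm:weak-results} extracts from this unconditional cases of $BS_w(L/K,S,p)$ for odd $p$ under a non-splitting hypothesis on the primes above $p$. If you want to turn your proposal into something defensible, the honest formulation is an implication of the shape ``abelian Brumer--Stark for all $L/K_\chi$, plus weak integrality, implies $BS_w(L/K,S)$ for monomial $G$'' --- and even that would require you to actually solve the gluing problem rather than name it.
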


\begin{remark}
	Suppose that Conjecture \ref{c7-conj:weak-integrality} holds.
	Then $x \cdot \mathfrak A_S \theta_S$ and 
	$x \cdot \omega_L \cdot \theta_S$ both lie in $\zeta(\ZZ[G])$
	(for the latter use Proposition \ref{c7-prop:omega-int}). \exend
\end{remark}

\begin{remark}
	We may again formulate local conjectures $B_w(L/K,S,p)$ and
	$BS_w(L/K,S,p)$ for each prime $p$.  \exend
\end{remark}

\subsection{Implications between the conjectures}

We first discuss dependence on the set $S$.

\begin{lemma} \label{c7-lem:dependence-S}
	Let\/ $S$ and\/ $S'$ be two finite sets of places of\/ $K$ such that\/
	$S$ contains\/ $S_{\ram} \cup S_{\infty}$.
	If $S \subseteq S'$, one has
	\begin{eqnarray*}
		B(L/K,S) & \implies & B(L/K,S') \\ 
		B_w(L/K,S) &  \implies & B_w(L/K,S')\\
		BS(L/K,S) & \implies & BS(L/K,S') \\ 
		BS_w(L/K,S) &  \implies & BS_w(L/K,S').
	\end{eqnarray*}
\end{lemma}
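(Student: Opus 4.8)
The plan is to reduce everything to a single-prime comparison and then exploit the relation $\theta_{S'}^T = \theta_S^T \cdot \prod_{v \in S' \setminus S} (1 - N(v)\phi_w^{-1})$. The key point is that, since $S$ already contains $S_{\ram} \cup S_{\infty}$, adding a finite place $v$ to $S$ does not change the inertia and Frobenius data used to build $\theta_S$; one only removes the Euler factor at $v$ from $L_S(s,\chi)$ and discards the corresponding factor in $\delta_T$ (if $v \in T$, but we may assume $S' \cap T = \emptyset$ throughout, shrinking $T$ if necessary since removing places from $T$ preserves $\hyp$). Concretely, for a single extra finite place $v_0$, put $\rho := \nr(1 - N(v_0)\phi_{w_0}^{-1}) \in \zeta(\mathfrak M(G))$; then $\theta_{S'}^T = \rho \cdot \theta_S^T$ for every admissible $T$. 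By induction it suffices to treat $S' = S \cup \{v_0\}$ with $v_0 \notin S$ a finite place.

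First I would record the behaviour of the auxiliary module $\mathfrak A_S$: if $\hyp(S',T)$ holds then $\hyp(S,T)$ holds (the conditions $S \supseteq S_{\ram}\cup S_\infty$, $S \cap T = \emptyset$, and torsionfreeness of $E_{L,S}^T \subseteq E_{L,S'}^T$ are all inherited), so $\mathfrak A_{S'}$ is generated over $\zeta(\ZZ[G])$ by a subset of the generators of $\mathfrak A_S$, hence $\mathfrak A_{S'} \subseteq \mathfrak A_S$. Next, note $\mathfrak A_{S'}\theta_{S'} = \mathfrak A_{S'}\,\rho\,\theta_S \subseteq \rho \cdot \mathfrak A_S\theta_S$; but also $\rho \in \mathfrak A_S$ is not needed — what I really want is that for $x \in \mathcal H(G)$ (resp.\ $x \in \mathcal F(G)$) one has $x\rho \in \zeta(\ZZ[G])$ and more: multiplying the Brumer inclusion for $S$ on the left by $\rho$ and using that $\rho$ lands in $\zeta(\mathfrak M(G))$ together with $\mathcal H(G)\cdot\zeta(\mathfrak M(G)) \subseteq \mathcal H(G)\cdot \mathcal I(G) = \mathcal H(G) \subseteq \zeta(\ZZ[G])$, one gets $x \cdot \mathfrak A_{S'}\theta_{S'} \subseteq x \cdot \rho \cdot \mathfrak A_S\theta_S \subseteq \mathcal H(G)\cdot \Ann_{\zeta(\ZZ[G])}(\cl_L) \subseteq \Ann_{\zeta(\ZZ[G])}(\cl_L)$, since $\zeta(\ZZ[G])$ is commutative and $\rho$ acts on $\cl_L$ after multiplication by $\mathcal H(G)$. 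For the weak versions replace $\mathcal H(G)$ by $\mathcal F(G)$ throughout, using $\mathcal F(G)\cdot\zeta(\mathfrak M(G)) \subseteq \ZZ[G]$ (definition of the central conductor) so that $x\rho \in \zeta(\ZZ[G])$ and in fact $x\rho \in \mathcal F(G)$-multiple of an element that already annihilates; more directly, $x\rho\,\mathfrak A_S\theta_S \subseteq x'\,\mathfrak A_S\theta_S$ for a suitable $x' \in \mathcal F(G)$ is false in general, so instead one argues as above that $\rho \cdot \Ann \subseteq \Ann$ after the $\mathcal F(G)$-scaling.

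For the Brumer--Stark implications the plan is the same multiplicativity argument at the level of generators. Given $x \in \mathcal H(G)$ (resp.\ $\mathcal F(G)$) and a fractional ideal $\mathfrak a$, the $S$-conjecture produces an anti-unit $\epsilon = \epsilon(x\rho', \mathfrak a, S)$ — here one feeds in the element $x$ itself and exploits $\theta_{S'} = \rho\theta_S$ to write $\mathfrak a^{x\omega_L\theta_{S'}} = \mathfrak a^{(x\rho)\omega_L\theta_S}$; one then checks $x\rho \in \mathcal H(G)$ (resp.\ $\mathcal F(G)$), which holds because $\mathcal H(G)$ and $\mathcal F(G)$ are ideals in $\mathcal I(G)$ and $\zeta(\mathfrak M(G))$ respectively and $\rho$ lies in $\zeta(\mathfrak M(G))$, and applies $BS(L/K,S)$ with the admissible element $x\rho$ to obtain $\epsilon$ and the family $\epsilon_T$. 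The torsionfreeness side-condition $\hyp(S \cup S_\epsilon, T)$ for the new $S'$ follows from the one for $S$ because $S' \cup S_\epsilon \supseteq S \cup S_\epsilon$, and I would invoke the remark after the definition of $\hyp$ that one may always enlarge $T$ (add a place of large norm) to restore torsionfreeness without affecting the conclusion, since $\theta_{S'}^T$ for larger $T$ is $\delta(0)\theta_{S'}^{T_0}$. The one genuinely delicate point — and the step I expect to be the main obstacle — is verifying that the \emph{same} $\epsilon$ works simultaneously for all $T$ admissible for $S'$ rather than only for those admissible for $S$; this requires checking that the compatibility $\epsilon^{z\delta_T(0)} = \epsilon_T^{z\omega_L}$ is insensitive to whether the Euler factor at $v_0$ has been moved from $\theta$ into $\delta_T$ or not, which ultimately comes down to the identity $\theta_{S'}^T = \theta_S^{T \cup \{v_0\}}$ when $v_0 \in T$ is allowed, and otherwise to the multiplicativity of the $\delta_T(0)$ in $T$. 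I would handle this by fixing one auxiliary $T$ first (to pin down $\epsilon$ up to roots of unity), and then deriving the general $T$-compatibility from the $S$-version applied to $T \cup \{v_0\}$.
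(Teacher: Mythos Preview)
Your overall strategy is the same as the paper's, but you have the comparison formula wrong. Passing from $S$ to $S' = S \cup \{v_0\}$ removes the Euler factor of $L_S(s,\chi)$ at $v_0$; at $s=0$ this factor is $\det(1-\phi_{w_0}\mid V_\chi)$, so after applying $\sharp$ one obtains
\[
\theta_{S'} \,=\, \nr(\lambda)\cdot\theta_S,\qquad \lambda \,=\, \prod_{v\in S'\setminus S}(1-\phi_w^{-1})\ \in\ \ZZ[G],
\]
and \emph{not} $\rho=\nr(1-N(v_0)\phi_{w_0}^{-1})$. You have confused the $L$-function Euler factor with the factor $\det(1-N(v)\phi_w^{-1}\mid V_\chi)$ appearing in $\delta_T(0)$. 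For the same reason your identity $\theta_{S'}^T=\theta_S^{T\cup\{v_0\}}$ at the end is false: the two sides differ precisely by whether or not there is an $N(v_0)$ inside the reduced norm.

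With the correct $\nr(\lambda)$ the argument is immediate and is exactly the paper's: since $\lambda\in\ZZ[G]$ one has $\nr(\lambda)\in\mathcal I(G)$, hence for $x\in\mathcal H(G)$ the product $\tilde x:=x\cdot\nr(\lambda)$ again lies in $\mathcal H(G)$ by \eqref{c7-eqn:HI_equals_H}, and
\[
x\,\mathfrak A_{S'}\theta_{S'}\ \subseteq\ \tilde x\,\mathfrak A_S\theta_S\ \subseteq\ \Ann_{\zeta(\ZZ[G])}(\cl_L)
\]
by $B(L/K,S)$ and the inclusion $\mathfrak A_{S'}\subseteq\mathfrak A_S$ you noted. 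The weak versions are identical with $\mathcal F(G)$ in place of $\mathcal H(G)$, using that $\mathcal F(G)$ is an ideal of $\zeta(\mathfrak M(G))$ and $\nr(\lambda)\in\zeta(\mathfrak M(G))$.

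Your treatment of the Brumer--Stark implication is overcomplicated, and the ``genuinely delicate point'' you flag is not there. Given $x\in\mathcal H(G)$ and $\mathfrak a$, apply $BS(L/K,S)$ with the element $\tilde x=x\cdot\nr(\lambda)\in\mathcal H(G)$ to obtain the anti-unit $\epsilon$ with $\mathfrak a^{x\,\omega_L\theta_{S'}}=\mathfrak a^{\tilde x\,\omega_L\theta_S}=(\epsilon)$. Now if $T$ satisfies $\hyp(S'\cup S_\epsilon,T)$ then, since $S\cup S_\epsilon\subseteq S'\cup S_\epsilon$, it automatically satisfies $\hyp(S\cup S_\epsilon,T)$ as well (all three conditions are inherited), so $BS(L/K,S)$ directly furnishes the required $\epsilon_T$ for every such $T$ and every $z\in\mathcal H(G)$. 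There is no need to move $v_0$ into $T$ or to adjust $T$ at all.
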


\begin{proof}
	We only give the proof in the case of Brumer's conjecture;
	the other cases follow similarly.
	So assume that $B(L/K, S)$ holds.
	We have $\theta_{S'} = \nr (\lambda) \theta_S$, where 
	$\lambda = \prod_{v \in S' \setminus S} (1 - \phi_w^{-1}) \in \ZZ[G]$.
	If $x$ lies in 
	$\mathcal H(G)$, so does $\tilde x := x \cdot \nr(\lambda)$.
	Hence we see that
	$x \mathfrak A_{S'} \theta_{S'} \subseteq \tilde x \mathfrak A_S
	\theta_S$ belongs to $\zeta(\ZZ[G])$ and annihilates $\cl_L$.
\end{proof}

The relation between the Brumer--Stark conjecture and Brumer's conjecture
is slightly more subtle.

\begin{lemma} \label{c7-lem:BS-implies-B}
	Let\/ $S$ be a finite set of places of\/ $K$ containing\/ $S_{\ram} \cup S_{\infty}$. Then,
	\begin{eqnarray*}
		BS(L/K,S) & \implies & B(L/K,S)\\
		BS_w(L/K,S) & \implies & B_w(L/K,S)
	\end{eqnarray*}
\end{lemma}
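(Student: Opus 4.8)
The plan is to show that the Brumer--Stark conjecture furnishes, for every fractional ideal $\mathfrak a$ and every $x \in \mathcal H(G)$, an explicit generator of $\mathfrak a^{x \omega_L \theta_S}$, and then to feed this back into the definition of the annihilator. First I would fix $x \in \mathcal H(G)$ and a non-zero fractional ideal $\mathfrak a$. By $BS(L/K,S)$ we get $x \cdot \omega_L \cdot \theta_S \in \zeta(\ZZ[G])$ and an anti-unit $\epsilon = \epsilon(x,\mathfrak a,S)$ with $\mathfrak a^{x \omega_L \theta_S} = (\epsilon)$; in particular this ideal is principal, so the class $[\mathfrak a]$ is annihilated by $x \omega_L \theta_S$ in $\cl_L$. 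Since $\mathfrak a$ was arbitrary, $x \omega_L \theta_S \in \Ann_{\zeta(\ZZ[G])}(\cl_L)$ for every $x \in \mathcal H(G)$.

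The remaining point is to bridge the gap between $\omega_L \theta_S$ and $\mathfrak A_S \theta_S$: Conjecture $B(L/K,S)$ asks for annihilation by $x \cdot \mathfrak A_S \theta_S$, not merely by $x \omega_L \theta_S$. For this I would invoke Lemma \ref{c7-lem:Coates} in the same way as in the proof of Proposition \ref{c7-prop:omega-int}. Given any generator $\delta_T(0)$ of $\mathfrak A_S$ (with $\hyp(S,T)$ satisfied), one has $\mathfrak A_S \theta_S \ni \delta_T(0)\theta_S = \theta_S^T$, and it suffices to show $x \theta_S^T \in \Ann_{\zeta(\ZZ[G])}(\cl_L)$ for all such $T$ and all $x \in \mathcal H(G)$. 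Choosing a totally decomposed place $v_0$ as in Lemma \ref{c7-lem:Coates} with $w_L = (1 - N(v_0)) c$, $c \in \ZZ^\times$ (really a $p$-adic unit after localisation), and $\hyp(S, T_0)$ satisfied for $T_0 = \{v_0\}$, we get $\omega_L \theta_S = \nr(c)\,\theta_S^{T_0}$, hence $\theta_S^{T_0}$ differs from $\omega_L \theta_S$ by the central unit $\nr(c)^{-1}$; combining this with the first paragraph handles $T_0$. For a general $T$ one writes $\theta_S^T = \delta_T(0)\theta_S$ and notes $\delta_T(0) \in \Ann_{\ZZ[G]}(\mu_L) \subseteq \mathfrak A_S$, so that $x \theta_S^T = (x \delta_T(0) \nr(c)^{-1}) \cdot \theta_S^{T_0}$ — but one must be slightly careful, since $\nr(c)^{-1}$ need not lie in $\zeta(\ZZ[G])$. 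The clean way is to argue $p$-locally: fix $p$, choose $v_0$ so that $c \in \ZZ_p^\times$, and then $x \theta_S^T = (x\,\delta_T(0)) \cdot \nr(c)^{-1}\theta_S^{T_0}$ with $\nr(c)^{-1} \in \zeta(\ZZ_p[G])^\times$, so $x\theta_S^T$ is a $\zeta(\ZZ_p[G])$-multiple of something in $\Ann(\cl_L(p))$ by the $p$-local form of the first paragraph. Since this holds for every $p$, we conclude $x \theta_S^T \in \Ann_{\zeta(\ZZ[G])}(\cl_L)$.

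Assembling these two steps gives $x \cdot \mathfrak A_S \theta_S \subseteq \Ann_{\zeta(\ZZ[G])}(\cl_L)$ for every $x \in \mathcal H(G)$, which is exactly $B(L/K,S)$. The weak implication $BS_w(L/K,S) \Rightarrow B_w(L/K,S)$ is verbatim the same argument with $\mathcal H(G)$ replaced throughout by the central conductor $\mathcal F(G)$; note that the elements $x\,\delta_T(0)$ and $x\,\nr(c)$ used above still lie in $\mathcal F(G)$ because $\mathcal F(G)$ is an ideal of $\zeta(\mathfrak M(G))$ and $\delta_T(0), \nr(c) \in \zeta(\mathfrak M(G))$, so nothing changes.

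The main obstacle, such as it is, is purely bookkeeping: keeping track of the fact that $\nr(c)$ is a unit only after localising at a prime $p$ (and that one may have to choose a different auxiliary place $v_0$ for each $p$), rather than globally. Once one commits to phrasing everything $p$-locally — which is harmless since $\Ann_{\zeta(\ZZ[G])}(\cl_L) = \zeta(\QQ[G]) \cap \bigcap_p \Ann_{\zeta(\ZZ_p[G])}(\cl_L(p))$ and $\mathcal H(G)$, $\mathcal F(G)$ both decompose compatibly over primes — there is no real difficulty, and the whole proof is a one-paragraph deduction from $BS$, Lemma \ref{c7-lem:Coates}, and the definition of $\mathfrak A_S$.
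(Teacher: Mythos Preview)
Your argument has a genuine gap: you never use the second half of $BS(L/K,S)$ --- the existence of $\epsilon_T \in E_{L,S_\epsilon}^T$ with $\epsilon^{z\delta_T(0)} = \epsilon_T^{z\omega_L}$ --- and that clause is exactly what carries the weight. From the principality statement alone you correctly extract $x\omega_L\theta_S \in \Ann(\cl_L)$, but this is strictly weaker than $B(L/K,S)$: already in the abelian case it says only that $w_L\theta_S$ annihilates, whereas Brumer requires the whole ideal $\Ann_{\ZZ[G]}(\mu_L)\theta_S$ to annihilate, and $\Ann_{\ZZ[G]}(\mu_L)$ is in general larger than $(w_L)$.

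Your attempted bridge via Lemma~\ref{c7-lem:Coates} does not close this gap. The displayed identity $x\theta_S^T = (x\,\delta_T(0))\cdot \nr(c)^{-1}\theta_S^{T_0}$ is simply false: since $\delta_{T_0}(0) = \nr(1-N(v_0)) = \omega_L\,\nr(c)^{-1}$, one has $\theta_S^T = \delta_T(0)\,\delta_{T_0}(0)^{-1}\,\theta_S^{T_0} = \delta_T(0)\,\nr(c)\,\omega_L^{-1}\,\theta_S^{T_0}$, and the factor $\delta_T(0)\,\omega_L^{-1}$ has no reason to lie in $\zeta(\ZZ_p[G])$ (its $\chi$-component is $\delta_T(0,\chi)/w_L^{\chi(1)}$). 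Equivalently, for a general $T$ the element $\delta_T(0)$ is not a $\zeta(\ZZ_p[G])$-multiple of $\omega_L$, so annihilation by $x\omega_L\theta_S$ does not propagate to $x\theta_S^T$. (Also, your parenthetical ``$\delta_T(0)\in\Ann_{\ZZ[G]}(\mu_L)$'' is incorrect in the non-abelian setting: $\delta_T(0)$ is a reduced norm living in $\zeta(\mathfrak M(G))$, not in $\ZZ[G]$.)

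The paper's proof proceeds differently and uses the $\epsilon_T$-condition in an essential way. From $\mathfrak a^{x\omega_L\theta_S}=(\epsilon)$ and $(\epsilon)^{z\delta_T(0)}=(\epsilon_T)^{z\omega_L}$ one gets $\mathfrak a^{xz\,\omega_L\theta_S^T}=(\epsilon_T)^{z\omega_L}$. Since $\omega_L\in\zeta(\QQ[G])^\times$, there exists $N\in\NN$ with $N\omega_L^{-1}\in\zeta(\ZZ[G])$, and then $z:=|G|\cdot N\cdot\omega_L^{-1}$ lies in $\mathcal F(G)\subseteq\mathcal H(G)$; plugging this $z$ in and using that fractional ideals are $\ZZ$-torsionfree yields $\mathfrak a^{x\theta_S^T}=(\epsilon_T)$ directly. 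The same manipulation with $\mathcal H(G)$ replaced by $\mathcal F(G)$ gives the weak implication.
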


\begin{proof}
	We give the proof for the strong conjectures.
	Let $\mathfrak a$ be a non-zero fractional ideal of $L$ and
	let $x \in \mathcal H(G)$. Then
	$\mathfrak a^{x\cdot \omega_L \theta_S} = (\epsilon)$ and $(\epsilon)^{z\cdot \delta_T(0)} = (\epsilon_T)^{z \cdot \omega_L}$
	for all $z \in \mathcal H(G)$. Hence
	\begin{equation} \label{c7-eqn:fast}
	\mathfrak a^{x \cdot z \cdot \omega^{}_L \cdot \theta_S^T}\, =\, (\epsilon)^{z \cdot \delta^{}_T(0)} 
	= (\epsilon_T)^{z \cdot \omega_L}.
	\end{equation}
	Since $\omega_L \in \zeta(\QQ G)^{\times}$, we find $N \in \NN$ such that 
	$N \cdot \omega_L^{-1} \in \zeta(\ZZ[G])$.
	Moreover, $|G| \cdot \zeta(\ZZ[G]) \subseteq \mathcal F(G) \subseteq 
	\mathcal{H}(G)$ such that we may choose
	$z = |G| \cdot N \cdot \omega_L^{-1}$. However, the group of fractional ideals has no 
	$\ZZ$-torsion such that equation \eqref{c7-eqn:fast} implies that $x \cdot \theta_S^T$
	belongs to $\zeta(\ZZ[G])$ (take $\mathfrak{a}$ to be a 
	totally decomposed prime) and
	$\mathfrak a^{x \cdot \theta_S^T} = (\epsilon_T)$.
\end{proof}

\subsection{A strong Brumer--Stark property}

\begin{definition}
	\index{Brumer--Stark property!strong}
	\index{class group!ray class group}
	Let $S$ be a finite set of places of $K$ containing 
	$S_{\ram} \cup S_{\infty}$. We say that $L/K$ satisfies the
	\emph{strong Brumer--Stark property} $SBS(L/K,S)$ if 
	\begin{equation*} 
	\mathcal{H}(G) \cdot \frac{1}{2} \cdot \theta_S^T\, \subseteq\,
	\Ann^{}_{\zeta(\ZZ[G])} (\cl_L^T)
	\end{equation*}
	for all finite sets $T$ of $K$ such that $\hyp(S,T)$ holds.
\end{definition}

\begin{remark}
	It is clear that $SBS(L/K,S)$ holds if and only if
	\[
	\mathcal{H}^{}_p(G) \cdot \frac{1}{2} \cdot \theta_S^T\, \subseteq\,
	\Ann^{}_{\zeta(\ZZ^{}_p[G])} (\cl_L^T(p))
	\]
	for all primes $p$. It is then not hard to show that the
	strong Brumer--Stark property $StBS(L/K,S,p)$ 
	as discussed in \cite{c7-Ni11}
	for all $p$ implies our strong Brumer--Stark property $SBS(L/K,S)$. \exend
\end{remark}

\begin{remark}
	Suppose that $S'$ is a finite set of places of $K$ containing $S$.
	Then it follows as in the proof of Lemma \ref{c7-lem:dependence-S}
	that $SBS(L/K,S)$ implies $SBS(L/K,S')$.
\end{remark}

\begin{proposition} \label{c7-prop:SBS-BS}
	Let\/ $S$ be a finite set of places of\/ $K$ containing\/ 
	$S_{\ram} \cup S_{\infty}$. Then, $SBS(L/K,S)$ implies\/ $BS(L/K,S)$.
\end{proposition}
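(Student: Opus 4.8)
The plan is to verify the three ingredients of $BS(L/K,S)$ in turn: that $x\omega_L\theta_S\in\zeta(\ZZ[G])$ for every $x\in\mathcal H(G)$; that for every non-zero fractional ideal $\mathfrak a$ there is an anti-unit $\epsilon$ with $\mathfrak a^{x\omega_L\theta_S}=(\epsilon)$; and the congruence condition. The two running tools are the identity $(1-j)\theta_S=2\theta_S$ (hence $(1-j)\theta_S^T=2\theta_S^T$, as $\delta_T(0)$ is central), which removes a spurious factor $\frac12$ upon multiplying by $1-j\in\ZZ[G]$, and the observation from Lemma \ref{c7-lem:Coates}---used exactly as in the proof of Proposition \ref{c7-prop:omega-int}---that for each prime $p$ there is a totally decomposed place $v_0=v_0(p)$ of $K$, avoiding any prescribed finite set, with $\hyp(S,T_{v_0})$ ($T_{v_0}:=\{v_0\}$) and $w_L=(1-N(v_0))c$ where $c\in\ZZ_p^\times$; since $v_0$ splits completely, $\delta_{T_{v_0}}(0)=\nr(1-N(v_0))$, so $\omega_L\theta_S=\nr(c)\,\theta_S^{T_{v_0}}$ with $\nr(c)\in\zeta(\ZZ_p[G])^\times$.

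For the integrality, $SBS(L/K,S)$ gives $x\cdot\frac12\theta_S^{T_{v_0}}\in\zeta(\ZZ[G])$, and multiplying by the $p$-adic unit $\nr(c)$ yields $x\cdot\frac12\omega_L\theta_S\in\zeta(\ZZ_p[G])$ for every $p$; as this element also lies in $\zeta(\QQ[G])$, it lies in $\zeta(\ZZ[G])$, and then $x\omega_L\theta_S=(1-j)\cdot(x\cdot\frac12\omega_L\theta_S)\in\zeta(\ZZ[G])$. The same computation with an arbitrary admissible $T$ in place of $T_{v_0}$ shows $x\theta_S^T\in\zeta(\ZZ[G])$, which I use below. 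Next, $SBS(L/K,S)$ says that $x\cdot\frac12\theta_S^{T_{v_0}}$ annihilates the ray class group $\cl_L^{T_{v_0}}$; composing with the canonical surjection $\cl_L^{T_{v_0}}\twoheadrightarrow\cl_L$ coming from the exact sequence of \S 1.1 and again multiplying by $\nr(c)$, one finds that $x\cdot\frac12\omega_L\theta_S\in\zeta(\ZZ[G])$ annihilates $\cl_L$. Hence $\mathfrak a^{x\cdot\frac12\omega_L\theta_S}=(\beta)$ for some $\beta\in L^\times$; set $\epsilon:=\beta^{1-j}$. Then $\epsilon^{1+j}=\beta^{(1-j)(1+j)}=1$, so $\epsilon$ is an anti-unit, and $(\epsilon)=\mathfrak a^{(1-j)\cdot x\cdot\frac12\omega_L\theta_S}=\mathfrak a^{x\omega_L\theta_S}$; note that $\epsilon$ is an $S_\epsilon$-unit.

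It remains to prove the congruence for a fixed $T$ with $\hyp(S\cup S_\epsilon,T)$; then the places of $T$ are unramified in $L/K$, $T\cap(S\cup S_\epsilon)=\emptyset$, and $E_{L,S_\epsilon}^T$ is torsionfree. The first step is that, for $z\in\mathcal H(G)$, the central element $z\delta_T(0)\in\zeta(\ZZ[G])$ annihilates $(\mathcal O_{L,S_\epsilon}/\mathfrak M_L^T)^\times$: exactly as in the abelian case each local factor $1-N(v)\phi_{w_v}^{-1}$ kills the residue-field units $\kappa(w_v)^\times$, and since $\delta_{T_v}(0)=\nr(1-N(v)\phi_{w_v}^{-1})$ is central it kills the entire $G$-orbit of $\kappa(w_v)^\times$, i.e.\ all of $(\mathcal O_{L,S_\epsilon}/\mathfrak M_L^{T_v})^\times$, whence the claim after multiplying over $v\in T$. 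Consequently $\epsilon^{z\delta_T(0)}\in E_{L,S_\epsilon}^T$. The second step is to show that $\epsilon^{z\delta_T(0)}$ is a $z\omega_L$-th power inside $E_{L,S_\epsilon}^T$, which may be checked after tensoring with $\ZZ_p$ for each $p$. One enlarges $T$ to $T':=T\cup T_{v_0(p)}$, so that $z\omega_L$ and $z\delta_{T_{v_0}}(0)$ differ by the unit $\nr(c)$ and hence $z\omega_L\cdot E_{L,S_\epsilon}^T(p)=z\delta_{T_{v_0}}(0)\cdot E_{L,S_\epsilon}^T(p)$, and then one applies $SBS(L/K,S)$ to $T'$: since $\omega_L\in\mathcal I(G)$, one has $x\omega_L\in\mathcal H(G)\cdot\mathcal I(G)=\mathcal H(G)$ by \eqref{c7-eqn:HI_equals_H}, so the annihilation of $\cl_L^{T'}$ combined with the relation $(\epsilon^{z\delta_{T'}(0)})=\mathfrak a^{zx\omega_L\theta_S^{T'}}$ and the congruence $\epsilon^{z\delta_{T'}(0)}\equiv1\bmod\mathfrak M_L^{T'}$ (the first step for $T'$) forces $\epsilon^{z\delta_T(0)}$ into $z\delta_{T_{v_0}}(0)\cdot E_{L,S_\epsilon}^T(p)$. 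This requires $\mathfrak a$ to be coprime to $T'$, which is arranged by replacing $\mathfrak a$ with a suitable coprime representative and tracking the resulting change of $\epsilon$ and of $S_\epsilon$.

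The genuine difficulty is concentrated in the last step: $SBS$ controls ideal classes in $\cl_L^{T'}$, but $\epsilon$ is a unit and so invisible there, so one must pass through the defining relation $(\epsilon)=\mathfrak a^{x\omega_L\theta_S}$; this makes it necessary both to reduce carefully to $\mathfrak a$ coprime to $T'$ and to understand the relation between $S_\mathfrak a$ and $S_\epsilon$ (in particular, that the generator produced from the ray-class annihilation is again an $S_\epsilon$-unit, so that the $z\omega_L$-th root lands in $E_{L,S_\epsilon}^T$). By contrast, the integrality of $x\omega_L\theta_S$ and the construction of $\epsilon$ are essentially bookkeeping around Lemma \ref{c7-lem:Coates} and the involution $j$.
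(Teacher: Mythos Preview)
The paper does not give a detailed argument, merely citing \cite[Prop.~3.9]{c7-Ni11}. Your overall strategy matches that reference: deduce integrality and principality from $SBS$ applied to $\cl_L^{T_{v_0}}$, build the anti-unit as $\epsilon=\beta^{1-j}$, and verify the congruence by applying $SBS$ to suitable ray class groups. However, two steps are incorrectly justified and the final one is not really carried out.

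First, you repeatedly use that $\nr(c)\in\zeta(\ZZ_p[G])^\times$ for $c\in\ZZ_p^\times$, e.g.\ to pass from $x\cdot\tfrac12\theta_S^{T_{v_0}}\in\zeta(\ZZ[G])$ to $x\cdot\tfrac12\omega_L\theta_S\in\zeta(\ZZ_p[G])$. This is false in general: for $G=S_3$ and $p=3$ the element $\nr(c)=(c,c,c^2)$ lies in $\zeta(\mathfrak M_3(S_3))$ but not in $\zeta(\ZZ_3[S_3])$. The correct move---which you do use later---is that $\nr(c)\in\mathcal I_p(G)^\times$, so $x\,\nr(c)\in\mathcal H_p(G)\cdot\mathcal I_p(G)=\mathcal H_p(G)$ by \eqref{c7-eqn:HI_equals_H}, and then one applies the local form of $SBS$ directly to $x\,\nr(c)$. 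Second, your argument that $z\delta_{T_v}(0)$ annihilates $\bigoplus_{w\mid v}\kappa(w)^\times$ is wrong as stated: centrality of $\nr(\lambda_v)$ does \emph{not} imply it kills the induced module just because $\lambda_v=1-N(v)\phi_{w_v}^{-1}$ kills the single summand $\kappa(w_v)^\times$. The point is rather that $\bigoplus_{w\mid v}\kappa(w)^\times\cong\ZZ[G]/\ZZ[G]\mu_v$ with $\mu_v=\phi_{w_v}-N(v)$ is cyclic, and $z\,\nr(\mu_v)=(z\mu_v^\ast)\,\mu_v\in\ZZ[G]\mu_v$ by the very definition of $\mathcal H(G)$ via generalised adjoints; centrality then propagates annihilation from the generator to the whole module. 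Finally, your last step does not explain how the prime-by-prime argument yields a single $\epsilon_T$ independent of both $p$ and $z$, nor why it lands in $E_{L,S_\epsilon}^T$ rather than merely $E_{L,S_{\mathfrak a}}^T$. A cleaner route is to apply $SBS$ directly to $\cl_L^T$ to obtain an anti-unit $\epsilon_T$ with $(\epsilon_T)=\mathfrak a^{x\theta_S^T}$ and $\epsilon_T\equiv1\bmod\mathfrak M_L^T$; then $(\epsilon^{z\delta_T(0)})=(\epsilon_T^{z\omega_L})$ as ideals and the quotient is a global anti-unit congruent to $1$ modulo $\mathfrak M_L^T$, hence a root of unity in the torsion-free group $E_L^T$, hence equal to $1$.
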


\begin{proof}
	The proof is similar to \cite[Prop.~3.9]{c7-Ni11}.
\end{proof}

\begin{remark}
	In order to prove $BS(L/K,S)$ and $B(L/K,S)$, it thus suffices to show
	that, for every prime $p$, we have:
	\[
	\mathcal{H}^{}_p (G) \cdot \frac{1}{2} \cdot \theta_S^T\, \subseteq\,
	\Ann^{}_{\zeta(\ZZ^{}_p[G])} (\cl_L^T(p))
	\]
	for all finite sets $T$ of $K$ such that $\hyp(S,T)$ holds.
	We refer to this property as $SBS(L/K,S,p)$.
	This is in fact not much stronger than
	$BS(L/K,S,p)$. Nomura \cite[Proposition 4.2]{c7-No14b}
	showed that for odd $p$ in fact $SBS(L/K,S,p)$ and
	$BS(L/K,S,p)$ are equivalent.
\end{remark}

\begin{remark}
	Replacing the denominator ideal $\mathcal{H}(G)$ by the central conductor
	\index{central conductor}
	$\mathcal{F}(G)$ one can formulate a weaker variant
	$SBS_w(L/K,S)$ such that $SBS_w(L/K,S)$ implies $BS_w(L/K,S)$
	(we refrain from calling this the ``weak strong Brumer--Stark 
	property'' for obvious reasons). \exend
	\index{Brumer--Stark property!weak}
\end{remark}
\index{Stickelberger element|)}

\section{Relations to further conjectures and results} \label{c7-sec:results}

\subsection{The relation to the equivariant Tamagawa number conjecture}

We only give a vague description of the statement of the equivariant Tamagawa number 
conjecture (ETNC) 
\index{equivariant Tamagawa number conjecture}
for the relevant Tate motive \index{Tate motive}
as formulated by Burns and Flach \cite{c7-BF01}.

Let $L/K$ be a finite Galois extension of number fields with Galois group $G$.
We regard $h^{0}(\mathrm{Spec}(L))$ as a motive defined over $K$ and with coefficients in the semisimple algebra $\QQ[G]$.
Let  $\mathfrak{A}$ be a $\ZZ$-order such that $\ZZ[G] \subseteq \mathfrak{A} \subseteq \QQ[G]$.
The ETNC for the pair $(h^{0}(\mathrm{Spec}(L)), \mathfrak{A})$ asserts
that a certain canonical element $T\Omega(L/K, \mathfrak{A})$  of the relative 
algebraic $K$-group $K_{0}(\mathfrak{A},\RR)$ vanishes.
This element incorporates the leading coefficients of the Artin
$L$-functions attached to the irreducible characters of $G$ and certain
cohomological Euler characteristics.

We note that the ETNC for the pair $(h^{0}(\mathrm{Spec}(L)), \ZZ[G])$,
the Lifted Root Number Conjecture of Gruenberg, Ritter and Weiss 
\cite{c7-GRW99}, the vanishing of the element $T\Omega(L/K,0)$
defined in \cite[\S 2.1]{c7-Bu01}, 
and the `leading term conjecture at $s=0$' of \cite{c7-BB07} are all
equivalent (see \cite[Theorems 2.3.3 and 2.4.1]{c7-Bu01} and 
\cite[Remarks 4.3 and 4.5]{c7-BB07}).

Moreover, Burns and Flach \cite[\S 3, Cor.~1]{c7-BF03}
have shown that the ETNC for the pair $(h^{0}(\mathrm{Spec}(L)), \mathfrak{M}(G))$,
where $\mathfrak{M}(G)$ is a maximal order, is equivalent
to the strong Stark conjecture (as formulated by Chinburg 
\cite[Conj.~2.2]{c7-Ch83}) for $L/K$.
\index{strong Stark conjecture}

If $L/K$ is a Galois CM-extension, the ETNC 
(over the maximal order, and in general away from its $2$-primary part)
naturally decomposes into a plus and a minus part.
The following result is due to the author \cite[Thm.~3.5]{c7-Ni11}.

\begin{theorem} \label{c7-thm:SS-implies-BSw}
	Let\/ $L/K$ be a Galois CM-extension of number fields with Galois group\/ $G$.
	Let\/ $\mathfrak{M}(G)$ be a maximal order in\/ $\QQ[G]$ containing\/ $\ZZ[G]$.
	Then, the minus part of the 
	ETNC for the pair\/ $(h^{0}(\mathrm{Spec}(L)), \mathfrak{M}(G))$
	implies\/ $BS_w(L/K,S)$ and\/ $B_w(L/K,S)$ for all finite sets\/ $S$
	of places of\/ $K$ containing\/ $S_{\infty} \cup S_{\ram}$.
\end{theorem}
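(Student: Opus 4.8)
The plan is to reduce the statement to a known descent result relating the ETNC over the maximal order to the strong Brumer--Stark property $SBS_w(L/K,S)$, and then invoke the implications already recorded in this section. First I would recall that, since $L/K$ is a CM-extension, the complex conjugation $j$ gives an idempotent decomposition $e^{\pm} = \tfrac12(1\pm j)$ of the relevant $K$-theory and cohomology away from $2$, so the ETNC for $(h^0(\mathrm{Spec}(L)),\mathfrak{M}(G))$ splits as a plus part and a minus part, and only the minus part carries information about $\theta_S$ (by Remark \ref{c7-rem:vanishing}, the Stickelberger element is supported on the minus part). The point is that the minus part of the vanishing of $T\Omega(L/K,\mathfrak{M}(G))$ is an identity in the relative $K$-group $K_0(\mathfrak{M}(G),\RR)^{-}$ which, because $\mathfrak{M}(G)$ is a maximal order, can be made completely explicit via reduced norms and the description of $K_0$ of a maximal order in terms of ideals of $\zeta(\mathfrak{M}(G))$.

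Next I would translate that explicit identity into a statement about $\theta_S^T$. Choosing an auxiliary set $T$ with $\hyp(S,T)$ satisfied makes $E_{L,S}^T$ torsionfree, which kills the finite contributions from roots of unity and replaces the leading-coefficient data by the value $\theta_S^T$ itself via the exact sequence of $\ZZ[G]$-modules in \S1.1; the cohomological Euler characteristic appearing in $T\Omega$ then computes, on the minus part, the Fitting-type invariant of $\cl_L^T(p)$. Combining this with the explicit $K_0$-description of the maximal order, one obtains exactly
\[
\mathcal{F}(G)\cdot \tfrac12\cdot\theta_S^T \subseteq \Ann_{\zeta(\ZZ[G])}(\cl_L^T),
\]
which is the weak strong Brumer--Stark property $SBS_w(L/K,S)$ (in the variant where $\mathcal H(G)$ is replaced by the central conductor $\mathcal F(G)$). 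Here the central conductor enters precisely because passing from $\zeta(\mathfrak{M}(G))$-coefficients back to $\zeta(\ZZ[G])$-coefficients costs a factor lying in $\mathcal F(G) = \mathcal F(\ZZ[G])$, by the very definition of the conductor $\mathcal F(\mathfrak{A}) = \{x\in\zeta(\mathfrak M)\mid x\mathfrak M\subseteq\mathfrak A\}$.

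Finally, I would invoke the implication $SBS_w(L/K,S)\Rightarrow BS_w(L/K,S)$ noted at the end of \S3.7 (the weak analogue of Proposition \ref{c7-prop:SBS-BS}), and then $BS_w(L/K,S)\Rightarrow B_w(L/K,S)$ from Lemma \ref{c7-lem:BS-implies-B}, to conclude. The main obstacle I anticipate is the middle step: making the minus part of the ETNC over the maximal order genuinely explicit and matching its cohomological term with the annihilator of the ray class group $\cl_L^T$. This requires careful bookkeeping of the perfect complex computing $R\Gamma$ of the relevant Tate motive with the $T$-modification, control of the transpose/duality (the $\sharp$-involution) so that one lands on $\theta_S^T$ rather than its contragredient, and a clean handling of the $2$-primary part, where the plus/minus decomposition is not available and a separate argument (or the exclusion of $p=2$ in the relevant statements) is needed. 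Once that dictionary is in place, everything else is formal given the results already assembled in the excerpt.
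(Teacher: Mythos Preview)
The paper does not actually give a proof of this theorem; it simply attributes the result to \cite[Thm.~3.5]{c7-Ni11} and moves on. So there is no detailed argument in the paper to compare against, only the surrounding context.

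That said, your outline is consistent with the strategy the paper itself signals. In the remark following Theorem~\ref{c7-thm:ETNC-implies-BS} the author notes that the proofs of the strong analogue go by first establishing (a refinement of) $SBS(L/K,S,p)$ and then applying the implication of Proposition~\ref{c7-prop:SBS-BS}; your plan is exactly the weak version of this, passing through $SBS_w$ and then using the weak analogues of Proposition~\ref{c7-prop:SBS-BS} and Lemma~\ref{c7-lem:BS-implies-B}. Your identification of why the central conductor $\mathcal{F}(G)$ appears --- namely that descending annihilators from $\zeta(\mathfrak{M}(G))$-coefficients to $\zeta(\ZZ[G])$-coefficients costs a factor in $\mathcal{F}(G)$ by definition --- is precisely the mechanism that distinguishes the weak statement (over the maximal order) from the strong one (over $\ZZ[G]$).

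You are also right that the real work is in the middle step, and your proposal is honest about this: what you have written is a plan rather than a proof. Making the minus part of $T\Omega(L/K,\mathfrak{M}(G))$ explicit requires choosing a suitable perfect complex representing the relevant \'etale cohomology (with the $T$-modification), computing its class in $K_0(\mathfrak{M}(G),\RR)^{-}$ via reduced norms, and matching the arithmetic term against the Fitting invariant of $\cl_L^T$ over the maximal order; this is carried out in \cite{c7-Ni11} and is not something one can compress into a paragraph. Your handling of the $2$-primary issue is also appropriate: the plus/minus idempotents require inverting $2$, and indeed the prime-by-prime refinements in the paper are stated only for odd $p$.
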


There is also a prime-by-prime version of Theorem
\ref{c7-thm:SS-implies-BSw} (see \cite[Thm.~4.1]{c7-Ni11}).
Combined with \cite[Cor.~2]{c7-Ni11b} this leads to the following
unconditional result (see also \cite[Cor.~4.2]{c7-Ni11}). 
We denote the maximal totally real subfield of $L$
by $L^+$ and let $L'$ be the Galois closure of $L$ over $\QQ$.
For a natural number $n$ we let $\zeta_n$ be a primitive $n$-th root of unity.

\begin{theorem}\label{c7-thm:weak-results}
	Let\/ $p$ be an odd prime. Assume that no prime of\/ $L^+$
	above\/ $p$ splits in\/ $L$ whenever\/ $L' \subseteq (L')^+(\zeta_p)$.
	Then, $BS_w(L/K, S, p)$ and\/ $B_w(L/K, S, p)$
	are true for every set\/ $S$ of places of\/ $K$ containing\/ 
	$S_{\ram} \cup S_{\infty}$. \qed
\end{theorem}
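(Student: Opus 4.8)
The plan is to reduce everything to the prime-by-prime version of Theorem \ref{c7-thm:SS-implies-BSw}, i.e.\ to the statement that the $p$-part of the minus part of the ETNC for the pair $(h^0(\mathrm{Spec}(L)),\mathfrak M(G))$ implies $BS_w(L/K,S,p)$ and $B_w(L/K,S,p)$, and then to invoke the unconditional input from \cite{c7-Ni11b}. So the first step is purely formal: by Lemma \ref{c7-lem:dependence-S} it suffices to treat the minimal admissible set $S = S_{\ram}\cup S_{\infty}$, and by Lemma \ref{c7-lem:BS-implies-B} it suffices to prove the Brumer--Stark statement $BS_w(L/K,S,p)$, since this implies $B_w(L/K,S,p)$. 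Moreover, all the conjectures in play are stable under passing from $L/K$ to $L/L^+$ (base change along the totally real subfield does not change the minus part), so one may as well assume $K = L^+$ from the outset; this is the usual first reduction and I would state it explicitly since it is what makes the hypothesis ``no prime of $L^+$ above $p$ splits in $L$'' the natural one.

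The heart of the matter is then the following: by \cite[Cor.~2]{c7-Ni11b} (building on the Ritter--Weiss / Kakde proof of the equivariant main conjecture, cf.\ \cite{c7-RW11,c7-Ka13}), the $p$-part of the minus part of the ETNC over the maximal order for $(h^0(\mathrm{Spec}(L)),\mathfrak M(G))$ holds \emph{provided} a certain vanishing of a $\mu$-invariant, or equivalently a non-splitting condition on the places above $p$, is satisfied. Concretely, the main conjecture of equivariant Iwasawa theory pins down the minus part of the relevant Iwasawa module up to the characteristic ideal of the cyclotomic $\ZZ_p$-extension; the only obstruction to deducing the finite-level ETNC statement is the possible non-triviality of the cohomology of the trivialising module at $p$, which is governed precisely by whether some prime of $L^+$ above $p$ splits in $L$. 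The hypothesis in the theorem — no prime of $L^+$ above $p$ splits in $L$ whenever $L'\subseteq (L')^+(\zeta_p)$ — is exactly the condition isolated in \cite{c7-Ni11b} under which this obstruction vanishes (the Galois-closure clause accounts for the fact that the relevant $\mu = 0$ statement, known after Ferrero--Washington only in the abelian-over-$\QQ$ situation, has to be available for $L'$). Granting that, one concludes that the $p$-part of the minus ETNC over $\mathfrak M(G)$ holds unconditionally under our hypothesis.

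Putting the two steps together: the prime-by-prime form of Theorem \ref{c7-thm:SS-implies-BSw} (see \cite[Thm.~4.1]{c7-Ni11}) turns the now-established $p$-part of the minus ETNC over the maximal order into $BS_w(L/K,S,p)$, and then Lemma \ref{c7-lem:BS-implies-B} gives $B_w(L/K,S,p)$ as well. Finally Lemma \ref{c7-lem:dependence-S} propagates both statements from $S_{\ram}\cup S_{\infty}$ to an arbitrary finite $S \supseteq S_{\ram}\cup S_{\infty}$, which completes the argument. The step I expect to be the genuine obstacle — and the one I would not attempt to reprove here — is the unconditional input from \cite{c7-Ni11b}: verifying that the non-splitting hypothesis really does force the vanishing of the $\mu$-type invariant and hence the descent from the Iwasawa-theoretic main conjecture to the finite-level ETNC at $p$. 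Everything else is a formal assembly of results already recorded in the excerpt.
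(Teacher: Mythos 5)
Your proposal follows exactly the route the paper indicates: the theorem is obtained by combining the prime-by-prime version of Theorem \ref{c7-thm:SS-implies-BSw} (i.e.\ \cite[Thm.~4.1]{c7-Ni11}) with the unconditional validity of the minus $p$-part of the ETNC over the maximal order established in \cite[Cor.~2]{c7-Ni11b} under the stated non-splitting hypothesis, and the paper gives no proof beyond precisely this assembly, so your identification of where the genuine content lies is correct. One caveat: your aside that one ``may as well assume $K = L^+$'' is not a valid reduction --- the statements $BS_w(L/K,S,p)$ and $B_w(L/K,S,p)$ involve the Stickelberger elements and the ring $\zeta(\ZZ[G])$ for the full group $G = \Gal(L/K)$ and are not implied by the corresponding statements for the quadratic extension $L/L^+$; fortunately your actual argument never uses this reduction, since the ETNC input is invoked for the pair $(h^{0}(\mathrm{Spec}(L)), \mathfrak{M}(G))$ with $G = \Gal(L/K)$.
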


\begin{remark}
	We stress that for a given extension $L/K$ the hypotheses on $p$
	in Theorem \ref{c7-thm:weak-results}
	are fulfilled by all primes that do not ramify in $L'$.
	In particular, the hypotheses are satisfied by all but finitely many
	primes.  \exend
\end{remark}

\begin{theorem}\label{c7-thm:ETNC-implies-BS}
	Let\/ $L/K$ be a Galois CM-extension of number fields with Galois group\/ $G$.
	Let\/ $p$ be an odd prime. Then, the minus $p$-part of the 
	ETNC for the pair\/ $(h^{0}(\mathrm{Spec}(L)), \ZZ[G])$
	implies\/ $BS(L/K,S,p)$ and\/ $B(L/K,S,p)$ for all finite sets\/ $S$
	of places of\/ $K$ containing\/ $S_{\infty} \cup S_{\ram}$.
\end{theorem}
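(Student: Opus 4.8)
The plan is to follow and refine the argument behind Theorem~\ref{c7-thm:SS-implies-BSw}, replacing the maximal order by the finer order $\ZZ_p[G]$ and exploiting the plus/minus decomposition that becomes available at the odd prime $p$, and then to feed the outcome into the strong Brumer--Stark property. By the remark following Proposition~\ref{c7-prop:SBS-BS} it is enough to establish $SBS(L/K,S,p)$, that is
\[
\mathcal{H}_p(G)\cdot\tfrac12\,\theta_S^T\ \subseteq\ \Ann_{\zeta(\ZZ_p[G])}\!\bigl(\cl_L^T(p)\bigr)
\]
for every finite set $T$ of places of $K$ with $\hyp(S,T)$ satisfied: indeed $BS(L/K,S,p)$ then follows from the prime-by-prime form of Proposition~\ref{c7-prop:SBS-BS} (an equivalence for odd $p$ by Nomura's theorem), and $B(L/K,S,p)$ from the prime-by-prime form of Lemma~\ref{c7-lem:BS-implies-B}. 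Since $p$ is odd, $\tfrac12\in\ZZ_p^{\times}$, so the factor $\tfrac12$ may be ignored, and the idempotents $e^{\pm}=\tfrac12(1\pm j)$ give splittings $\ZZ_p[G]=\ZZ_p[G]^{+}\times\ZZ_p[G]^{-}$ and $\cl_L^T(p)=\cl_L^T(p)^{+}\oplus\cl_L^T(p)^{-}$. By the order-of-vanishing formula \eqref{c7-eqn:order-of-vanishing}, $j$ acts as $-1$ on $\theta_S^T$, so its plus part vanishes and $\theta_S^T$ automatically annihilates $\cl_L^T(p)^{+}$; thus it remains to prove
\[
\mathcal{H}_p(G)^{-}\cdot\theta_S^T\ \subseteq\ \Ann_{\zeta(\ZZ_p[G]^{-})}\!\bigl(\cl_L^T(p)^{-}\bigr).
\]

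The crucial step is to distil from the ETNC a quadratic presentation over $\ZZ_p[G]^{-}$ whose reduced norm is the value $\theta_S^T$ itself (not a leading term). One works with the minus $p$-part of a suitable $(S,T)$-modified complex of $\ZZ_p[G]$-modules (a Tate-sequence-type complex, with cohomology built from $E_{L,S}^T$, the lattice $X_{L,S}$ and the ray class groups); because $p$ is odd and one passes to the minus part, this complex is quasi-isomorphic to a complex of finite $\ZZ_p[G]^{-}$-modules concentrated in a single degree, with no regulator entering, whose unique nonzero cohomology group is a $\sharp$-transpose of a finite module $M^{-}$ linked to $\cl_L^T(p)^{-}$ by the minus parts of the exact sequences of Section~1 relating the various ray class groups. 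The minus $p$-part of the ETNC for $(h^0(\mathrm{Spec}(L)),\ZZ[G])$ then says exactly that this complex is represented by a homomorphism $h\in M_{d\times d}(\ZZ_p[G]^{-})$ with $\mathrm{cok}(h)\cong M^{-}$ and $\nr(h)$ equal to $\theta_S^T$ up to a unit of $\zeta(\ZZ_p[G]^{-})^{\times}$; here one uses Burns's reinterpretation of the ETNC in terms of refined Euler characteristics and non-commutative Fitting invariants.

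The remainder is formal. The generalised adjoint of $h$ satisfies $h^{\ast}h=hh^{\ast}=\nr(h)\cdot 1_{d\times d}$, and for any $x\in\mathcal{H}_p(G)^{-}$ one has $xh^{\ast}\in M_{d\times d}(\ZZ_p[G]^{-})$ by the very definition of the denominator ideal; hence
\[
x\,\nr(h)\cdot(\ZZ_p[G]^{-})^{d}\ =\ x\,h^{\ast}h\,(\ZZ_p[G]^{-})^{d}\ \subseteq\ h\,(\ZZ_p[G]^{-})^{d},
\]
so $x\,\nr(h)$ annihilates $\mathrm{cok}(h)=M^{-}$. Absorbing the unit, transporting the statement through the anti-involution $\sharp$ — which turns a presentation of a module into one of its $\sharp$-transpose, is compatible with the formation of annihilators, and, for the finite module at hand, identifies the annihilator of the $\sharp$-transpose with that of the Pontryagin dual and hence with that of $M^{-}$ itself — and finally propagating the resulting annihilator statement for $M^{-}$ along the exact sequences of Section~1, one obtains $\mathcal{H}_p(G)^{-}\cdot\theta_S^T\subseteq\Ann_{\zeta(\ZZ_p[G]^{-})}(\cl_L^T(p)^{-})$, as required.

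I expect the second paragraph to be the main obstacle. The ETNC is a priori only an abstract vanishing statement in a relative $K$-group, and converting it into a genuine \emph{square} presentation of a \emph{finite} $\ZZ_p[G]^{-}$-module whose reduced norm is precisely $\theta_S^T$ requires: a careful choice of the $(S,T)$-modified complex so that its minus part is cohomologically concentrated in one degree with no spurious free summands and no surviving regulator; a treatment of the characters with $r_S(\chi)>0$, for which $\theta_S^T$ vanishes anyway but for which no cutting-out idempotent is available over $\ZZ_p[G]^{-}$; control of projective dimensions over $\ZZ_p[G]^{-}$ at odd $p$ (where the hypothesis that $p$ is odd is essential, both for the plus/minus splitting and to avoid $2$-torsion pathologies); and a meticulous bookkeeping of the involution $\sharp$, the Pontryagin duality, and the exact sequences of Section~1, so that one actually lands on $\theta_S^T$ and on $\cl_L^T(p)^{-}$ rather than on some twist or dual of them. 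By contrast, the reduction to $SBS(L/K,S,p)$ in the first paragraph and the denominator-ideal manipulation in the third are routine.
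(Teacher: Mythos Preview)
Your outline is essentially the same strategy as the one the paper invokes: reduce to $SBS(L/K,S,p)$ (the paper's remark immediately after the theorem confirms this is exactly what the cited proofs do), use the minus idempotent at odd $p$, extract from the ETNC a square presentation over $\ZZ_p[G]^{-}$ with reduced norm $\theta_S^T$, and then run the denominator-ideal/generalised-adjoint trick. The paper itself gives no argument at all beyond citing \cite{c7-Gr07}, \cite[Thm.~5.1]{c7-Ni11} and \cite[Proof of Cor.~3.11~(iii)]{c7-Bu17}, so your sketch is in fact more informative than the paper's own proof.

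There is one genuine subtlety you gloss over and which the paper's citation structure highlights: the argument you describe in your second paragraph is precisely what \cite[Thm.~5.1]{c7-Ni11} carries out, but that result required the additional hypothesis that $\mu_L(p)$ be a cohomologically trivial $G$-module. Removing this hypothesis is not a matter of bookkeeping; it is the content of Burns's later work \cite{c7-Bu17}, and your sketch does not indicate where the obstruction arises or how it is overcome. Concretely, the step ``this complex is quasi-isomorphic to a complex of finite $\ZZ_p[G]^{-}$-modules concentrated in a single degree'' and the subsequent identification of its cohomology with (a transpose of) something controlling $\cl_L^T(p)^{-}$ is exactly where cohomological triviality of $\mu_L(p)$ enters in the original argument. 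Your self-critical final paragraph correctly flags the second paragraph as the hard part, but you should be aware that ``careful choice of complex'' alone does not suffice in general: a genuinely new ingredient from \cite{c7-Bu17} is needed when $\mu_L(p)$ is not cohomologically trivial.
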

\begin{proof}
	When $\mu_L(p)$ is a cohomologically trivial $G$-module, this is
	due to Greither \cite{c7-Gr07} (if $G$ is abelian) and to the author
	\cite[Thm.~5.1]{c7-Ni11}; this includes the cases $\mu_L(p) = 1$
	and $p \nmid |G|$. The general case follows from recent
	work of Burns \cite[Proof of Cor.~3.11~(iii)]{c7-Bu17}.
\end{proof}
\begin{remark}
	The proofs of Theorem \ref{c7-thm:ETNC-implies-BS} actually show
	that a refinement of $SBS(L/K,S,p)$ holds 
	\index{Brumer--Stark property!strong}
	and then use an argument similar to
	Proposition \ref{c7-prop:SBS-BS}.  \exend
\end{remark}
There are meanwhile quite a few cases where the ETNC has been verified
for certain non-abelian extensions. Here we only mention
the following result of Johnston and the author \cite[Thm.~4.6]{c7-JN16}.

\begin{theorem} \label{c7-thm:Aff(q)}
	\index{group!of affine transformations}
	Let\/ $L/\QQ$ be a Galois extension with Galois group\/ $G \simeq \Aff(q)$,
	where\/ $q = \ell^n$ is a prime power. Then, the ETNC for the pair
	$(h^{0}(\mathrm{Spec}(L)), \mathfrak{M}(G))$ holds and the $p$-part
	of the ETNC for the pair\/
	$(h^{0}(\mathrm{Spec}(L)), \ZZ[G])$ holds for every prime\/ $p \neq \ell$.
\end{theorem}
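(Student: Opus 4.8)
The plan is to reduce the statement to more tractable pieces using the structure of $\Aff(q)$ and the tools assembled earlier in the article. The key structural input is that $\Aff(q) = \FF_q \rtimes \FF_q^\times$ has commutator subgroup $\FF_q$, which is an $\ell$-group, and cyclic quotient $\FF_q^\times$. First I would record the character theory: apart from the one-dimensional characters inflated from $\FF_q^\times$, the group $\Aff(q)$ has a single faithful irreducible character of degree $q-1$, induced from any nontrivial character of $\FF_q$. This makes $\Aff(q)$ monomial and gives very explicit Wedderburn data, which feeds into an explicit description of $\zeta(\QQ[G])$, the maximal order $\mathfrak{M}(G)$, and the leading terms of the relevant Artin $L$-functions.

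Next I would split the ETNC statement prime-by-prime, using the standard fact that vanishing of $T\Omega(L/K,\mathfrak{A})$ in $K_0(\mathfrak{A},\RR)$ is equivalent to the vanishing of all its localisations, together with the decomposition of the integrality data recalled in the remark after Conjecture \ref{c7-conj:weak-integrality}. For the maximal order statement, by Burns--Flach \cite{c7-BF03} the ETNC for $(h^0(\mathrm{Spec}(L)),\mathfrak{M}(G))$ is equivalent to the strong Stark conjecture of Chinburg for $L/\QQ$; since $L/\QQ$ is monomial, the strong Stark conjecture follows from its validity for the abelian subquotients by functoriality and induction on characters, and the abelian case over $\QQ$ is known. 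For the integral statement at primes $p\neq\ell$, the point is that $p\nmid |\FF_q| = q$, so $p$ does not divide the order of the commutator subgroup; by Proposition \ref{c7-prop:best-denominators} (and the hybrid/decomposition philosophy behind Example \ref{c7-ex:Aff(q)}) the group ring $\ZZ_p[\Aff(q)]$ is "close to" a product of $\ZZ_p[\FF_q^\times]$ and a maximal order, so the $p$-part of the ETNC for $\ZZ_p[G]$ is controlled by the already-established maximal-order statement together with the abelian case for the quotient $\FF_q^\times$.

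The main obstacle is the integral statement at the primes $p\neq\ell$ that nonetheless divide $|G| = q(q-1)$, i.e. primes dividing $q-1$: there $\ZZ_p[\Aff(q)]$ is no longer maximal, and one must genuinely compare the ETNC for $\ZZ_p[G]$ with that for a maximal order. Here I would use the fact, from \cite{c7-JN16}, that at such $p$ the group ring decomposes as a fibre product along the abelian quotient $\FF_q^\times$ of $\ZZ_p[\FF_q^\times]$ with a maximal $\ZZ_p$-order in the $(q-1)$-dimensional block, so that the relevant $K$-theory and the Euler-characteristic terms decompose compatibly; the ETNC for each factor is then either the abelian case over $\QQ$ or the maximal-order case treated above, and one checks that the canonical element respects this decomposition. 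Assembling these local statements, and invoking Theorem \ref{c7-thm:ETNC-implies-BS} only implicitly (the present theorem is really about the ETNC itself, not its Brumer--Stark consequences), yields both assertions.

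I expect the character-theoretic and fibre-product bookkeeping to be entirely routine given \cite{c7-JN16}; the delicate point is verifying that the canonical element $T\Omega$ is compatible with the fibre-product decomposition of $\ZZ_p[\Aff(q)]$, which requires tracking the cohomological Euler characteristic through the localisation sequence in relative $K$-theory and matching it against the analytic leading terms block by block.
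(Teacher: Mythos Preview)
The paper does not actually prove this theorem: it is stated without proof as a quotation of \cite[Thm.~4.6]{c7-JN16}, introduced by the sentence ``Here we only mention the following result of Johnston and the author''. So there is no proof in the present article to compare your proposal against; the intended ``proof'' here is simply the citation.

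That said, your outline is broadly in the spirit of the argument in \cite{c7-JN16}: one exploits that the commutator subgroup of $\Aff(q)$ is the $\ell$-group $\FF_q$, uses the resulting hybrid structure of $\ZZ_p[\Aff(q)]$ for $p \neq \ell$ to peel off a maximal-order piece, and reduces the remaining abelian piece over $\QQ$ to the known case of the ETNC for absolutely abelian fields (Burns--Greither, Flach). Two points in your sketch deserve more care. First, your reduction of the maximal-order ETNC to the strong Stark conjecture ``for abelian subquotients by functoriality and induction'' is too quick: Brauer induction alone does not descend the strong Stark conjecture from subquotients to $G$, and the actual argument in \cite{c7-JN16} instead uses the hybrid decomposition together with explicit control of the $(q-1)$-dimensional block rather than a bare induction argument. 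Second, your worry that primes $p \mid (q-1)$ are ``the main obstacle'' is slightly misplaced: once $p \nmid |\FF_q|$, Proposition~\ref{c7-prop:best-denominators} and the $N$-hybrid property (Definition~\ref{c7-def:hybrid}) already give the required splitting of $\ZZ_p[\Aff(q)]$, and the compatibility of $T\Omega$ with this splitting is part of the general functoriality package of \cite{c7-BF01}, not something that needs to be re-verified by hand. The genuinely hard prime is $p=\ell$, which is precisely the one excluded from the statement (and treated separately in \cite{c7-JN17c}).
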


\begin{remark}
	An extension $L/\QQ$ as in Theorem \ref{c7-thm:Aff(q)} never happens to
	be a CM-extension. However, Burns' conjecture on the annihilation 
	of class groups \cite{c7-Bu11} predicts non-trivial annihilators 
	for any Galois extension of number fields. Theorem \ref{c7-thm:Aff(q)}
	can then be combined with Example \ref{c7-ex:Aff(q)} 
	to show that Burns' conjecture
	holds in this case (up to a factor $2$ if $\ell = 2$).
	This is \cite[Thm.~7.6]{c7-JN16}.  \exend
\end{remark}

\begin{remark}
	The $\ell$-part of the ETNC in the situation of Theorem 
	\ref{c7-thm:Aff(q)} is considered in recent work
	with Henri Johnston \cite{c7-JN17c}.
	Suppose in addition that $L$ is totally real and 
	Leopoldt's conjecture holds for $L$ at $\ell$.
	Then, the ETNC for the pair
	$(h^{0}(\mathrm{Spec}(L))(1), \ZZ[G])$ holds.
	Moreover, the ETNC for the pair
	$(h^{0}(\mathrm{Spec}(L)), \ZZ[G])$ holds if $\ell$ is at most
	tamely ramified (see \cite[Cor.~10.6]{c7-JN17c}).
	For the proof one has to verify the `$\ell$-adic Stark
	conjecture at $s=1$' for $L/\QQ$ which might be seen as an analogue
	at $s=1$ of Gross' conjecture \ref{c7-conj:Gross-II} below.
\end{remark}

\subsection{$p$-adic Artin $L$-functions}

Let $p$ be an odd prime and let $K$ be a totally real field.
Let $\mathcal{L}/K$ be a Galois extension of $K$ such that $\mathcal{L}$ is totally real and contains the cyclotomic $\ZZ_{p}$-extension $K_{\infty}$ of
$K$ and $[\mathcal{L} : K_{\infty}]$ is finite.
We put $\mathcal{G} := \Gal(\mathcal{L} / K)$ and 
$\Gamma_K := \Gal(K_{\infty} / K) \simeq \ZZ_p$ such that
$\mathcal G \simeq H \rtimes \Gamma$, where 
$H := \Gal(\mathcal{L}/ K_{\infty})$ and $\Gamma = \mathcal{G}/H \simeq
\Gamma_K$. Thus $\mathcal{L} / K$ is a one-dimensional 
$p$-adic Lie extension. 
\index{p-adic Lie extension}
We choose a topological generator $\gamma_K$ of $\Gamma_K$.

We write $\chi_{\mathrm{cyc}}$ for the $p$-adic cyclotomic character
\index{character!cyclotomic}
\[
\chi_{\mathrm{cyc}}:\, \Gal(\mathcal{L}(\zeta_{p})/K) \longrightarrow \ZZ_{p}^{\times},
\]
defined by $\sigma(\zeta) = \zeta^{\chi_{\mathrm{cyc}}(\sigma)}$ for any $\sigma \in \Gal(\mathcal{L}(\zeta_{p})/K)$ and any $p$-power root of unity $\zeta$.
Let $\omega$ and $\kappa$ denote the composition of $\chi_{\mathrm{cyc}}$ with the projections onto the first and second factors of the canonical decomposition $\ZZ_{p}^{\times} = \mu^{}_{\QQ^{}_p} \times (1+p\ZZ^{}_{p})$, respectively;
thus $\omega$ is the Teichm\"{u}ller character.
\index{character!Teichm\"{u}ller}
We note that $\kappa$ factors through $\Gamma_{K}$ and
put $u := \kappa(\gamma_{K})$.

Fix a character $\psi \in \Irr_{\CC_{p}}(\mathcal{G})$
and let $S$ be a finite set of places of $K$ containing all
archimedean places and all places that ramify in $\mathcal{L}/K$.
Note that $S$ in particular  contains the set $S_p$
of all $p$-adic places.
Each topological generator $\gamma_{K}$ of  $\Gamma_{K}$ permits the definition of a 
power series $G^{}_{\psi,S}(T) \in \QQ_{p}^{c} \otimes^{}_{\QQ^{}_{p}} 
Quot(\ZZ^{}_{p} [[T]])$ 
by starting out from the Deligne--Ribet power series 
\index{Deligne--Ribet power series}
for one-dimensional characters of open subgroups 
of $\mathcal{G}$ (see \cite{c7-CN79, c7-DR80, c7-Ba78}) 
and then extending to the general case by using Brauer induction (see \cite{c7-Gr83}).
One then has an equality \index{Artin $L$-function!p-adic}
\[
L^{}_{p,S}(1-s,\psi) = \frac{G^{}_{\psi,S}(u^s-1)}{H^{}_{\psi}(u^s-1)},
\]
where $L_{p,S}(s,\psi): \ZZ_p \rightarrow \CC_p$ 
denotes the `$S$-truncated $p$-adic Artin $L$-function'
(a $p$-adic meromorphic function) attached to $\psi$ 
constructed by Greenberg \cite{c7-Gr83},
and where, for irreducible $\psi$, one has
\[
H_{\psi}(T)\, =\, 
\begin{cases}
\psi(\gamma_{K})(1+T)-1, & \text{if $H \subseteq \ker \psi$,} \\
1, & \text{otherwise.} 
\end{cases}
\]

\subsection{The interpolation property and Gross' conjecture}

Let $p$ be an odd prime and
choose a field isomorphism $\iota: \CC \simeq \CC_p$.
For a character $\psi \in \Irr_{\CC_p}(\mathcal G)$ we put
$\psi^{\iota} := \iota^{-1} \circ \psi \in \Irr_{\CC}(\mathcal G)$.
If $\psi$ is a linear character and $r \geq 1$ is an integer then for every choice of field isomorphism $\iota: \CC \simeq \CC_p$ one
has the interpolation property
\index{interpolation property}
\begin{equation} \label{c7-eqn:interpolation-property}
L_{p,S}(1-r, \psi) =
\iota\left(L_{S}(1-r,(\psi\omega^{-r})^{\iota})\right).
\end{equation}
This can be extended to characters $\psi$ of arbitrary degree provided
that $r \geq 2$ (see \cite[\S 4]{c7-Gr83}). However, the argument fails
in the case $r=1$. Nevertheless, it seems plausible to
conjecture the following.

\begin{conjecture} \label{c7-conj:int-property}
	For each\/ $\psi \in \Irr_{\CC_{p}}(\mathcal{G})$, one has
	\[ \label{c7-eqn:int-at-0}
	L^{}_{p,S}(0, \psi)\, =\,
	\iota\left(L^{}_{S}(0,(\psi\omega^{-1})^{\iota})\right).
	\]	
\end{conjecture}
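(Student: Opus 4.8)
The plan is to deduce \eqref{c7-eqn:int-at-0} from Gross's conjecture on the behaviour of $p$-adic Artin $L$-functions at $s=0$ — parts (i) and (ii) in the notation of the introduction — using that \eqref{c7-eqn:int-at-0} only concerns the \emph{values} at $s=0$, not the leading coefficients. Put $\rho := (\psi\omega^{-1})^{\iota}$. Since $\omega$ is odd while $\psi$ factors through the Galois group of a totally real extension, $\rho$ is totally odd, hence in particular non-trivial, and the order-of-vanishing formula \eqref{c7-eqn:order-of-vanishing} gives
\[
r_S(\rho)\, :=\, \mathrm{ord}_{s=0} L_S(s,\rho)\, =\, \sum_{v \in S_f} \dim_{\CC} V_{\rho}^{G_w},
\]
the archimedean places and the term $\dim_{\CC} V_{\rho}^{G}$ contributing nothing. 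I would then argue separately according to whether $r_S(\rho)$ is positive or zero. (The naive attempt — Brauer induction $\psi = \sum_i n_i \mathrm{Ind}_{\mathcal{H}_i}^{\mathcal{G}} \chi_i$ to linear $\chi_i$, for which \eqref{c7-eqn:int-at-0} is known, together with the projection formula $\psi\omega^{-1} = \sum_i n_i \mathrm{Ind}_{\mathcal{H}_i}^{\mathcal{G}}(\chi_i\cdot\omega^{-1}|_{\mathcal{H}_i})$ on the complex side — is exactly what breaks down at $s=0$: the factor $H_{\psi}$ occurring in the definition of $L_{p,S}(s,\psi)$ is not compatible with induction, so $p$-adic Artin $L$-functions fail to be Brauer-multiplicative at $s=0$, whereas for $r\geq 2$ Greenberg's direct argument avoids the issue. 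The input below is what replaces that step.)

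If $r_S(\rho) \geq 1$, then the right-hand side of \eqref{c7-eqn:int-at-0} vanishes by definition of $r_S(\rho)$, and it remains to show $L_{p,S}(0,\psi) = 0$, i.e.\ $\mathrm{ord}_{s=0} L_{p,S}(s,\psi) \geq 1$; this is a consequence of the order-of-vanishing part of Gross's conjecture, established by Spiess \cite{c7-Sp14} and Burns \cite{c7-Bu17} under suitable hypotheses on $\psi$ (and classical for one-dimensional $\psi$). If $r_S(\rho) = 0$, then $L_S(0,\rho) \neq 0$ and equals the leading coefficient of $L_S(s,\rho)$ at $s=0$; here I would invoke the leading-coefficient part of Gross's conjecture — classical for one-dimensional $\psi$, and in general the theorem of Dasgupta--Kakde--Ventullo \cite{c7-DKV17}, supplemented by Burns \cite{c7-Bu17} in the non-abelian case — which identifies the leading coefficients of $L_{p,S}(s,\psi)$ and $L_S(s,\rho)$ at $s=0$ up to an explicit $p$-adic regulator $R_p(\psi)$; since $R_p(\psi)$ is the determinant of an $r_S(\rho)\times r_S(\rho)$ matrix of $p$-adic logarithms of $S$-units, it is the empty determinant $1$ when $r_S(\rho)=0$, and its nonvanishing forces $\mathrm{ord}_{s=0} L_{p,S}(s,\psi) = 0$, so that the leading-coefficient comparison collapses to $L_{p,S}(0,\psi) = \iota(L_S(0,\rho))$. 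In either case \eqref{c7-eqn:int-at-0} follows.

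The main obstacle is that the order-of-vanishing statement invoked in the first case is not available in full generality: the theorems of Spiess and Burns require hypotheses on $\psi$ (for instance on the behaviour of the $p$-adic places, or the equivariant main conjecture of Ritter--Weiss and Kakde as an input), so this strategy proves Conjecture \ref{c7-conj:int-property} only in the corresponding cases, and the genuinely non-abelian situation rests on Burns's machinery relating $L_{p,S}(s,\psi)$ at $s=0$ to the equivariant Tamagawa number conjecture. A secondary technical point is that one must keep careful track of the compatibility of the $S$-truncations on the two sides and of the factor $H_{\psi}$ — precisely the mismatch that defeats the naive Brauer argument — in order to be certain that the regulator appearing in \cite{c7-DKV17} is literally trivial, rather than merely a $p$-adic unit, when $r_S(\rho)=0$.
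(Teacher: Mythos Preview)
This statement is Conjecture~\ref{c7-conj:int-property}, and the paper does \emph{not} prove it: it is presented as open (a special case of Gross's conjecture), known only for monomial $\psi$ via the Remark immediately following it. So there is no proof in the paper to compare against.

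Your strategy does reduce the conjecture to Gross's Conjectures~\ref{c7-conj:Gross-I} and~\ref{c7-conj:Gross-II}, but you have the two cases backwards. The case $r_S(\rho)\geq 1$ is in fact \emph{unconditional}: the Theorem following Conjecture~\ref{c7-conj:Gross-I} (Spiess for linear $\psi$, Burns in general) gives the lower bound $\mathrm{ord}_{s=0} L_{p,S}(s,\psi)\geq r_S(\psi)$ with no hypotheses on $\psi$, so both sides of \eqref{c7-eqn:int-at-0} vanish and you are done. The genuine gap is in the case $r_S(\rho)=0$. There you need Conjecture~\ref{c7-conj:Gross-II} for the given $\psi$, and this is not established for non-linear $\psi$: Theorem~\ref{c7-thm:Gross-linear} (Dasgupta--Kakde--Ventullo) covers only linear characters, and the extension to arbitrary $\psi$ in the Corollary after it requires Conjecture~\ref{c7-conj:Gross-I} for \emph{all} characters --- in particular for each linear $\chi_i$ in a Brauer decomposition of $\psi$, and those $\chi_i$ may well have $r_S(\chi_i)\geq 2$, where even the linear case of Conjecture~\ref{c7-conj:Gross-I} is open. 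Your observation that $R_{p,S}^{(\iota)}(\psi)=1$ when $r_S(\psi)=0$ (hence, by Proposition~\ref{c7-prop:vanishing-re}, that Conjecture~\ref{c7-conj:Gross-I} holds for $\psi$ itself) does not close this gap, since the Brauer step needs the exact order of vanishing for each $\chi_i$ separately. A minor correction: the Remark after Conjecture~\ref{c7-conj:int-property} asserts that both sides of \eqref{c7-eqn:int-at-0} are well-behaved under induction, so the obstruction to the naive Brauer argument is not the factor $H_\psi$ but rather the possible zeros of the individual $L_{p,S}(s,\chi_i)$ entering with negative exponent $n_i$.
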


\begin{remark}
	As both sides in \eqref{c7-eqn:int-at-0}
	are well-behaved with respect to direct sum, inflation and induction of characters, it is easy to see that
	Conjecture \ref{c7-conj:int-property} holds
	when $\psi$ is a monomial character \index{character!monomial}
	(also see the discussion in \cite[\S 2]{c7-Gr81}).  \exend
\end{remark}

In fact, Conjecture \ref{c7-conj:int-property} is a special case
of a Conjecture of Gross \cite{c7-Gr81} which we now recall.
Let $\chi \in \Irr_{\CC}(\Gal(\mathcal L(\zeta_p)/K))$ 
be a non-trivial character and let $r_{S}(\chi)$ be 
the order of vanishing of the $S$-truncated Artin
$L$-function $L_S(s,\chi)$ at $s=0$. We write
$L_S^{\ast}(0, \chi)$ for the leading coefficient in the Laurent series
expansion of $L_S(s,\chi)$ at $s=0$.
Now let $\psi \in \Irr_{\CC_{p}}(\mathcal{G})$ and choose a field
isomorphism $\iota: \CC \simeq \CC_p$. Then, formula \eqref{c7-eqn:order-of-vanishing}
shows that 
\[
r_S(\psi)\, :=\, r_S((\psi\omega^{-1})^{\iota})
\]
does in fact not depend on the choice of $\iota$.
The first part of Gross' conjecture \cite{c7-Gr81} concerns
the order of vanishing of $p$-adic Artin $L$-functions
and asserts the following.

\begin{conjecture} \label{c7-conj:Gross-I}
	\index{Gross' conjecture!first part}
	Let\/ $\psi \in \Irr_{\CC_{p}}(\mathcal{G})$. Then, the order of
	vanishing of the\/ $S$-truncated\/ $p$-adic Artin\/ $L$-function\/
	$L_{p,S}(s, \psi)$ at\/ $s=0$ equals\/ $r_S(\psi)$. 
\end{conjecture}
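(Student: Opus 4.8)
\textbf{Proof proposal for Conjecture \ref{c7-conj:Gross-I}.}

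The plan is to reduce the statement to a comparison between two quantities: the order of vanishing of the complex Artin $L$-function $L_S(s,(\psi\omega^{-1})^\iota)$ at $s=0$, which is $r_S(\psi)$ by definition and is computed by formula \eqref{c7-eqn:order-of-vanishing}, and the order of vanishing of the $p$-adic $L$-function $L_{p,S}(s,\psi)$ at $s=0$. The starting point is the factorisation $L_{p,S}(1-s,\psi) = G_{\psi,S}(u^s-1)/H_\psi(u^s-1)$ recalled in \S 4.2, so the order of vanishing of $L_{p,S}$ at $s=0$ is the order of vanishing of $G_{\psi,S}(u^{1-s}-1)$ at $s=0$ minus that of $H_\psi(u^{1-s}-1)$. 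The denominator contributes a trivial zero precisely when $H\subseteq\ker\psi$ and the corresponding character of $\Gamma$ is trivial, i.e.\ when $\psi$ factors through $\mathcal G/\Gamma$-type data with the right triviality; in general one isolates this elementary contribution and is left with understanding the numerator.

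First I would treat the linear case. For $\psi$ linear, the interpolation property \eqref{c7-eqn:interpolation-property} pins down $L_{p,S}(1-r,\psi)$ for all $r\geq 2$, and the Deligne--Ribet construction identifies $G_{\psi,S}$ with an Iwasawa power series whose specialisations recover the complex $L$-values; the order of vanishing at $s=0$ can then be read off from the $\mu$- and $\lambda$-invariant behaviour, which by the main conjecture of Iwasawa theory (Ritter--Weiss \cite{c7-RW11}, Kakde \cite{c7-Ka13}) is controlled by the characteristic ideal of the relevant Iwasawa module. The equality with $r_S(\psi)$ in the linear case then amounts to matching the analytically defined order with the algebraically defined one via \eqref{c7-eqn:order-of-vanishing}; here the places $v\in S_p$ play a distinguished role, since a $p$-adic place $w$ with $\psi$ trivial on the decomposition group forces a trivial zero of $L_{p,S}$ not visible at the level of the naive Euler product, and these must be accounted for exactly.

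Next I would pass from linear characters to general $\psi\in\Irr_{\CC_p}(\mathcal G)$ by Brauer induction, exactly as in Greenberg's construction \cite{c7-Gr83} of $L_{p,S}(s,\psi)$: write $\psi$ as a $\ZZ$-linear combination of characters induced from linear characters of subgroups, use that both the order of vanishing of $L_{p,S}(s,-)$ and the quantity $r_S(-)$ are additive and behave compatibly under induction and inflation (additivity of $r_S$ follows from \eqref{c7-eqn:order-of-vanishing}; inductivity of the $p$-adic order follows from inductivity of the $L$-function itself). The subtlety is that Brauer induction involves \emph{virtual} characters with possibly negative multiplicities, so an a priori equality of leading orders does not follow formally from the linear case — one genuinely needs a lower bound for the $p$-adic order of vanishing matching the known complex order, not just an inequality in one direction.

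The main obstacle is precisely establishing this lower bound for $\mathrm{ord}_{s=0}L_{p,S}(s,\psi)$, equivalently ruling out extra zeros of $G_{\psi,S}$ beyond those predicted. This is exactly the content of the recent work of Spiess \cite{c7-Sp14} and Burns \cite{c7-Bu17}: Spiess proves the conjectured order of vanishing is at most one bigger than expected in many cases and obtains the exact order under an appropriate non-vanishing hypothesis, while Burns deduces the full statement from the ETNC (or from the non-vanishing of a suitable Iwasawa-theoretic height pairing / the non-degeneracy of the relevant regulator). So the honest route is: reduce to $K_\infty$-level Iwasawa modules via the main conjecture, express the order of vanishing through the Fitting/characteristic ideal, and then invoke the non-vanishing results of Spiess or Burns to get the matching lower bound. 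I would expect the write-up to be essentially a citation of \cite{c7-Sp14} and \cite{c7-Bu17} for the hard inequality, with the reduction steps above supplied in outline.
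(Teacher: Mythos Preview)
The statement you are trying to prove is a \emph{conjecture}, not a theorem: the paper offers no proof, and indeed none is currently known. What the paper does record, in the theorem immediately following the conjecture, is only the inequality $\mathrm{ord}_{s=0}L_{p,S}(s,\psi)\geq r_S(\psi)$, due to Spiess \cite{c7-Sp14} for linear characters and Burns \cite{c7-Bu17} in general. The reverse inequality --- that the $p$-adic $L$-function has no \emph{extra} zeros at $s=0$ --- is open. Your proposal conflates the two directions: you write that the obstacle is ``establishing this lower bound \ldots\ equivalently ruling out extra zeros'', but a lower bound on the order of vanishing is the opposite of ruling out extra zeros. Spiess and Burns give the lower bound; the upper bound is what remains, and by Proposition~\ref{c7-prop:vanishing-re} it is equivalent to the non-vanishing of the $p$-adic regulator $R_{p,S}^{(\iota)}(\psi)$, a Leopoldt-type statement which is not known.

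Two further points. First, your ``linear case'' is not settled either: the paper's remark after the conjecture notes only that the linear case is known when $r_S(\psi)\in\{0,1\}$, and your appeal to the EIMC does not help, since the main conjecture controls characteristic ideals, not zeros of $L_{p,S}$ at $s=0$ directly (the translation goes via the regulator, which brings you back to the open non-vanishing). Second, even granting the linear case, your Brauer-induction step cannot work as written: you correctly observe that virtual characters with negative multiplicities prevent an equality of orders from propagating formally, but then you do not supply any mechanism to overcome this --- citing \cite{c7-Sp14,c7-Bu17} again only yields the $\geq$ direction. In short, there is no proof here to compare with, and your proposal does not close the gap.
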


\begin{remark}
	Suppose that $\psi$ is linear. If $r_S(\psi)$ vanishes, 
	Conjec\-ture \ref{c7-conj:Gross-I} holds by \eqref{c7-eqn:interpolation-property}. 
	The conjecture is also known when $r_S(\psi) = 1$ (see
	\cite[Prop. 2.13]{c7-Gr81}). \exend
\end{remark}

\begin{theorem}
	\index{order of vanishing}
	Let\/ $\psi \in \Irr_{\CC_{p}}(\mathcal{G})$. Then, the order of
	vanishing of the $S$-truncated\/ $p$-adic Artin\/ $L$-function\/
	$L_{p,S}(s, \psi)$ at\/ $s=0$ is at least\/ $r_S(\psi)$. 
\end{theorem}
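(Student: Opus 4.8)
The plan is to reduce the lower bound for the order of vanishing of $L_{p,S}(s,\psi)$ at $s=0$ to an Iwasawa-theoretic statement about the relevant Iwasawa module, and then to invoke the main conjecture of equivariant Iwasawa theory. First I would recall that, by the construction via Deligne--Ribet power series and Brauer induction recalled in the previous section, the function $L_{p,S}(1-s,\psi)$ is, up to the elementary factor $H_\psi(u^s-1)$, given by evaluating a power series $G_{\psi,S}(T)$ at $T=u^s-1$. Thus the order of vanishing of $L_{p,S}(s,\psi)$ at $s=0$ is governed by the order of vanishing of $G_{\psi,S}(T)$ at the point $T=u-1$ (corresponding to $s=1$, i.e.\ after the reflection $s\mapsto 1-s$ the point $s=0$), corrected by the order of the zero of $H_\psi$ there; the latter contributes exactly $\dim_{\CC_p} V_\psi^H$ when $H\subseteq\ker\psi$ fails to hold, and is handled by a direct computation. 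So the problem becomes: bound below the order of the zero of the Iwasawa power series.

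Next I would pass to the Iwasawa module side. For each finite place $v\in S$ one has a local Euler-factor contribution, and the global Iwasawa module (the Pontryagin dual of a suitable ray class group or the relevant cohomology group $H^2$ of $\ZZ_p(1)$ over $\mathcal{L}_\infty$, twisted appropriately) fits into an exact sequence whose Euler characteristic computes $G_{\psi,S}$. The point is that the $\psi$-component of the characteristic ideal picks up a zero at $s=0$ of order at least $\sum_{v\in S}\dim_{\CC_p}V_\psi^{\mathcal{G}_w} - \dim_{\CC_p}V_\psi^{\mathcal{G}}$, which by \eqref{c7-eqn:order-of-vanishing} is exactly $r_S(\psi)$: each place $v\in S$ where $V_\psi^{\mathcal{G}_w}\neq 0$ forces the local term $1-N(v)^{-s}\phi_w$ (or rather its $p$-adic avatar) to degenerate, contributing a zero, while the trivial-isotypic part over all of $\mathcal{G}$ accounts for the $-\dim V_\psi^{\mathcal{G}}$ correction coming from the pole/zero of the Iwasawa main conjecture denominator. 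Concretely, one uses the main conjecture (Ritter--Weiss \cite{c7-RW11}, Kakde \cite{c7-Ka13}) to identify $G_{\psi,S}$ with the characteristic power series of the relevant $\Lambda$-module, and then bounds the order of vanishing of that characteristic power series by the $\ZZ_p$-corank of the cokernel of the relevant descent map — equivalently, by counting the local trivial-isotypic subspaces. This corank is computed by a standard local-global argument (inflation-restriction, local Euler characteristics, Poitou--Tate).

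The main obstacle I expect is controlling the interaction between the non-abelian structure and the twist by $\psi$: because $\mathcal{G}\simeq H\rtimes\Gamma$ need not be abelian, $G_{\psi,S}$ is obtained only after Brauer induction, and one must check that the order-of-vanishing estimate is compatible with induction and inflation of characters (so that it suffices to treat the case where $\psi$ is linear on an open subgroup, where the abelian Deligne--Ribet theory applies directly). The estimate $\mathrm{ord}_{s=0}L_{p,S}(s,\psi^{\mathrm{ind}})=\sum_i \mathrm{ord}_{s=0}L_{p,S}(s,\psi_i)$ is additive in the right way, so the inequality descends, but verifying that the local $\dim V_\psi^{\mathcal{G}_w}$ terms are respected under induction requires Shapiro's lemma together with a careful bookkeeping of decomposition groups in $\mathcal{L}/K$. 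The remaining steps — the local Euler characteristic computation and the invocation of the main conjecture to replace $G_{\psi,S}$ by a characteristic ideal — are by now routine, so I would present those briefly and concentrate the writing on the reduction to the abelian case and the order-of-vanishing count for the abelian Iwasawa power series.
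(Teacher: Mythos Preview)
There is a genuine gap, in fact two.

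First, your main engine is the equivariant Iwasawa main conjecture of Ritter--Weiss and Kakde. But as the paper itself notes (see the remark following Theorem~\ref{c7-thm:Brumer-Stark}), the EIMC is only known under the hypothesis that the relevant Iwasawa $\mu$-invariant vanishes (or under other special hypotheses such as $p\nmid |G|$). The theorem you are asked to prove is \emph{unconditional}; invoking the EIMC therefore yields at best a conditional version, which is not what is claimed.

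Second, your reduction to the linear case via Brauer induction does not work for a one-sided inequality. Brauer's theorem writes $\psi = \sum_i n_i\,\mathrm{Ind}\,\chi_i$ with $\chi_i$ linear and $n_i\in\ZZ$, and some $n_i$ are negative. One has
\[
\mathrm{ord}_{s=0}\,L_{p,S}(s,\psi) \;=\; \sum_i n_i\,\mathrm{ord}_{s=0}\,L_{p,S}(s,\chi_i),
\qquad
r_S(\psi) \;=\; \sum_i n_i\, r_S(\chi_i),
\]
so from $\mathrm{ord}_{s=0}\,L_{p,S}(s,\chi_i)\geq r_S(\chi_i)$ for every $i$ you \emph{cannot} conclude the desired inequality for $\psi$ once negative $n_i$ occur. (Compare the corollary following Theorem~\ref{c7-thm:Gross-linear}: Brauer induction is used there only after one has the full \emph{equality} for linear characters.) Your phrase ``the inequality descends'' is exactly the step that fails.

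For comparison, the paper's argument is by citation: the linear case is due to Spie\ss\ \cite{c7-Sp14}, who proves it via Shintani cocycles (an unconditional, essentially cohomological construction that does not touch $\mu=0$), and the general case is due to Burns \cite{c7-Bu17}. Burns' method does not proceed by naive Brauer induction on the inequality; rather (cf.\ Proposition~\ref{c7-prop:vanishing-re}) it relates the order of vanishing directly to the $p$-adic regulator map $\lambda_{p,L,S}$ via a descent argument on the appropriate complex, so that the lower bound follows from a rank count for the kernel/cokernel of $\lambda_{p,L,S}$ without needing the full main conjecture.
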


\begin{proof}
	For $\psi$ being a linear character, this has been proved by Spiess \cite{c7-Sp14}
	using Shintani cocycles (his approach actually allows $p$
	to be equal to $2$). The general case has recently been settled by 
	Burns \cite[Thm.~3.1]{c7-Bu17}. 
\end{proof}

The rest of this subsection is mainly devoted to (the second part of)
Gross' conjecture. This may be skipped by the reader who is only
interested in the Brumer and Brumer--Stark conjectures.

Fix a character $\psi \in \Irr_{\CC_{p}}(\mathcal{G})$ and choose
a Galois CM-extension $L$ over $K$ such that $\psi \omega^{-1}$ factors
through $G := \Gal(L/K)$. 
We denote the kernel of the natural augmentation map $\ZZ[S(L)] 
\rightarrow \ZZ$ that maps each $w \in S(L)$ to $1$ by $X_{L,S}$.
The usual Dirichlet map \index{Dirichlet map}
\begin{eqnarray*}
	\lambda_{L,S}:\, \RR \otimes E_{L,S} & \rightarrow & \RR \otimes X_{L,S}\\
	1 \otimes \epsilon & \mapsto & \sum_{w \in S(L)} \log  |\epsilon |_w w
\end{eqnarray*}
is an isomorphism of $\RR[G]$-modules. For each place $w$ of $L$,
Gross \cite[\S 1]{c7-Gr81} defines a $p$-adic absolute value
\[
\lVert \cdot \rVert_{w,p}:\, L^{\times} \rightarrow \ZZ_p^{\times}
\]
as the composite map
\[
L^{\times} \hookrightarrow L_w^{\times} \rightarrow 
\Gal(L_w^{\mathrm{ab}} / L_w) \rightarrow
\ZZ_p^{\times};
\]
here, $L_w^{\mathrm{ab}}$ denotes the maximal abelian extension of $L_w$,
the first arrow is the natural inclusion, the second arrow is the 
reciprocity map of local class field theory and the last map
is the $p$-adic cyclotomic character.
We define a homomorphism of $\ZZ_p[G]$-modules
\begin{eqnarray*}
	\lambda^{}_{p,L,S}:\, \ZZ^{}_p \otimes E^{}_{L,S} & \rightarrow & 
	\ZZ^{}_p \otimes X^{}_{L,S}\\
	1 \otimes \epsilon & \mapsto & 
	\sum_{w \in S(L)} \log  \lVert \epsilon \rVert^{}_{w,p} w.
\end{eqnarray*}
Now, choose a field isomorphism $\iota: \CC \simeq \CC^{}_p$. Then, 
$\lambda_{L,S}^{-1}$ and $\lambda^{}_{p,L,S}$ induce an endomorphism
\[
(\CC^{}_p \otimes^{}_{\ZZ^{}_p} \lambda^{}_{p,L,S}) \circ 
(\CC^{}_p \otimes^{}_{\iota} \lambda_{L,S}^{-1}):\, \CC^{}_p \otimes X^{}_{L,S}
\rightarrow \CC^{}_p \otimes X^{}_{L,S}.
\]
We define a $p$-adic regulator \index{regulator}
\[
R_{p, S}^{(\iota)}(\psi)\, :=\, \mathrm{det}^{}_{\CC_p}((\CC^{}_p \otimes^{}_{\ZZ^{}_p} 
\lambda^{}_{p,L,S}) \circ 
(\CC^{}_p \otimes^{}_{\iota} \lambda_{L,S}^{-1}) \mid
\Hom^{}_{\CC^{}_p[G]}(V^{}_{\psi \omega^{-1}}, \CC^{}_p \otimes X^{}_{L,S})).
\]

\begin{proposition} \label{c7-prop:vanishing-re}
	Fix\/ $\psi \in \Irr_{\CC_{p}}(\mathcal{G})$ and choose a field isomorphism\/ 
	$\iota: \CC \simeq \CC_p$. Then Conjecture~\textnormal{\ref{c7-conj:Gross-I}} 
	holds for\/ $\psi$ if and only if\/ $R_{p, S}^{(\iota)}(\psi) \neq0$.
\end{proposition}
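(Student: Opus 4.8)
The plan is to combine the lower bound $\mathrm{ord}_{s=0}L_{p,S}(s,\psi)\ge r_S(\psi)$ from the preceding theorem with an exact computation of this order of vanishing in terms of the $p$-adic regulator map $\lambda_{p,L,S}$. Granting the lower bound, Conjecture~\ref{c7-conj:Gross-I} for $\psi$ amounts to the non-vanishing of the Taylor coefficient of $L_{p,S}(s,\psi)$ at $s=0$ in degree $r_S(\psi)$, so it suffices to prove that this coefficient is non-zero precisely when $R_{p,S}^{(\iota)}(\psi)\neq0$.

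First I would reformulate the regulator condition. Since $\lambda_{L,S}$ is an isomorphism of $\RR[G]$-modules, the map $\CC_p\otimes_\iota\lambda_{L,S}^{-1}$ identifies $\Hom_{\CC_p[G]}(V_{\psi\omega^{-1}},\CC_p\otimes X_{L,S})$ with $\Hom_{\CC_p[G]}(V_{\psi\omega^{-1}},\CC_p\otimes E_{L,S})$, and by~\eqref{c7-eqn:order-of-vanishing} both spaces have dimension $r_S(\psi)$. Hence $R_{p,S}^{(\iota)}(\psi)\neq0$ if and only if the induced map $\Hom_{\CC_p[G]}(V_{\psi\omega^{-1}},\CC_p\otimes\lambda_{p,L,S})$ is an isomorphism, equivalently --- by equality of dimensions --- injective, equivalently surjective; in particular this condition does not depend on the auxiliary field $L$.

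The heart of the matter is the identity
\[
\mathrm{ord}_{s=0}L_{p,S}(s,\psi)-r_S(\psi)\,=\,\dim_{\CC_p}\Hom_{\CC_p[G]}\!\bigl(V_{\psi\omega^{-1}},\,\CC_p\otimes\mathrm{coker}(\lambda_{p,L,S})\bigr),
\]
which, together with the previous paragraph, yields the proposition. To establish it I would pass from the analytic object $G_{\psi,S}(T)/H_\psi(T)$ to an algebraic one. First, $H_\psi(u-1)\neq0$, since $u=\kappa(\gamma_K)\in 1+p\ZZ_p$ has infinite order while $\psi(\gamma_K)$ is a root of unity; hence $\mathrm{ord}_{s=0}L_{p,S}(s,\psi)=\mathrm{ord}_{T=u-1}G_{\psi,S}(T)$. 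Next, by the equivariant Iwasawa main conjecture for the totally real field $K$ (Ritter--Weiss \cite{c7-RW11}, Kakde \cite{c7-Ka13}) --- or, as in Gross's original treatment \cite{c7-Gr81}, by a direct Iwasawa descent using only the weak Leopoldt conjecture --- one identifies $G_{\psi,S}$, up to a unit power series and up to $\mu$-invariants, which do not affect orders of vanishing, with the $\psi$-component of a characteristic series of a perfect complex of $\ZZ_p[[\mathcal{G}]]$-modules computing the étale cohomology of $\ZZ_p(1)$ on $\mathrm{Spec}\,\mathcal{O}_{L_\infty,S}$, where $L_\infty:=LK_\infty$. The point $T=u-1$ is exactly the one at which the $\Gamma$-specialisation of this complex recovers $R\Gamma\bigl(\mathcal{O}_{L,S},\ZZ_p(1)\bigr)$; descending and computing cohomology, the $H^1$ of the specialised complex is $\ZZ_p\otimes E_{L,S}$ up to finite groups, and the connecting homomorphism of the resulting exact sequence is, rationally and after the comparison isomorphism $\lambda_{L,S}$ with the complex side, the $p$-adic regulator $\lambda_{p,L,S}$ --- the $p$-adic cyclotomic character entering through the local terms at the places above $p$, which is precisely where the absolute values $\lVert\cdot\rVert_{w,p}$ originate. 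The determinant/Euler-characteristic formalism then gives the displayed identity.

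No separate argument is needed for non-abelian $\psi$: the quantities $L_{p,S}(s,\psi)$, $r_S(\psi)$ and $R_{p,S}^{(\iota)}(\psi)$ are all built from linear characters by Brauer induction and Artin formalism, and the argument above is run $G$-equivariantly after passing to $\psi\omega^{-1}$-isotypic components over $\CC_p[G]$. The main obstacle is the cohomological bookkeeping in the previous paragraph: one must carefully account for the finite ``trivial-zero'' contributions at the places above $p$ and verify that the connecting map is \emph{literally} the $p$-adic regulator, and not merely a map of the same rank, so that the order of vanishing itself --- not just a bound on it --- is pinned down exactly by the corank of $\lambda_{p,L,S}$ on the $\psi\omega^{-1}$-eigenspace.
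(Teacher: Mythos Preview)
The paper does not give an argument at all: its proof of Proposition~\ref{c7-prop:vanishing-re} is a one-line citation to \cite[Thm.~3.1~(iii)]{c7-Bu17}. Your proposal, by contrast, attempts to sketch the actual content of Burns' argument, and the overall shape you describe --- lower bound plus an exact formula for the excess order of vanishing in terms of the corank of $\lambda_{p,L,S}$ on the relevant isotypic component --- is indeed the right picture.

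That said, there is a genuine gap in your sketch. Your step~3 invokes the EIMC of \cite{c7-RW11,c7-Ka13} to identify $G_{\psi,S}$ with a characteristic element. The EIMC is \emph{not} known unconditionally: as the paper itself records later, the proofs require either $\mu_p(L(\zeta_p)^+)=0$ or restrictive hypotheses on $G$. Using it here would render the proposition conditional, which it is not. You do offer the alternative ``direct Iwasawa descent using only the weak Leopoldt conjecture'', and this is the correct route --- one does not need a main conjecture to compute orders of vanishing, only to compute leading terms --- but you have essentially deferred the entire difficulty to the phrase ``cohomological bookkeeping''. The nontrivial content of \cite[Thm.~3.1]{c7-Bu17} is precisely the construction of a suitable perfect complex whose semisimplified Euler characteristic at the descent point is governed by $\lambda_{p,L,S}$, together with the verification that the resulting map \emph{is} the Gross regulator and not merely some map of the same rank. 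Your final paragraph correctly identifies this as the main obstacle, but identifying an obstacle is not the same as overcoming it; as written, the proposal is a plausible outline rather than a proof.
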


\begin{proof}
	This follows from \cite[Thm.~3.1~(iii)]{c7-Bu17}.
\end{proof}

For an integer $r$, let $L_{p,S}^r(0, \psi)$ be the coefficient of $s^r$
in the power series expansion of $L_{p,S}(s, \psi)$ at $s=0$.
We can now state the second part of Gross' conjecture.

\begin{conjecture} \label{c7-conj:Gross-II}
	\index{Gross' conjecture!second part}
	Fix\/ $\psi \in \Irr^{}_{\CC^{}_{p}}(\mathcal{G})$ and choose a field 
	isomorphism\/ $\iota: \CC \simeq \CC_p$. Then, one has
	\[
	L_{p,S}^{r_S(\psi)}(0, \psi)\, =\, R_{p,S}^{(\iota)}(\psi) \cdot
	\iota\left(L_S^{\ast}(0,(\psi \omega^{-1})^{\iota})\right).
	\]
\end{conjecture}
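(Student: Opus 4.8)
The plan is to prove the identity first for linear characters $\psi$ --- where it should follow from the equivariant Iwasawa main conjecture together with an Eisenstein-congruence argument --- and then to descend to arbitrary $\psi$ by Brauer induction.

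\textbf{The linear case.} Suppose $\psi$ is linear, so that $\chi := \psi\omega^{-1}$ is a totally odd character of some abelian $G = \Gal(L/K)$, and set $r := r_S(\psi)$; by \eqref{c7-eqn:order-of-vanishing} this is the number of places of $S$ --- essentially the $p$-adic ones at which $\chi$ splits completely --- that force a zero of $L_S(s,\chi^\iota)$ at $s=0$. The theorem recorded above (Spiess \cite{c7-Sp14}, Burns \cite{c7-Bu17}) already shows that $L_{p,S}(s,\psi)$ vanishes to order at least $r$ at $s=0$, so the content is the leading coefficient $L_{p,S}^{r}(0,\psi)$. By the equivariant main conjecture for totally real fields (Ritter--Weiss \cite{c7-RW11}, Kakde \cite{c7-Ka13}) the Deligne--Ribet power series $G_{\psi,S}(T)$ is, up to the factor $H_\psi(T)$, a characteristic power series of the minus part of the Iwasawa class-group module for the cyclotomic $\ZZ_p$-extension of $L$. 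Following the Ribet-style method of Dasgupta--Kakde--Ventullo \cite{c7-DKV17} I would then construct a group-ring-valued Hilbert modular Eisenstein series whose constant terms are the relevant (modified) Stickelberger elements, together with a cuspidal Hecke eigenform congruent to it modulo an ``Eisenstein'' ideal whose specialisation encodes $G_{\psi,S}$; the Galois representation attached to the cusp form then carries a nontrivial extension of the trivial character by a cyclotomic twist of $\psi^{-1}$, and an explicit local reciprocity computation at the split $p$-adic places identifies the pertinent components of the resulting cohomology class with the entries of the matrix defining $R_{p,S}^{(\iota)}(\psi)$, the Gross regulator of $p$-adic logarithms of $S$-units. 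Matching the congruence against the main conjecture then pins down $L_{p,S}^{r}(0,\psi)$ as $R_{p,S}^{(\iota)}(\psi)\cdot\iota\big(L_S^{\ast}(0,\chi^\iota)\big)$; here the case $r=0$ reduces to Conjecture \ref{c7-conj:int-property}, and arranging enough independent cohomology classes to fill out the $r\times r$ regulator when $r\ge 1$ is the technical heart of this step.

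\textbf{Descent and the main obstacle.} Both sides of the asserted identity are additive in $\psi$ and compatible with inflation, so by Brauer's theorem it would suffice to show that they are compatible with induction of characters from open subgroups of $\mathcal{G}$. For the two $L$-functions this is their standard inductivity, which for $G_{\psi,S}(T)$ is in fact built into its construction. The difficulty --- and the reason Conjecture \ref{c7-conj:Gross-II} is not yet a theorem in general --- lies in the regulator factor $R_{p,S}^{(\iota)}(\psi)$: in contrast with the order of vanishing (Conjecture \ref{c7-conj:Gross-I}), which Burns \cite{c7-Bu17} controls for arbitrary $\psi$ by essentially homological means, this determinant does not visibly behave well under induction, because the $p$-adic regulator $\lambda_{p,L,S}$ and the archimedean regulator $\lambda_{L,S}$ are normalised against different lattices and reconciling them along induced characters is a delicate linear-algebra problem that, to my knowledge, has not been carried through. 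Finding the right functorial formulation of the regulator term so that Brauer induction goes through --- or, failing that, running the Eisenstein-congruence and Ritter--Weiss argument of the linear case directly over a non-abelian base --- is the step I expect to be genuinely hard.
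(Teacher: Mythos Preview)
The statement you are addressing is a \emph{conjecture}, and the paper does not claim to prove it; there is therefore no ``paper's own proof'' to compare against. What the paper does record is (i) Theorem~\ref{c7-thm:Gross-linear} (Dasgupta--Kakde--Ventullo): the conjecture holds for linear $\psi$; and (ii) the Corollary immediately following it: assuming Conjecture~\ref{c7-conj:Gross-I} for \emph{all} $\psi\in\Irr_{\CC_p}(\mathcal G)$, the general case follows from the linear case by Brauer induction. Your two-step plan (linear case, then Brauer descent) is exactly this structure, and your sketch of the linear step is a fair summary of the DKV strategy, so on that level you agree with the paper.

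Where your proposal diverges from the paper is in the diagnosis of the obstruction to the Brauer step. You assert that $R_{p,S}^{(\iota)}(\psi)$ ``does not visibly behave well under induction''. In fact it does: the endomorphism $(\CC_p\otimes\lambda_{p,L,S})\circ(\CC_p\otimes_\iota\lambda_{L,S}^{-1})$ of $\CC_p\otimes X_{L,S}$ is $\CC_p[G]$-equivariant and depends only on $L$ and $S(L)$, and Frobenius reciprocity identifies $\Hom_{\CC_p[G]}(\mathrm{Ind}_H^G V_{\lambda\omega^{-1}},\CC_p\otimes X_{L,S})$ with $\Hom_{\CC_p[H]}(V_{\lambda\omega^{-1}},\CC_p\otimes X_{L,S})$; the determinant on these two spaces is the same, so the regulator is inductive. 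Likewise, both $L$-series are inductive, and all three quantities are multiplicative in $\psi$ (for $L_{p,S}^{r_S(\psi)}(0,\psi)$ this uses the Spiess--Burns lower bound on the order of vanishing). The genuine obstruction is elsewhere: Brauer's theorem writes $\psi$ only as a \emph{virtual} sum $\sum n_i\,\mathrm{Ind}\,\lambda_i$ with $n_i\in\ZZ$, so passing from the induced constituents to $\psi$ requires cancelling factors of the form $R_{p,S}^{(\iota)}(\mathrm{Ind}\,\lambda_i)\cdot\iota(L_S^\ast(0,\cdot))$ on both sides. This cancellation is legitimate precisely when those factors are non-zero, and by Proposition~\ref{c7-prop:vanishing-re} non-vanishing of the regulator is \emph{equivalent} to Conjecture~\ref{c7-conj:Gross-I}. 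That is why the paper states the Corollary conditionally on Conjecture~\ref{c7-conj:Gross-I}, and why the remaining open problem is the order-of-vanishing statement, not any failure of functoriality of the regulator.
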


\begin{remark}
	By means of Proposition \ref{c7-prop:vanishing-re}, it is clear that
	Conjecture \ref{c7-conj:Gross-II} is only interesting when
	Conjecture \ref{c7-conj:Gross-I} holds.  \exend
\end{remark}

The following recent result due to 
Dasgupta, Kakde and Ventullo \cite{c7-DKV17} generalises the approach
developed in \cite{c7-DDP11}.

\begin{theorem} \label{c7-thm:Gross-linear}
	Conjecture~\textnormal{\ref{c7-conj:Gross-II}} holds for linear characters. \qed
\end{theorem}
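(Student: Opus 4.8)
The plan is to adapt to the linear case---and to push to arbitrary order of vanishing---the strategy of Dasgupta, Darmon and Pollack \cite{c7-DDP11}, which rests on Ribet's method of Eisenstein congruences for Hilbert modular forms. I begin with the standard reductions. Since $\psi$ is linear it factors through $\mathcal G^{\mathrm{ab}}$, and as both sides of the asserted identity are compatible with inflation we may assume that $\mathcal G$ is abelian, so that $\mathcal L/K$ is abelian over the totally real field $K$. Fix a CM-extension $L/K$ through which $\chi := (\psi\omega^{-1})^{\iota}$ factors, put $G := \Gal(L/K)$ and $r := r_S(\psi)$. If $R_{p,S}^{(\iota)}(\psi)=0$, then by Proposition \ref{c7-prop:vanishing-re} Conjecture \ref{c7-conj:Gross-I} fails for $\psi$, so by the known lower bound on the order of vanishing (Spiess \cite{c7-Sp14} for linear characters) $L_{p,S}(s,\psi)$ vanishes at $s=0$ to order strictly greater than $r$; hence $L_{p,S}^{r}(0,\psi)=0$ and the identity holds trivially. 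We may therefore assume $R_{p,S}^{(\iota)}(\psi)\neq 0$, i.e.\ that Conjecture \ref{c7-conj:Gross-I} holds for $\psi$ and $L_{p,S}(s,\psi)$ vanishes to order exactly $r$. Enlarging $S$ multiplies both sides by the same nonzero Euler factor (compare \eqref{c7-eqn:interpolation-property}), so $S$ may be taken as large as convenient. By \eqref{c7-eqn:order-of-vanishing}, $r$ is the number of finite places $v\in S$ that split completely in the fixed field of $\chi$, and the $p$-adic ones among these account both for the trivial zeros of $L_{p,S}(s,\psi)$ and for the nonvanishing of $R_{p,S}^{(\iota)}(\psi)$. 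When $r=0$ the regulator is an empty determinant and the statement reduces to Conjecture \ref{c7-conj:int-property}, which for monomial---in particular linear---characters is already known (see the remark following Conjecture \ref{c7-conj:int-property}); so the real content is the case $r\geq 1$.

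The heart of the argument is a rank-$r$ generalisation of Ribet's method. One constructs a $\Lambda$-adic, group-ring-valued family of Hilbert modular Eisenstein series over $K$ whose constant terms at the relevant cusps are, up to units, the $S$-truncated $p$-adic $L$-functions $L_{p,S}(s,\cdot)$ evaluated along the family; the vanishing of $L_{p,S}(s,\psi)$ to order exactly $r$ at $s=0$ forces these constant terms into a suitable power of an Eisenstein ideal. Hida theory then yields a cuspidal family congruent to the Eisenstein family modulo that ideal, and hence a residually reducible big Galois representation. Passing to a carefully chosen lattice produces cohomology classes in $H^{1}$ of a $p$-adic Iwasawa module built from ray class groups and $S$-units of $L$; this is the point at which the Iwasawa main conjecture (by a theorem of Wiles for totally real $K$; see also \cite{c7-RW11} and \cite{c7-Ka13} for the equivariant formulation) enters, identifying the characteristic ideal of the relevant module with the $p$-adic $L$-function. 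Because the cuspidal family is ordinary, the localisations of these classes at the places above $p$ are ``almost unramified'', and their ramified parts are governed by the $p$-adic logarithms $\log\lVert u\rVert_{w,p}$ of the Gross--Stark units (Rubin--Stark elements) attached to $\chi$. Assembling the $r$ classes coming from the $r$ split primes in $S$ and taking the determinant of the resulting matrix of $p$-adic logarithms against the Dirichlet map $\lambda_{L,S}$ recovers exactly $R_{p,S}^{(\iota)}(\psi)$; the complex leading term enters through the archimedean normalisation of the Eisenstein constant terms, the archimedean regulator implicit in $L_S^{\ast}(0,\chi)$ cancelling against the one built into $R_{p,S}^{(\iota)}(\psi)$. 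Equating the two computations of the constant terms then gives the identity of Conjecture \ref{c7-conj:Gross-II} for $\psi$.

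The two chief obstacles are exactly the points where \cite{c7-DKV17} goes beyond \cite{c7-DDP11}. First, \cite{c7-DDP11} handles only $r=1$, where a single Gross--Stark unit and a single extension class suffice; the rank-$r$ version requires producing $r$ independent extension classes (one per split prime), controlling the constant terms of the Eisenstein family at all relevant cusps simultaneously, and realising the regulator as a determinant by a linear-algebra argument. Second, and more seriously, one must know that the Eisenstein congruence is \emph{sharp}---that the cuspidal Hecke algebra is no larger than predicted by the $r$-th ``power'' of the $p$-adic $L$-function---so that the extracted classes detect precisely the leading term $L_{p,S}^{r}(0,\psi)$ rather than a proper divisor of it. In \cite{c7-DDP11} this forced an auxiliary hypothesis; its removal is the technical core of \cite{c7-DKV17} and is obtained from an independent upper bound on the cuspidal quotient of the Hecke algebra, itself a consequence of a second application of the Iwasawa main conjecture (equivalently, of the fact that the relevant Eisenstein ideal is principal, generated by the $p$-adic $L$-value). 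Once this sharpness statement is available, the determinant computation of the previous paragraph is forced to be an equality rather than a mere divisibility, and Conjecture \ref{c7-conj:Gross-II} for linear $\psi$ follows.
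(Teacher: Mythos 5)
The paper records this statement without proof: it is presented as an external theorem of Dasgupta, Kakde and Ventullo \cite{c7-DKV17}, and the surrounding text only says that their work generalises the approach of \cite{c7-DDP11}. So the only meaningful question is whether your sketch would stand as a proof, or at least faithfully summarises the cited one. Your opening reductions are correct and standard: inflation to the abelian quotient, disposing of the case $R_{p,S}^{(\iota)}(\psi)=0$ via Proposition \ref{c7-prop:vanishing-re} together with the lower bound on the order of vanishing from \cite{c7-Sp14}, and reducing $r_S(\psi)=0$ to Conjecture \ref{c7-conj:int-property}, which is known for linear characters. But from that point on the argument is a programme rather than a proof: the construction of the group-ring-valued $\Lambda$-adic Eisenstein family, the Hida-theoretic production of a congruent cuspidal family, the lattice construction yielding the relevant cohomology classes, the input of the main conjecture, and above all the ``sharpness'' of the Eisenstein congruence are each asserted in a single sentence. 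These assertions are precisely the content of \cite{c7-DDP11} and \cite{c7-DKV17}; naming them does not establish them, so as a self-contained argument the proposal has a gap at essentially every substantive step.

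One concrete architectural inaccuracy is worth flagging even if you intend the sketch only as a summary of \cite{c7-DKV17}. That paper does not run a ``rank-$r$ Ribet method'' producing $r$ extension classes, one per split prime, and then compute an $r\times r$ determinant against the congruence. Its modular-forms core is carried out entirely in the rank-one setting (a single prime above $p$ splitting completely in the kernel field), the new ingredients relative to \cite{c7-DDP11} being the removal of the auxiliary hypotheses there; the statement of Conjecture \ref{c7-conj:Gross-II} for arbitrary $r_S(\psi)$ is then deduced from the rank-one case by a formal cohomological argument (going back to Gross and to Spiess) combined with the known lower bound on the order of vanishing. The genuinely higher-rank incarnation of Ribet's method that you describe belongs to the later Dasgupta--Kakde work on the Brumer--Stark conjecture, not to the proof of this theorem. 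So the second and third paragraphs of your proposal describe a strategy that is neither carried out nor, in its higher-rank form, the one actually used in the cited proof.
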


\begin{corollary}
	Suppose that Conjecture~\textnormal{\ref{c7-conj:Gross-I}} holds for all\/
	$\psi \in \Irr_{\CC_p}(\mathcal{G})$. Then, Conjecture~\textnormal{\ref{c7-conj:Gross-II}} is also true for all\/ $\psi \in \Irr_{\CC_p}(\mathcal{G})$.
\end{corollary}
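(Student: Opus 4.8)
The plan is to interpret both sides of the identity in Conjecture~\ref{c7-conj:Gross-II} as values of two group homomorphisms on the ring $R_{\CC_p}(\mathcal{G})$ of virtual $\CC_p$-characters of $\mathcal{G}$, to check that these homomorphisms are inductive, and then to reduce to linear characters via Brauer's induction theorem, whereupon Theorem~\ref{c7-thm:Gross-linear} completes the argument.

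First I would extend all the relevant quantities to virtual characters $\beta$ of $\mathcal{G}$: the $p$-adic $L$-function $L_{p,S}(s,\beta)$ is defined multiplicatively, formula~\eqref{c7-eqn:order-of-vanishing} extends $r_S$ additively, and both $L_S^{\ast}(0,(\beta\omega^{-1})^{\iota})$ and the determinant $R_{p,S}^{(\iota)}(\beta)$ extend multiplicatively (for the latter, the $\Hom$-space defining it splits as a block-diagonal direct sum). Granting Conjecture~\ref{c7-conj:Gross-I} for every $\psi \in \Irr_{\CC_p}(\mathcal{G})$ --- hence, by additivity of orders of vanishing on both the analytic and the character-theoretic side, for every virtual character --- the coefficient $L_{p,S}^{r_S(\beta)}(0,\beta)$ is exactly the leading Laurent coefficient of $L_{p,S}(s,\beta)$ at $s=0$; it is therefore nonzero, and since leading coefficients of products multiply while orders of vanishing add, the assignment $A\colon\beta\mapsto L_{p,S}^{r_S(\beta)}(0,\beta)$ is a homomorphism $R_{\CC_p}(\mathcal{G})\to\CC_p^{\times}$. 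Similarly $B\colon\beta\mapsto R_{p,S}^{(\iota)}(\beta)\cdot\iota\!\left(L_S^{\ast}(0,(\beta\omega^{-1})^{\iota})\right)$ takes values in $\CC_p^{\times}$ --- the regulator is nonzero by Proposition~\ref{c7-prop:vanishing-re} and the complex order of vanishing is governed by~\eqref{c7-eqn:order-of-vanishing} --- and is a homomorphism. Conjecture~\ref{c7-conj:Gross-II} for $\psi$ is exactly the equality $A(\psi)=B(\psi)$.

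Next I would establish the inductive behaviour of $A$ and $B$. Let $\mathcal{H}\leq\mathcal{G}$ with fixed field $F:=\mathcal{L}^{\mathcal{H}}$; then $F$ is totally real and $\mathcal{L}/F$ again satisfies the hypotheses imposed on $\mathcal{L}/K$ (it contains the cyclotomic $\ZZ_p$-extension of $F$, and $\mathcal{H}=\Gal(\mathcal{L}/F)$), so one has analogues $A_F$, $B_F$ over $F$, with $S$ replaced by the set $S_F$ of places of $F$ above $S$. For a character $\chi$ of $\mathcal{H}$ one has $L_{p,S}(s,\mathrm{Ind}_{\mathcal{H}}^{\mathcal{G}}\chi)=L_{p,S_F}(s,\chi)$ (the $p$-adic $L$-function is built by Brauer induction and is inductive by construction), $L_S(s,\mathrm{Ind}_{\mathcal{H}}^{\mathcal{G}}\chi)=L_{S_F}(s,\chi)$ (Artin formalism), and $(\mathrm{Ind}_{\mathcal{H}}^{\mathcal{G}}\chi)\otimes\omega^{-1}=\mathrm{Ind}_{\mathcal{H}}^{\mathcal{G}}(\chi\otimes\omega_F^{-1})$ by the projection formula, where $\omega_F$ is the Teichm\"uller character of $F$, i.e.~the restriction of $\omega$. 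Together with Frobenius reciprocity --- which identifies the $\Hom$-space computing $R_{p,S}^{(\iota)}(\mathrm{Ind}_{\mathcal{H}}^{\mathcal{G}}\chi)$ with that computing $R_{p,S_F}^{(\iota)}(\chi)$ --- and the fact that the Dirichlet map $\lambda_{L,S}$ and its $p$-adic analogue $\lambda_{p,L,S}$ are given by the same recipe over $F$ as over $K$, this yields $A(\mathrm{Ind}_{\mathcal{H}}^{\mathcal{G}}\chi)=A_F(\chi)$ and $B(\mathrm{Ind}_{\mathcal{H}}^{\mathcal{G}}\chi)=B_F(\chi)$. One also checks, as in the proof of Lemma~\ref{c7-lem:dependence-S}, that $A$ and $B$ are unchanged under enlarging $S$, so the passage from $S$ to $S_F$ is harmless.

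Finally, Theorem~\ref{c7-thm:Gross-linear} applied to $\mathcal{L}/F$ gives $A_F(\chi)=B_F(\chi)$ for every linear character $\chi$ of $\mathcal{H}=\Gal(\mathcal{L}/F)$ and every intermediate totally real field $F$; by the previous step $A(\mathrm{Ind}_{\mathcal{H}}^{\mathcal{G}}\chi)=B(\mathrm{Ind}_{\mathcal{H}}^{\mathcal{G}}\chi)$ for all such pairs. By Brauer's induction theorem these elements generate $R_{\CC_p}(\mathcal{G})$, so the homomorphisms $A$ and $B$ coincide; in particular $A(\psi)=B(\psi)$ for every $\psi\in\Irr_{\CC_p}(\mathcal{G})$, which is Conjecture~\ref{c7-conj:Gross-II}. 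The main obstacle is the inductivity statement in the third paragraph: inductivity of the complex $L$-function is classical and that of the $p$-adic $L$-function is essentially built into its definition, but the matching inductivity of the regulator $R_{p,S}^{(\iota)}$ --- compatibility of both Dirichlet-type maps with change of base field, the $\omega$-twist handled via the projection formula, and the bookkeeping of $S$ and of the auxiliary CM-extension through which the characters factor --- must be carried out with care. A more structural point is that $A$ is a well-defined multiplicative function on virtual characters only because the full hypothesis (Conjecture~\ref{c7-conj:Gross-I} for every $\psi$, hence for every virtual character) identifies $r_S(\beta)$ with the true order of vanishing of $L_{p,S}(s,\beta)$; without this, cross terms in the product expansion need not vanish and the coefficients at the predicted order need neither be nonzero nor multiply.
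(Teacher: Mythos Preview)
Your proof is correct and follows exactly the strategy of the paper, whose entire argument reads ``This follows from Theorem~\ref{c7-thm:Gross-linear} by Brauer induction.'' You have carefully unpacked what this sentence entails --- multiplicativity of both sides on virtual characters, inductivity of the $p$-adic and complex $L$-functions and of the regulator, and the crucial observation that the hypothesis on Conjecture~\ref{c7-conj:Gross-I} is precisely what makes $A(\beta)=L_{p,S}^{r_S(\beta)}(0,\beta)$ a well-defined nonzero leading coefficient (hence multiplicative) --- all of which the paper leaves to the reader.
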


\begin{proof}
	This follows from Theorem \ref{c7-thm:Gross-linear}
	by Brauer induction.
\end{proof}

\subsection{Conditional Results}

We can now state the following result which has been proved 
by Johnston and the author
\cite[Thm.~5.2 and Cor.~5.4]{c7-JN17b}.
We refer to the `equivariant Iwasawa main conjecture' (EIMC)
\index{Iwasawa main conjecture}
for totally real fields (see \cite{c7-Ka13} or 
\cite{c7-RW11}, for instance).

\begin{theorem} \label{c7-thm:Brumer-Stark}
	Let\/ $L/K$ be a Galois CM-extension with Galois group\/ $G$ and
	let\/ $p$ be an odd prime. Suppose that the EIMC for the extension\/
	$L(\zeta_p)_{\infty}^+ / K$ holds. Suppose further that
	Conjecture~\textnormal{\ref{c7-conj:int-property}} holds for all irreducible
	characters of\/ $\Gal(L(\zeta_p)_{\infty}^+ / K)$ which factor
	through\/ $\Gal(L^+/K)$. Then, $SBS(L/K,S,p)$ and thus\/
	$BS(L/K,S,p)$ and\/ $B(L/K,S,p)$ are true for every finite set\/ $S$
	of places of\/ $K$ containing\/ $S_p \cup S_{\ram} \cup S_{\infty}$.
\end{theorem}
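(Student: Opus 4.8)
The plan is to deduce $SBS(L/K,S,p)$ from the equivariant Iwasawa main conjecture by descent from the Iwasawa-theoretic setting to the finite level, using Conjecture~\ref{c7-conj:int-property} precisely to bridge the gap between $p$-adic and complex $L$-values at $s=0$. First I would pass to the cyclotomic tower: set $\mathcal{L} := L(\zeta_p)_\infty^+$, $\mathcal{G} := \Gal(\mathcal{L}/K)$, and consider the Iwasawa module $X_\infty$ built from inverse limits of the relevant ray class groups (or the cohomology of the complex $R\Gamma_c$) along the tower. The EIMC identifies a characteristic element of this module with the Iwasawa-theoretic Stickelberger/$L$-series element living in $K_1$ of the relevant Iwasawa algebra; concretely it provides an element whose image under the reduced-norm/evaluation maps recovers the power series $G_{\psi,S}(T)$ from \S4.3 for each character $\psi$. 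The key point is that this gives an \emph{integral} statement at the level of the Iwasawa algebra, which upon specialisation will produce the denominator-ideal factor $\mathcal{H}(G)$ and the factor $\tfrac12$ appearing in the definition of $SBS(L/K,S,p)$.

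Next I would perform the descent. Specialising the Iwasawa main conjecture at the trivial character of $\Gamma$ (i.e.\ evaluating the power series at $T=0$, equivalently twisting by $\kappa^0$) relates the characteristic element to $L_{p,S}(0,\psi)$ for the characters $\psi$ of $\mathcal{G}$ factoring through $\Gal(\mathcal{L}/K_\infty)$, hence through $\Gal(L(\zeta_p)^+/K)$ and ultimately through $\Gal(L^+/K)$. At this stage the statement one obtains annihilates $\cl_L^T(p)$ but is expressed in terms of $p$-adic $L$-values. Here Conjecture~\ref{c7-conj:int-property} enters: it equates $L_{p,S}(0,\psi)$ with $\iota(L_S(0,(\psi\omega^{-1})^\iota))$ for exactly the characters in question, and the twist by $\omega^{-1}$ is what converts the totally real $p$-adic object into the minus-part data that controls $\theta_S^T$ for the CM-extension $L/K$. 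Combining this with the behaviour of the $\mathcal{H}_p(G)$-multiplication under the relevant projections and with Lemma~\ref{c7-lem:Coates} to handle the $\delta_T(0)$-factors (and hence the passage between the bare Stickelberger element and the $(S,T)$-modified one that genuinely acts on $\cl_L^T(p)$) yields
\[
\mathcal{H}_p(G)\cdot\tfrac12\cdot\theta_S^T\ \subseteq\ \Ann_{\zeta(\ZZ_p[G])}(\cl_L^T(p))
\]
for all admissible $T$, which is the assertion $SBS(L/K,S,p)$. The implications $SBS(L/K,S,p)\Rightarrow BS(L/K,S,p)\Rightarrow B(L/K,S,p)$ are then immediate from Proposition~\ref{c7-prop:SBS-BS} (in its prime-by-prime form) and Lemma~\ref{c7-lem:BS-implies-B}, and the enlargement of $S$ is harmless by Lemma~\ref{c7-lem:dependence-S}; note the hypothesis only requires $S\supseteq S_p\cup S_{\ram}\cup S_\infty$, with $S_p$ forced in because $p$-adic $L$-functions are intrinsically $S_p$-truncated.

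The hard part will be the descent step: making the specialisation of the EIMC rigorous requires controlling the error terms in passing from the Iwasawa algebra $\Lambda(\mathcal{G})$ down to $\ZZ_p[G]$, i.e.\ showing that the characteristic element actually maps into $\mathcal{H}_p(G)\cdot\tfrac12\cdot\theta_S^T$ and that no spurious torsion in the inverse limit (coming from the $\mu$-invariant, from the difference between $X_\infty$ and the naive limit of class groups, or from the Weierstrass-preparation bookkeeping in $G_{\psi,S}(T)/H_\psi(T)$) obstructs the conclusion. This is precisely where the non-commutativity of $\mathcal{G}$ forces the appearance of the denominator ideal rather than just the centre, and where one must invoke the integrality results of \S3 together with the hybrid/decomposition techniques; the factor $\tfrac12$ is the residual ambiguity at the prime $2$ inherent in the minus-part theory, harmless here since $p$ is odd but still requiring care in the comparison of lattices. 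I expect the remaining ingredients — the formalism of reduced norms, the functoriality of $L$-functions under inflation along the tower, and the exact sequence relating $E_{L,S}^T$, $\cl_L^T$ and $\cl_L$ — to be routine, so that the entire weight of the argument rests on the EIMC input and the $p$-adic-to-complex comparison supplied by Conjecture~\ref{c7-conj:int-property}.
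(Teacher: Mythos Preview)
The paper does not actually prove this theorem; it simply records that the result ``has been proved by Johnston and the author \cite[Thm.~5.2 and Cor.~5.4]{c7-JN17b}'' and moves on. So there is no in-paper proof to compare against.

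That said, your sketch is broadly faithful to the strategy of \cite{c7-JN17b}: one starts from the EIMC for $\mathcal{L}=L(\zeta_p)_\infty^+$ over $K$, which pins down a characteristic element in (a localisation of) $K_1$ of the Iwasawa algebra whose reduced norms are the $p$-adic $L$-series, and then descends to the finite level $\ZZ_p[G]$ to produce an annihilator of $\cl_L^T(p)$. Conjecture~\ref{c7-conj:int-property} is used exactly as you say, to replace $L_{p,S}(0,\psi)$ by the archimedean value and hence recover $\theta_S^T$ on the minus side; the hypothesis $S\supseteq S_p$ is forced for the reason you give. The final chain $SBS\Rightarrow BS\Rightarrow B$ via Proposition~\ref{c7-prop:SBS-BS} and Lemma~\ref{c7-lem:BS-implies-B} is correct.

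Two points where your description is slightly off or vague. First, the descent is not literally ``evaluation at $T=0$ for characters factoring through $H$'': one base-changes the main-conjecture identity along the natural ring map $\Lambda(\mathcal{G})\to\ZZ_p[\Gal(L(\zeta_p)/K)]$ and then projects to the minus part; the relevant characters are those of $\mathcal{G}$ that factor through $\Gal(L^+/K)$ after the $\omega$-twist, not those factoring through $\Gal(\mathcal{L}/K_\infty)$. Second, the appearance of $\mathcal{H}_p(G)$ is not an ``error term'' from the descent but comes from the general machinery of non-commutative Fitting invariants: the EIMC computes a Fitting invariant of the relevant complex, and the passage from Fitting invariants to annihilators is exactly what introduces the factor $\mathcal{H}_p(G)$ (cf.\ \cite{c7-JN13,c7-Ni10}). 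Your paragraph on ``the hard part'' conflates several distinct issues; in \cite{c7-JN17b} the $\mu$-invariant plays no role once the EIMC is \emph{assumed}, and the factor $\tfrac12$ arises from the idempotent $\tfrac{1-j}{2}$ cutting out the minus part, not from any lattice comparison at $p=2$.
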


\begin{remark}	
	\index{$\mu$-invariant}
	We write $\mu_p(F)$ for the $p$-adic $\mu$-invariant attached to
	the cyclotomic $\ZZ_p$-extension of a number field $F$ 
	(see \cite[Rem.~4.3]{c7-JN17a} for details).
	When $\mu_p(L(\zeta_p)^+)$ vanishes, then the EIMC
	has been proved independently by Kakde
	\cite{c7-Ka13} and Ritter and Weiss	\cite{c7-RW11}.
	Without assuming the vanishing of $\mu$-invariants
	considerable progress has been made in
	\cite{c7-JN17a}. This includes the case $p \nmid |G|$. \exend
\end{remark}

\begin{remark}	
	Suppose that $L/K$ is abelian.
	Then, Conjecture~\ref{c7-conj:int-property} holds by
	\eqref{c7-eqn:interpolation-property}.
	Under the somewhat stronger condition that $\mu$ vanishes,
	Theorem~\ref{c7-thm:Brumer-Stark} has been shown
	by Greither and Popescu \cite{c7-GP15} by an entirely different method.
	This method has been generalised to arbitrary Galois CM-extensions
	by the author in \cite{c7-Ni13}.  \exend
\end{remark}

In order to get rid of the $p$-adic places one has to assume the full
strength of Gross' conjecture.

\begin{theorem}
	\index{equivariant Tamagawa number conjecture}
	Let\/ $L/K$ be a Galois CM-extension with Galois group\/ $G$ and
	let\/ $p$ be an odd prime. Suppose that\/ $\mu_p(L^+)$ vanishes
	or that\/ $p \nmid |G|$. Suppose further that Gross' Conjecture
	\textnormal{\ref{c7-conj:Gross-I}} holds for all\/ $\psi \in 
	\Irr_{\CC_p}(\Gal(L^+_{\infty} / K))$. Then, the minus\/ $p$-part
	of the ETNC for the pair\/ $h^0(\mathrm{Spec}(L), \ZZ[G])$ holds.
	In particular, both\/ $BS(L/K,S,p)$ and\/ $B(L/K,S,p)$ are true for
	all finite sets\/ $S$ of places of\/ $K$ containing\/
	$S_{\ram} \cup S_{\infty}$.
\end{theorem}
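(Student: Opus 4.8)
The plan is to establish the minus $p$-part of the ETNC for the pair $(h^{0}(\mathrm{Spec}(L)),\ZZ[G])$ and then to deduce $BS(L/K,S,p)$ and $B(L/K,S,p)$ from Theorem~\ref{c7-thm:ETNC-implies-BS}. The route to the ETNC is a descent argument: the minus part of the leading-term conjecture over $\ZZ_{p}[G]$ at $s=0$ is to be obtained from the equivariant Iwasawa main conjecture for the totally real one-dimensional $p$-adic Lie extension $L(\zeta_{p})_{\infty}^{+}/K$ by specialising at $s=0$, the discrepancy between the Iwasawa-theoretic invariant and the finite-level element $T\Omega(L/K,\ZZ[G])$ being measured precisely by the $p$-adic regulators $R_{p,S}^{(\iota)}(\psi)$ and by the comparison of $p$-adic and complex leading terms, that is, by Gross' Conjectures~\ref{c7-conj:Gross-I} and~\ref{c7-conj:Gross-II}. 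This is the mechanism of \cite{c7-Bu17} already invoked in the proof of Theorem~\ref{c7-thm:ETNC-implies-BS}.

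First I would check the two inputs of the descent. For the EIMC: if $p\nmid|G|$ it is supplied by \cite{c7-JN17a}, and if $\mu_{p}(L^{+})$ vanishes then the EIMC for $L(\zeta_{p})_{\infty}^{+}/K$ is known by the work of Kakde \cite{c7-Ka13} and Ritter--Weiss \cite{c7-RW11} together with \cite{c7-JN17a}. For Gross: by hypothesis Conjecture~\ref{c7-conj:Gross-I} holds for every $\psi\in\Irr_{\CC_{p}}(\Gal(L_{\infty}^{+}/K))$, which by Proposition~\ref{c7-prop:vanishing-re} amounts to $R_{p,S}^{(\iota)}(\psi)\neq 0$ for all such $\psi$. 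Applying the Corollary following Theorem~\ref{c7-thm:Gross-linear} --- whose proof combines the linear case of \cite{c7-DKV17} with Brauer induction --- then upgrades this to Conjecture~\ref{c7-conj:Gross-II}, and hence to Conjecture~\ref{c7-conj:int-property}, for all these $\psi$: when $r_{S}(\psi)=0$ the regulator is an empty determinant and the claim is the interpolation identity at $s=0$, while when $r_{S}(\psi)>0$ both sides vanish, the $p$-adic side because the order of vanishing is positive by Conjecture~\ref{c7-conj:Gross-I}. In particular this covers all characters factoring through $\Gal(L^{+}/K)$, so the hypotheses of Theorem~\ref{c7-thm:Brumer-Stark} are met; and after the usual reductions (inflation, induction, Brauer induction and the monomial case of Conjecture~\ref{c7-conj:int-property}) it provides the Gross-type data the descent for $L(\zeta_{p})_{\infty}^{+}/K$ requires.

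Feeding the EIMC and the full Gross conjecture into the descent formalism of \cite{c7-Bu17}, I expect to obtain the minus $p$-part of the ETNC for $(h^{0}(\mathrm{Spec}(L)),\ZZ[G])$; Theorem~\ref{c7-thm:ETNC-implies-BS} then yields $BS(L/K,S,p)$ and $B(L/K,S,p)$ for every finite $S\supseteq S_{\ram}\cup S_{\infty}$ (in contrast to the route through Theorem~\ref{c7-thm:Brumer-Stark}, which gives only the weaker $SBS(L/K,S,p)$ and in addition requires $S$ to contain the $p$-adic places). The hard part is the descent itself: one must identify $T\Omega(L/K,\ZZ[G])$, in its minus and $p$-primary component, with the specialisation at $s=0$ of the Iwasawa invariant and prove that the resulting error term is trivial. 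The genuinely delicate point is the exceptional behaviour at $s=0$ --- the interpolation that works for twists $r\geq 1$ breaks down at $r=1$, i.e.\ at $s=0$ (see the discussion preceding Conjecture~\ref{c7-conj:int-property}), so one cannot avoid the leading-term formula of Gross' Conjecture~\ref{c7-conj:Gross-II} and must feed in precisely the non-vanishing of the regulators $R_{p,S}^{(\iota)}(\psi)$ --- on top of which one must keep careful track of the plus/minus decomposition, of the Teichm\"{u}ller twist by $\omega^{-1}$ forcing the passage from $L$ to $L(\zeta_{p})$, and of the dependence on the chosen topological generator $\gamma_{K}$.
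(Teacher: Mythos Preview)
Your approach is correct and matches the paper's: the paper's proof simply cites \cite[Cors.~3.8 and 3.11]{c7-Bu17} for the ETNC statement and then invokes Theorem~\ref{c7-thm:ETNC-implies-BS} for the last part, and what you have written is an accurate unpacking of precisely that mechanism (EIMC together with the full Gross conjecture fed into Burns' descent, followed by Theorem~\ref{c7-thm:ETNC-implies-BS}). One small comment: your detour through Theorem~\ref{c7-thm:Brumer-Stark} is unnecessary here and slightly muddies the exposition --- you correctly observe yourself that it only yields the weaker conclusion with $S_p\subseteq S$, so it plays no role in the argument and can be dropped.
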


\begin{proof}
	This is \cite[Cors.~3.8 and 3.11]{c7-Bu17}. Note that the last part
	follows from Theorem \ref{c7-thm:ETNC-implies-BS}.
\end{proof}

\subsection{Unconditional results}
We now discuss certain cases where the Brumer--Stark conjecture
holds unconditionally. Let $p$ be a prime and let $G$ be a finite group.
For a normal subgroup $N \unlhd G$, let $e^{}_{N} = |N|^{-1}\sum_{\sigma \in N} \sigma$
be the associated central trace idempotent in the group algebra $\QQ_{p}[G]$.

\begin{definition} \label{c7-def:hybrid}
	\index{hybrid group ring}
	Let $N \unlhd G$. We say that the $p$-adic group ring $\ZZ_{p}[G]$ is 
	\emph{$N$-hybrid} if (i) $e_{N} \in \ZZ_{p}[G]$ (i.e. $p \nmid |N|$) and (ii) 
	$\ZZ_{p}[G](1-e_{N})$ is a maximal $\ZZ_{p}$-order in $\QQ_{p}[G](1-e_{N})$.
\end{definition}

\begin{theorem}\label{c7-thm:unconditional-BS}
	Let\/ $L/K$ be a finite Galois CM-extension of number fields.
	Let\/ $N$ be a normal subgroup of\/ $G:=\Gal(L^{+}/K)$ and let\/ $F=(L^{+})^{N}$.
	Let\/ $p$ be an odd prime and let\/ $\overline{P}$ be a Sylow\/ $p$-subgroup 
	of\/ $\overline{G}:=\Gal(F/K) \simeq G/N$.
	Suppose that\/ $\ZZ_{p}[G]$ is\/ $N$-hybrid, $G$ is monomial, 
	\index{group!monomial} and\/ $F^{\overline{P}}/\QQ$ is abelian.
	Let\/ $S$ be a finite set of places of\/ $K$ such that\/ 
	$S_{p} \cup S_{\ram}(L/K) \cup S_{\infty} \subseteq S$.
	Then, both $BS(L/K,S,p)$ and\/ $B(L/K,S,p)$ are true.
\end{theorem}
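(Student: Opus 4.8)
The plan is to combine the conditional result of Theorem~\ref{c7-thm:Brumer-Stark} with a descent along the normal subgroup $N$, reducing the monomial CM-extension $L/K$ to an abelian situation over the base $F = (L^+)^N$ where the needed interpolation property is already known. First I would invoke the remark following Theorem~\ref{c7-thm:Brumer-Stark}: since $\ZZ_p[G]$ is $N$-hybrid, in particular $p \nmid |N|$, and one sees that the $\mu$-invariant issue is controlled — more precisely, the EIMC for the relevant totally real extension follows from the work of Kakde and of Ritter--Weiss once one knows vanishing of the appropriate $\mu$-invariant, and the hybrid hypothesis together with the structure of $\overline{P}$ lets us isolate the part of the group ring where this is genuinely needed. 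So the first step is to reduce, via the hybrid decomposition $\ZZ_p[G] = \ZZ_p[G]e_N \times \ZZ_p[G](1-e_N)$, the statement $SBS(L/K,S,p)$ to two pieces: the $(1-e_N)$-part, where the group ring is a maximal order and Stickelberger elements are automatically integral, so the conjecture reduces to the weak Brumer--Stark statement handled by Theorem~\ref{c7-thm:SS-implies-BSw} (or rather its prime-by-prime refinement, Theorem~\ref{c7-thm:weak-results}, whose hypotheses I would need to check are met under the stated assumptions); and the $e_N$-part, which is governed by the quotient $\overline{G} = G/N$ and the field $F$.

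For the $e_N$-part, the idea is that $\ZZ_p[G]e_N \cong \ZZ_p[\overline{G}]$, so the relevant arithmetic is that of the CM-extension determined by $\overline{G}$ over $K$, with totally real part $F/K$. Here I would apply Theorem~\ref{c7-thm:Brumer-Stark} with $F$ in place of $L^+$: its hypotheses require the EIMC for $F(\zeta_p)_\infty/K$ and Conjecture~\ref{c7-conj:int-property} (Gross' interpolation at $0$) for all irreducible characters of $\Gal(F(\zeta_p)_\infty^+/K)$ factoring through $\Gal(F/K)$. The EIMC is available because, after passing to the Sylow $p$-subgroup $\overline{P}$ of $\overline{G}$, the hypothesis that $F^{\overline{P}}/\QQ$ is abelian forces the relevant $\mu$-invariant to vanish (one reduces $\mu$-vanishing for $F$ to $\mu$-vanishing for the abelian field $F^{\overline{P}}$, which is a theorem of Ferrero--Washington). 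For the interpolation property: $G$ monomial implies $\overline{G}$ is monomial (a quotient of a monomial group — though this actually needs the sharper fact that the relevant characters are still monomial, which follows since $G$ monomial and we are looking at characters inflated from $\overline{G}$), and by the remark after Conjecture~\ref{c7-conj:int-property} the interpolation property holds for monomial characters. This yields $SBS(F/K,S,p)$, and then I would need a functoriality/inflation statement saying that the $e_N$-part of $SBS(L/K,S,p)$ is implied by $SBS(F/K,S,p)$ — essentially that the Stickelberger element $\theta_S^T(L/K)e_N$ is the inflation of $\theta_S^T(F/K)$ and that inflation is compatible with the annihilation statement for class groups (using that $\cl_F^T(p)$ is a quotient of $\cl_L^T(p)$ in the appropriate sense, or norm-functoriality of ray class groups).

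Having established $SBS(L/K,S,p)$ from both parts, the conclusion $BS(L/K,S,p)$ follows from Proposition~\ref{c7-prop:SBS-BS}, and $B(L/K,S,p)$ then follows from Lemma~\ref{c7-lem:BS-implies-B}. The main obstacle I expect is the $(1-e_N)$ (maximal order) part: one must verify that the splitting-of-primes hypothesis in Theorem~\ref{c7-thm:weak-results} — namely that no prime of $L^+$ above $p$ splits in $L$ in the exceptional case $L' \subseteq (L')^+(\zeta_p)$ — is automatically satisfied here, or else argue that the $N$-hybrid hypothesis together with monomiality bypasses that exceptional case entirely; alternatively one works directly with the minus-$p$-part of the ETNC over the maximal order (Theorem~\ref{c7-thm:SS-implies-BSw}), which for monomial $G$ is accessible. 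The secondary delicate point is the precise descent lemma for $SBS$ along $N$: the hybrid splitting makes this essentially formal, but checking that the denominator ideal $\mathcal{H}_p(G)$ respects the decomposition — i.e. $\mathcal{H}_p(G) = \mathcal{H}_p(\overline{G}) \times \zeta(\mathfrak{M})$ under the hybrid splitting, which it does because $\mathcal{H}_p$ of a maximal order is the whole centre — requires citing the relevant properties of $\mathcal{H}_p$ from \cite{c7-JN13}. I expect these to be routine given the machinery already in place, so the real work is concentrated in handling the maximal-order part cleanly.
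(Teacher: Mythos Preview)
Your strategy of splitting the Brumer--Stark statement itself along the hybrid decomposition is the wrong level at which to use the $N$-hybrid hypothesis, and it creates a genuine gap on the $(1-e_N)$-piece. Neither of your two proposed routes there is available under the stated assumptions: Theorem~\ref{c7-thm:weak-results} carries a splitting condition on $p$-adic places that you have not verified (and which need not hold), while Theorem~\ref{c7-thm:SS-implies-BSw} presupposes the minus part of the ETNC over the maximal order, i.e.\ the strong Stark conjecture, which is not known unconditionally even for monomial $G$. So the maximal-order part, which you flag as ``the real work'', does not close.

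The paper's argument is both simpler and avoids this gap: one applies Theorem~\ref{c7-thm:Brumer-Stark} \emph{directly} to $L/K$, without any splitting of the $SBS$ statement. The two hypotheses of that theorem are verified as follows. First, the $N$-hybrid hypothesis is used at the \emph{Iwasawa-algebra} level (this is the content of \cite{c7-JN17a}): the hybrid decomposition lifts to the Iwasawa algebra of $\Gal(L(\zeta_p)_\infty^+/K)$, and the EIMC is then established piece by piece --- on the maximal-order piece it is essentially automatic from the existence of $p$-adic $L$-functions, while on the $e_N$-piece it reduces to the EIMC for the quotient, where the condition $F^{\overline P}/\QQ$ abelian gives $\mu$-vanishing via Ferrero--Washington and hence the EIMC by Kakde/Ritter--Weiss. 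Second, Conjecture~\ref{c7-conj:int-property} is needed for irreducible characters of $\Gal(L(\zeta_p)_\infty^+/K)$ factoring through $G = \Gal(L^+/K)$; since $G$ itself is monomial, each such character is monomial and the remark after Conjecture~\ref{c7-conj:int-property} applies. (Your worry about quotients of monomial groups is therefore beside the point: one never needs $\overline G$ to be monomial.) With both hypotheses in hand, Theorem~\ref{c7-thm:Brumer-Stark} yields $SBS(L/K,S,p)$ for $L/K$ directly, and $BS$, $B$ follow.
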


\begin{proof}
	It follows from the theory of hybrid Iwasawa algebras
	\cite{c7-JN17a} that the relevant case of the EIMC holds.
	We also recall that Conjecture \ref{c7-conj:int-property} holds
	for monomial characters.
	Then the result follows from Theorem \ref{c7-thm:Brumer-Stark}.
	See \cite[Thm.~10.5]{c7-JN17b} for details.
\end{proof}

We recall that a \emph{Frobenius group}
\index{group!Frobenius}
is a finite group $G$ 
with a proper nontrivial subgroup $V$
such that $V\cap gVg^{-1}=\{ 1 \}$ for all $g \in G-V$,
in which case $V$ is called a \emph{Frobenius complement}.
A Frobenius group $G$ contains a unique normal subgroup $U$, 
known as the \emph{Frobenius kernel}, such that
$G$ is a semidirect product $U \rtimes V$. 

\begin{corollary}\label{c7-cor:BS-Frob}
	Let\/ $L/K$ be a finite Galois CM-extension of number fields and 
	let\/ $G=\Gal(L^{+}/K)$.
	Suppose that\/ $G = U \rtimes V$ is a Frobenius group with Frobenius 
	kernel\/ $U$ and abelian Frobenius complement\/ $V$.
	Suppose further that\/ $(L^{+})^{U}/\QQ$ is abelian (in particular, this 
	is the case when\/ $K=\QQ$).
	Let\/ $p$ be an odd prime and let\/ $S$ be a finite set of places of\/ $K$ 
	such that\/ $S_{p} \cup S_{\ram}(L/K) \cup S_{\infty} \subseteq S$.
	Suppose that either\/ $p \nmid |U|$ or\/ $U$ is a\/ $p$-group (in particular, 
	this is the case if\/ $U$ is an\/ $\ell$-group for any prime\/ $\ell$.)
	Then, both\/ $BS(L/K,S,p)$ and\/ $B(L/K,S,p)$ are true.
\end{corollary}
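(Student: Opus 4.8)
The plan is to deduce the corollary from Theorem \ref{c7-thm:unconditional-BS} by choosing the normal subgroup $N \unlhd G$ appropriately in the two cases; the engine behind that theorem is Theorem \ref{c7-thm:Brumer-Stark}, so the two things one ultimately needs are the relevant EIMC (supplied by the hybrid Iwasawa algebra machinery of \cite{c7-JN17a}) and Conjecture \ref{c7-conj:int-property} for the characters of $G$. Before splitting into cases I would first check that $G$ is monomial, which makes Conjecture \ref{c7-conj:int-property} available for all its characters. This is where the shape $G = U \rtimes V$ with $V$ abelian is used: by Thompson's theorem the Frobenius kernel $U$ is nilpotent, hence monomial by \cite[Thm.~11.3]{c7-CR81}; an irreducible character $\chi$ of $G$ with $U \subseteq \ker\chi$ factors through the abelian group $G/U \simeq V$ and is therefore linear; and if $U \not\subseteq \ker\chi$, the Clifford theory of Frobenius groups gives $\chi = \mathrm{Ind}_U^G\psi$ with $1 \ne \psi \in \Irr_\CC(U)$, so writing $\psi = \mathrm{Ind}_{U_0}^U\lambda$ with $\lambda$ linear and using transitivity of induction shows $\chi = \mathrm{Ind}_{U_0}^G\lambda$ is monomial.

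If $U$ is a $p$-group, I would apply Theorem \ref{c7-thm:unconditional-BS} with $N = 1$. Since $\gcd(|U|,|V|) = 1$ for a Frobenius group, $U$ is then the unique Sylow $p$-subgroup of $G$, so $\overline{P} = U$ and $F^{\overline{P}} = (L^{+})^{U}$, which is abelian over $\QQ$ by hypothesis; and the $N$-hybrid condition of Definition \ref{c7-def:hybrid} is vacuous for $N = 1$, since then $1 - e_N = 0$. Hence Theorem \ref{c7-thm:unconditional-BS} applies directly and yields $BS(L/K,S,p)$ and $B(L/K,S,p)$.

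If $p \nmid |U|$, I would apply Theorem \ref{c7-thm:unconditional-BS} with $N = U$, so that $F = (L^{+})^{U}$ and $\overline{G} \simeq V$ is abelian; then $F^{\overline{P}}$ is an intermediate field of $F/K$, so $F^{\overline{P}}/\QQ$ is abelian because $F/\QQ$ is. The substantive step is to verify that $\ZZ_p[G]$ is $U$-hybrid. Condition (i) of Definition \ref{c7-def:hybrid} holds since $p \nmid |U|$. For (ii), note that as $e_U$ is central, every block of $\ZZ_p[G]$ lies either in $e_U \ZZ_p[G] \simeq \ZZ_p[G/U]$ or in $\ZZ_p[G](1 - e_U)$; the latter blocks are precisely those containing a character $\chi \in \Irr_{\CC_p}(G)$ with $U \not\subseteq \ker\chi$, and for such $\chi$ we have $\chi = \mathrm{Ind}_U^G\psi$ with $1 \ne \psi \in \Irr_{\CC_p}(U)$. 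By Brauer's permutation lemma, the fixed-point-free action of $V$ on $U$ implies that no non-trivial element of $V$ fixes any non-trivial irreducible character of $U$, so $\mathrm{Stab}_V(\psi) = 1$ and $\chi(1) = |V| \psi(1)$; since $p \nmid |U|$ gives $\psi(1)_p = 1$ and $|G|_p = |V|_p$, we conclude $\chi(1)_p = |G|_p$, i.e. $\chi$ lies in a block of defect zero. A defect-zero block is a maximal $\ZZ_p$-order, so $\ZZ_p[G](1 - e_U)$ is a finite product of maximal orders and hence maximal, and $\ZZ_p[G]$ is $U$-hybrid. Theorem \ref{c7-thm:unconditional-BS} then again gives $BS(L/K,S,p)$ and $B(L/K,S,p)$.

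The only step that genuinely uses the Frobenius hypothesis beyond monomiality is this last identification of $\ZZ_p[G](1-e_U)$ as a maximal order when $p \nmid |U|$, and even there the real content is just Brauer's permutation lemma together with the standard fact that defect-zero blocks are maximal orders; the rest is a matter of choosing $N$ correctly — in particular noticing that the two cases force different choices of $N$ — and unwinding the definitions and the hypotheses of Theorem \ref{c7-thm:unconditional-BS}.
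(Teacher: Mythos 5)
Your proof is correct and follows essentially the same route as the paper: the same case split, the same choices $N=U$ (when $p\nmid|U|$) and $N=\{1\}$ (when $U$ is a $p$-group), and the same appeal to Theorem \ref{c7-thm:unconditional-BS}. The only difference is that where the paper cites \cite[Lemma 9.7]{c7-JN17b} for monomiality and \cite[Prop.~9.4]{c7-JN17b} for $U$-hybridity, you supply the underlying arguments (Thompson plus Clifford theory, and the defect-zero block computation via Brauer's permutation lemma), and these are accurate.
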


\begin{proof}
	This is \cite[Cor.~10.7]{c7-JN17b}. We recall the proof for convenience.
	First note that $G$ is monomial by \cite[Lemma 9.7]{c7-JN17b} since $V$ is abelian.
	Suppose that $p \nmid |U|$. Let $N=U$ and $F=(L^{+})^{N}$. Then $\ZZ_{p}[G]$ 
	is $N$-hybrid by \cite[Prop.~9.4]{c7-JN17b}.
	Hence, the desired result follows from Theorem \ref{c7-thm:unconditional-BS} 
	in this case since $F/\QQ$ is abelian, which forces $F^{\overline{P}}/\QQ$ to be abelian.
	Suppose that $U$ is a $p$-group. Taking $N=\{1\}$ and $F=L^{+}$, we apply
	Theorem \ref{c7-thm:unconditional-BS} with $\overline{G}=G$ and $\overline{P}=U$ to obtain the desired result. 
\end{proof}

\begin{example}
	In particular, $U$ is an $\ell$-group in Corollary \ref{c7-cor:BS-Frob}  
	when $G$ is one of the following Frobenius groups
	(for a natural number $n$ we denote the cyclic
	group of order $n$ by $C_n$):
	\begin{itemize}
		\item \index{group!of affine transformations}
		$G \simeq \mathrm{Aff}(q) = \FF_q \rtimes \FF_q^{\times}$,
		where $q$ is a prime power and
		$\mathrm{Aff}(q)$ is the group of affine transformations on $\FF_q$,
		\item $G \simeq C_{\ell} \rtimes C_{q}$, where $q<\ell$ are distinct primes such that $q \mid (\ell-1)$ and $C_{q}$ acts on $C_{\ell}$ via an embedding $C_{q} \hookrightarrow \Aut(C_{\ell})$,
		\item $G$ is isomorphic to any of the Frobe\-nius groups con\-struct\-ed 
		in  \cite[Ex.~2.11]{c7-JN17a}. 
	\end{itemize} 
	Note that in particular $\mathrm{Aff}(3) \simeq S_3$
	is the symmetric group on $3$ letters \index{group!symmetric}
	(which is the smallest non-abelian group) and
	$\mathrm{Aff}(4) \simeq A_4$ is the alternating group
	on $4$ letters.
	\exend
\end{example}

In certain situations, we can also remove the condition that 
$S_{p} \subseteq S$. 
To illustrate this, we conclude with the following two results
(the first is \cite[Thm.~10.10]{c7-JN17b}, whereas the second 
is due to Nomura \cite{c7-No14a, c7-No14b}).

\begin{theorem}
	Let\/ $L/\QQ$ be a finite Galois CM-extension of the rationals.
	Suppose that\/ $\Gal(L / \QQ) \simeq \langle j \rangle \times G$, where\/ 
	$G =\Gal(L^{+}/\QQ) = N \rtimes V$
	is a Frobenius group with Frobenius kernel $N$ and abelian Frobenius 
	complement\/ $V$.
	Suppose further that\/ $N$ is an\/ $\ell$-group for some prime\/ $\ell$.
	Then, both\/ $BS(L/\QQ,S,p)$ and\/ $B(L/\QQ,S,p)$ are true for every 
	odd prime\/ $p$ and every finite set\/ $S$ of places of\/ $\QQ$ such 
	that\/ $S_{\ram}(L/\QQ) \cup S_{\infty} \subseteq S$. \qed
\end{theorem}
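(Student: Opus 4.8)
The plan is to reduce the assertion to its minus part at $p$ and then to import the argument behind Theorem~\ref{c7-thm:unconditional-BS}, isolating the single point where the hypothesis $S_p\subseteq S$ was used. Since $p$ is odd, the idempotents $\tfrac12(1\pm j)$ lie in $\ZZ_p[\Gal(L/\QQ)]=\ZZ_p[\langle j\rangle]\otimes_{\ZZ_p}\ZZ_p[G]$, so every module and element occurring in $B(L/\QQ,S,p)$ and $BS(L/\QQ,S,p)$ splits, and only the minus part matters; it is a module over $\ZZ_p[G]$ with $G=N\rtimes V$. As $V$ is abelian, $G$ is monomial by \cite[Lemma~9.7]{c7-JN17b}; as $N$ is an $\ell$-group, either $p\nmid|N|$ or $N$ is a $p$-group; and $(L^+)^{N}/\QQ$ is abelian because $K=\QQ$. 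Thus the hypotheses of Corollary~\ref{c7-cor:BS-Frob} and Theorem~\ref{c7-thm:unconditional-BS} hold, and the only obstruction to quoting them is that their conclusions assume $S_p\subseteq S$. If $p$ ramifies in $L$ then $p\in S_{\ram}\subseteq S$ and we are done; so assume henceforth that $p$ is unramified in $L$, let $w$ be the fixed place of $L$ above $p$, and write $G_w$ and $\phi_w$ for its decomposition group and a Frobenius lift.

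By Proposition~\ref{c7-prop:SBS-BS} and Lemma~\ref{c7-lem:BS-implies-B} it suffices to prove $SBS(L/\QQ,S,p)$, which can be checked one $\zeta(\CC_p[G])$-component at a time: for an irreducible character $\chi$ of $G$ in the minus part one must show that the $\chi$-component of $\mathcal H_p(G)\cdot\tfrac12\cdot\theta_S^{T}$ annihilates that of $\cl_L^{T}(p)$ for all $T$ with $\hyp(S,T)$. If $r_S(\chi)>0$ then the $\chi$-component of $\theta_S^{T}$ already vanishes by \eqref{c7-eqn:order-of-vanishing}, so there is nothing to prove; hence assume $r_S(\chi)=0$, i.e.\ $V_\chi$ has no invariants under the decomposition group of any finite place of $S$. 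Recalling from the proof of Lemma~\ref{c7-lem:dependence-S} that $\theta_{S\cup S_p}^{T}=\nr(1-\phi_w^{-1})\cdot\theta_S^{T}$, whenever the Euler factor $\det(1-\phi_w^{-1}\mid V_\chi)$ is a $p$-adic unit --- for instance whenever $V_\chi^{G_w}=0$ and $p\nmid|G_w|$, which holds as soon as no place of $L^+$ above $p$ splits in $L$ and $p\nmid|G|$ --- the required annihilation on this component follows at once from $SBS(L/\QQ,S\cup S_p,p)$, which is established in the proof of Corollary~\ref{c7-cor:BS-Frob} via Theorem~\ref{c7-thm:Brumer-Stark}.

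The crux is therefore the case $r_S(\chi)=0$ but $V_\chi^{G_w}\neq0$, i.e.\ a place of $L^+$ above $p$ splits in $L$; then the $\chi$-component of $\theta_{S\cup S_p}^{T}$ vanishes, the $S\cup S_p$-result is vacuous on it, and a fresh argument is required. The plan is to re-run the argument behind Theorem~\ref{c7-thm:unconditional-BS}: the relevant equivariant Iwasawa main conjecture for $L(\zeta_p)_\infty^{+}/\QQ$ holds by the hybrid Iwasawa-algebra machinery of \cite{c7-JN17a} (which does not presuppose the vanishing of any $\mu$-invariant), and its descent to finite level ties the $\chi$-component of an annihilator of $\cl_L^{T}(p)$ to the behaviour at $s=0$ of the $S\cup S_p$-truncated $p$-adic Artin $L$-function attached to $\chi$. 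What is needed here, beyond the input of Theorem~\ref{c7-thm:Brumer-Stark}, is not the value but the \emph{leading coefficient} at $s=0$, together with the fact that the order of vanishing equals $\dim_{\CC_p}V_\chi^{G_w}$ --- equivalently, that the $p$-adic regulator of Proposition~\ref{c7-prop:vanishing-re} is non-zero. This is where monomiality enters: $\chi$ is induced from a linear (odd) character $\mu$ over a totally real subfield $F$, so by Artin functoriality the $p$-adic $L$-function and the regulator of $\chi$ over $\QQ$ reduce to those of $\mu$ over $F$, for which the Gross--Stark leading-term formula is Theorem~\ref{c7-thm:Gross-linear} and the order of vanishing is controlled by Gross' Conjecture~\ref{c7-conj:Gross-I} (known for one-dimensional characters in low-order cases, e.g.\ \cite[Prop.~2.13]{c7-Gr81}). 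Granting this, the descent identifies the relevant $p$-adic quantity with the non-zero regulator times the non-zero complex value $L_S(0,\chi)$; cancelling the regulator leaves $\theta_S^{T}$ acting on $\cl_L^{T}(p)$. I expect the main obstacle to be precisely this last step --- establishing the required order-of-vanishing and $p$-adic non-vanishing statements for the characters that actually arise, and matching them cleanly with the Iwasawa descent at the trivial-zero places; the rest is the familiar monomial reduction together with component-by-component bookkeeping.
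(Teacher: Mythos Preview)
The paper itself gives no proof here --- the statement is simply attributed to \cite[Thm.~10.10]{c7-JN17b} --- so there is no in-text argument to compare against. That said, your proposal has a genuine gap at precisely the point you flag as ``the main obstacle''.

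In the crux case $r_S(\psi)=0$ but $V_\psi^{G_w}\neq 0$, your plan is to descend from the EIMC using the leading term of $L_{p,S\cup S_p}(s,\psi)$ at $s=0$, invoking monomiality to reduce to a linear character $\mu$ over an intermediate field $F$ and then appealing to Theorem~\ref{c7-thm:Gross-linear} (Gross~II) together with Conjecture~\ref{c7-conj:Gross-I} (Gross~I). But Gross~II alone is vacuous when the $p$-adic regulator vanishes, and by Proposition~\ref{c7-prop:vanishing-re} non-vanishing of the regulator is \emph{equivalent} to Gross~I. As the remark preceding Theorem~\ref{c7-thm:Gross-linear} records, Gross~I for linear characters is known only when $r_S(\mu)\leq 1$. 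Here $r_{(S\cup S_p)(F)}(\mu)=\dim V_\psi^{G_w}$, and for the non-linear irreducible characters of a Frobenius group (those induced from non-trivial characters of $N$) this dimension can certainly exceed~$1$; you give no argument bounding it. So your descent step rests on unproven instances of Gross~I, and the proof is incomplete. (There is also a smaller issue: the claim that $SBS$ ``can be checked one $\zeta(\CC_p[G])$-component at a time'' needs justification, since $\zeta(\ZZ_p[G])$ does not in general split along characters; this is exactly where the hybrid decomposition should enter.)

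The argument in \cite{c7-JN17b} avoids Gross~I entirely by exploiting the hybrid structure directly rather than as a mere input to the EIMC. For $p\neq\ell$ one has $\ZZ_p[\langle j\rangle\times G]\simeq \ZZ_p[\langle j\rangle\times V]\times(\text{maximal order})$; on the first factor the extension $L^N/\QQ$ is abelian and the full ETNC (hence $BS$ for every admissible $S$, with no $S_p$ condition) is known by Burns--Greither, while on the maximal-order factor $\mathcal H_p$ coincides with the central conductor, so the weak conjecture suffices and one can feed in the results underlying Theorem~\ref{c7-thm:weak-results}. For $p=\ell$ the kernel $N$ is the full Sylow $p$-subgroup and $(L^+)^N/\QQ$ is abelian, so Ferrero--Washington forces $\mu_p(L^+)=0$ and one again has enough to descend without invoking higher-order cases of Gross~I.
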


\begin{theorem}
	\index{group!dihedral}
	Let\/ $\ell$ be an odd prime. Let\/ $L/K$ be a Galois CM-extension 
	with Galois group\/ $G \simeq D_{4 \ell}$. Let\/ $p$ be a prime and 
	suppose that\/ $p$ does not split in\/ $\QQ(\zeta_{\ell})$. Then,
	$BS(L/K,S,p)$ and\/ $B(L/K,S,p)$ both hold for every finite set\/ $S$ 
	of places of\/ $K$ such that\/ $ S_{\ram} \cup S_{\infty} \subseteq S$. \qed
\end{theorem}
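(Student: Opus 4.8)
The plan is to reduce to the strong Brumer--Stark property and then exploit the near-abelian structure of $D_{4\ell}$. By Proposition~\ref{c7-prop:SBS-BS}, Lemma~\ref{c7-lem:BS-implies-B} and the localisation remarks it suffices to prove $SBS(L/K,S,p)$ for every prime $p$, that is,
\[
\mathcal{H}_p(G) \cdot \tfrac12 \cdot \theta_S^T \subseteq \Ann_{\zeta(\ZZ_p[G])}(\cl_L^T(p))
\]
for all finite $T$ with $\hyp(S,T)$. One uses throughout the isomorphism $D_{4\ell} \simeq D_{2\ell} \times C_2$ (valid since $\ell$ is odd), the monomiality of dihedral groups, the decomposition $\QQ[D_{2\ell}] \simeq \QQ \times \QQ \times M_2(\QQ(\zeta_\ell)^+)$ — so that the linear characters of $G$ factor through $G/[G,G] \simeq C_2 \times C_2$ and the two-dimensional ones are induced from the cyclic subgroup $C_{2\ell} = [G,G] \times \langle j \rangle$ — and the unconditional instance of Conjecture~\ref{c7-conj:integrality} for $D_{4\ell}$-extensions recorded above. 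The hypothesis that $p$ does not split in $\QQ(\zeta_\ell)$ enters by forcing the completion $\ZZ_p \otimes \ZZ[\zeta_\ell]$ to be a (possibly ramified) discrete valuation ring, so the only non-commutative Wedderburn constituent of $\QQ_p[D_{2\ell}]$ is a matrix ring over a local principal ideal domain and the module- and Fitting-ideal bookkeeping on it is elementary.

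The principal case is $p$ odd with $p \neq \ell$: here $p \nmid |G| = 4\ell$, so $\mathcal{H}_p(G) = \mathcal{I}_p(G) = \zeta(\ZZ_p[G])$ by Proposition~\ref{c7-prop:best-denominators} and $\tfrac12\theta_S^T \in \zeta(\ZZ_p[G])$. Since $\Gal(L^+/K) \simeq D_{2\ell}$ has only monomial irreducible characters, Conjecture~\ref{c7-conj:int-property} holds for all characters of $\Gal(L(\zeta_p)_\infty^+/K)$ factoring through $\Gal(L^+/K)$; and since $p \nmid |G|$, the equivariant Iwasawa main conjecture for $L(\zeta_p)_\infty^+/K$ holds by the theory of hybrid Iwasawa algebras. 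Theorem~\ref{c7-thm:Brumer-Stark} then gives $SBS(L/K,S,p)$ for all $S \supseteq S_p \cup S_{\ram} \cup S_\infty$. For $p = 2$ one still has $\mathcal{H}_2(G) = \mathcal{I}_2(G) = \zeta(\ZZ_2[G])$ because $2 \nmid \ell = |[G,G]|$, but Theorem~\ref{c7-thm:Brumer-Stark} does not apply at $p = 2$; here the non-split hypothesis makes $\ZZ_2[D_{2\ell}] \cong \ZZ_2[C_2] \times M_2(\mathcal{O})$ with $\mathcal{O}$ unramified over $\ZZ_2$, and I would reduce block by block — via Morita equivalence and restriction of Stickelberger elements — to the abelian Brumer--Stark statement for the biquadratic subextension $L^{[G,G]}/K$ and the cyclic degree-$2\ell$ subextension $L/L^{C_{2\ell}}$, invoking the known instances of the abelian conjecture (in particular Stickelberger's theorem when $K = \QQ$). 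The case $p = \ell$, where the non-split hypothesis is automatic, is analogous except that $\mathcal{H}_\ell(G) = \mathcal{F}_\ell(G) \subsetneq \zeta(\ZZ_\ell[G])$ and $\ZZ_\ell[D_{2\ell}]$ is no longer hybrid: one proves the analogue of the displayed inclusion over the maximal order $\zeta(\mathfrak{M}_\ell(G))$ by Morita-reducing the two-dimensional blocks to the degree-$\ell$ piece, and then transports it into $\ZZ_\ell[G]$ via the conductor formula~\eqref{c7-eqn:conductor-formula}, which gives $\mathcal{F}_\ell(G) \cdot \zeta(\mathfrak{M}_\ell(G)) \subseteq \zeta(\ZZ_\ell[G])$.

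It remains to remove the auxiliary hypothesis $S_p \subseteq S$, so as to obtain $SBS(L/K,S,p)$ for an arbitrary $S \supseteq S_{\ram} \cup S_\infty$, and this I expect to be the main obstacle. Writing $S' := S \cup S_p$, the element $\theta_{S'}$ differs from $\theta_S$ by the reduced norms of the Euler factors $1 - \phi_w^{-1}$ at the (necessarily unramified) $p$-adic places, and one must show that deleting these factors preserves the annihilation. On the linear blocks this is the comparison between the complex and $p$-adic Euler factors at $p$ supplied by the interpolation property; on the two-dimensional blocks it reduces, through the same Morita equivalence and the fact that $\ZZ_p \otimes \ZZ[\zeta_\ell]$ is a discrete valuation ring, to the corresponding $S_p$-removal for the monomial characters of the cyclic subextension — and here the non-split hypothesis is essential, since it keeps every local Euler factor at $p$ inside a single local ring whose effect on the relevant Fitting ideal can be computed exactly. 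Combining the three $p$-adic cases with this $S_p$-removal yields $SBS(L/K,S,p)$ for all $p$, hence $BS(L/K,S)$ and $B(L/K,S)$ prime by prime, which is the assertion.
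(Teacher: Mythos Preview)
The paper does not prove this theorem itself; it cites Nomura \cite{c7-No14a, c7-No14b}, whose argument proceeds by a direct analysis of the Stickelberger element under induction from the cyclic subgroup of index two, together with explicit control of the non-linear Wedderburn block made possible by the non-split hypothesis. Your sketch has the right intuition in places but contains two genuine gaps that prevent it from being a proof.

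First, your treatment of the primes $p=2$ and $p=\ell$ reduces, via Morita equivalence, to the \emph{abelian} Brumer--Stark conjecture for the biquadratic subextension $L^{[G,G]}/K$ and the cyclic subextension $L/L^{C_{2\ell}}$. But the abelian Brumer--Stark conjecture is not known unconditionally for arbitrary abelian CM-extensions over a general totally real base $K$; Stickelberger's theorem covers only $K=\QQ$, and the other unconditional results in the paper (Theorems~\ref{c7-thm:weak-results}, \ref{c7-thm:Brumer-Stark}, \ref{c7-thm:unconditional-BS}) all carry hypotheses you have not verified for these subextensions. Your parenthetical ``in particular Stickelberger's theorem when $K=\QQ$'' is exactly the admission of this gap: the theorem you are trying to prove has no restriction on $K$. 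Morita equivalence transfers module categories, but it does not manufacture the required input---annihilation of the class group of the abelian subextension by its own Stickelberger element---out of nothing.

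Second, the removal of the condition $S_p \subseteq S$ is not a formality, and your sketch does not carry it out. Passing from $\theta_{S'}^T$ to $\theta_S^T$ divides by the reduced norms of the Euler factors $1-\phi_w^{-1}$ at the $p$-adic places; showing that the resulting element still lies in the annihilator of $\cl_L^T(p)$ requires a genuine comparison of Fitting ideals (or of the Ritter--Weiss modules, in Nomura's language) before and after deleting these factors. The non-split hypothesis is exactly what makes this comparison tractable on the two-dimensional block---it forces $\ZZ_p[\zeta_\ell+\zeta_\ell^{-1}]$ to be a DVR so that the relevant Fitting ideal is principal and the Euler factor can be cancelled---but you have only asserted this, not executed it. In Nomura's argument this step is the technical heart and occupies most of the work; it cannot be absorbed into a sentence about ``computing exactly''.
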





\end{document}